\def\namedlabel#1#2{\begingroup
    #2%
    \def\@currentlabel{#2}%
    \phantomsection\label{#1}\endgroup
}
\numberwithin{equation}{section}
\newcommand{\Z}{\mathbb Z}
\newcommand{\R}{\mathbb R}
\newcommand{\C}{\mathbb C}
\newcommand{\vt}{\widetilde{v}}
\newcommand{\ri}{\mathrm{i}}
\newcommand{\re}{\mathrm{e}}
\newcommand{\de}{\mathrm{d}}
\newcommand{\El}{\mathcal{L}}
\newcommand{\xx}{{\zeta}}
\newcommand{\xt}{{\bar{\zeta}}}
\newcommand{\NT}{\widetilde{\mathcal{N}}}
\newcommand{\Non}{\mathcal{N}}
\newtheorem{theorem}{Theorem}[section]
\newtheorem{proposition}[theorem]{Proposition}
\newtheorem{corollary}[theorem]{Corollary}
\newtheorem{lemma}[theorem]{Lemma}
\newtheorem{remark}[theorem]{Remark}
\theoremstyle{definition}
\title{Nonlinear stability and asymptotic behavior of periodic wave trains in reaction-diffusion systems against $C_{\mathrm{ub}}$-perturbations}
\author{Bj\"orn de Rijk\thanks{Department of Mathematics, Karlsruhe Institute of Technology, Englerstra\ss e 2, 76131 Karlsruhe, Germany; \texttt{bjoern.de-rijk@kit.edu}}}
\date{\today}
\begin{document}

\maketitle
\begin{abstract}
We present a nonlinear stability theory for periodic wave trains in reaction-diffusion systems, which relies on pure $L^\infty$-estimates only. Our analysis shows that localization or periodicity requirements on perturbations, as present in the current literature, can be completely lifted. Inspired by previous works considering localized perturbations, we decompose the semigroup generated by the linearization about the wave train and introduce a spatio-temporal phase modulation to capture the most critical dynamics, which is governed by a viscous Burgers' equation. We then aim to close a nonlinear stability argument by iterative estimates on the corresponding Duhamel formulation, where, hampered by the lack of localization, we must rely on diffusive smoothing to render decay of the semigroup. Yet, this decay is not strong enough to control all terms in the Duhamel formulation. We address this difficulty by applying the Cole-Hopf transform to eliminate the critical Burgers'-type nonlinearities. Ultimately, we establish nonlinear stability of diffusively spectrally stable wave trains against $C_{\mathrm{ub}}$-perturbations. Moreover, we show that the perturbed solution converges to a modulated wave train, whose phase and wavenumber are approximated by solutions to the associated viscous Hamilton-Jacobi and Burgers' equation, respectively.
\bigskip\\
\textbf{Keywords.} Reaction-diffusion systems; periodic waves; nonlinear stability; nonlocalized perturbations; Cole-Hopf transform; Hamilton-Jacobi equation; Burgers' equation\\
\textbf{Mathematics Subject Classification (2020).} 35B10; 35B35; 35K57; 35B40 % 35F21;
% periodic, stability, reaction-diffusion; asymptotic behavior, Hamilton-Jacobi
\end{abstract}

\section{Introduction}

A paradigmatic class of pattern-forming systems, which are characterized by their rich dynamics, yet have a simple form, is the class of reaction-diffusion systems
\begin{equation}\label{RD}
\partial_t u = Du_{xx} + f(u), \qquad x \in\R,\, t \geq 0,\, u\in\R^n,
\end{equation}
where $n \in \mathbb N$, $D \in \R^{n \times n}$ is a symmetric, positive-definite matrix, and $f \colon \R^n \to \R^n$ is a smooth nonlinearity. The well-known Turing mechanism, describing pattern-forming processes in developmental biology, fluid mechanics, materials science and more~\cite{CrossHohenberg}, asserts that periodic traveling waves, or \emph{wave trains}, are typically the first patterns to arise after a homogeneous background state in~\eqref{RD} becomes unstable, and serve as the building blocks for more complicated structures. Wave trains are solutions to~\eqref{RD} of the form $u_0(x,t) = \phi_0(k_0 x -\omega_0 t)$ with wavenumber $k_0 \in \R \setminus \{0\}$, temporal frequency $\omega_0 \in \R$, propagation speed $c = \omega_0/k_0$ and $1$-periodic profile function $\phi_0(\zeta)$. Thus, by switching to the comoving frame $\zeta = k_0x-\omega_0 t$, we find that $\phi_0$ is a stationary solution to 
\begin{equation}\label{RD2}
\partial_t u = k_0^2Du_{\zeta\zeta} + \omega_0 u_\zeta + f(u).
\end{equation}

Despite the structural importance and apparent simplicity of wave trains, their nonlinear stability on spatially extended domains has proven to be challenging. The main obstruction is that the linearization of~\eqref{RD2} about the wave train is a periodic differential operator with continuous spectrum touching the imaginary axis at the origin due to translational invariance. The absence of a spectral gap prohibits exponential convergence of the perturbed solution towards a translate of the original profile, as in the case of finite domains with periodic boundary conditions. Instead, one expects diffusive behavior on the linear level, so that only \emph{algebraic} decay of the associated semigroup can be realized by giving up localization or by exploiting its smoothing action, see Remark~\ref{rem:lindecay}. In particular, stability could strongly depend on the nonlinearity and the selected space of perturbations, see Remark~\ref{rem:nonlheat}.

The 30-year open problem of nonlinear stability of wave trains on spatially extended domains was first resolved in~\cite{CEE92} for the real Ginzburg-Landau equation considering localized perturbations. The extension to pattern-forming systems without gauge symmetry, such as~\eqref{RD}, resisted many attempts, until it was tackled for the Swift-Hohenberg equation in~\cite{SCH} by incorporating mode filters in Bloch frequency domain, see also~\cite{SCHN}. Yet, a breakthrough in the understanding of perturbed wave trains in reaction-diffusion systems~\eqref{RD} was the introduction of a spatio-temporal \emph{phase modulation} in~\cite{DSSS}. Instead of controlling the difference $u(\xx,t) - \phi_0(\xx)$ between the perturbed solution $u(\xx,t)$ to~\eqref{RD2} and the wave train $\phi_0(\xx)$, one considers the \emph{modulated perturbation}
\begin{align} v(\zeta,t) = u(\xx - \gamma(\xx,t),t) - \phi_0(\xx), \label{e:defv}\end{align}
and aims to capture the most critical dynamics, which arises due to translational invariance of the wave train, by the phase modulation $\gamma(\xx,t)$. It is shown in~\cite{DSSS} that the leading-order dynamics of $\gamma(\xx,t)$ is described by the viscous Hamilton-Jacobi equation
\begin{align}
\partial_t \breve\gamma = d\breve\gamma_{\xx\xx} + a \breve\gamma_{\xx} + \nu \breve\gamma_\zeta^2, \label{e:HamJac}
\end{align}
whose parameters $d > 0$, $a, \nu \in \R$ can be determined explicitly in terms of linear and nonlinear dispersion relations, see~\S\ref{sec:statement} for details. The modulational ansatz~\eqref{e:defv} led to fast developments in the nonlinear stability theory of wave trains against localized perturbations. Using a rich blend of methods such as the renormalization group approach~\cite{IYSA,SAN3}, iterative $L^1$-$H^k$-estimates~\cite{JONZNL,JONZW,JONZ} or pointwise estimates~\cite{RS21,JUN,JUNNL}, one was able to each time streamline, sharpen and extend previous results, e.g.~by deriving the asymptotic behavior of the perturbation in varying degrees of detail, or by allowing for a large, or nonlocalized, initial phase modulation, see~\S\ref{sec:modnonlocal}-\S\ref{sec:approx}. 

These works confirm that wave trains in reaction-diffusion systems are more robust than one would initially expect from the fact that the linearization possesses continuous spectrum up to the imaginary axis, see Remark~\ref{rem:nonlheat}. This is strongly related to the fact that the critical phase dynamics is governed by the viscous Hamilton-Jacobi equation~\eqref{e:HamJac}, which reduces to a linear heat equation after applying the Cole-Hopf transform. This begs the question of whether extension to larger classes of perturbations is possible and, in particular, whether localization conditions on perturbations can be completely lifted; after all, bounded solutions to~\eqref{e:HamJac} can be controlled using well-known tools such as the maximum principle or the Cole-Hopf transform.

In this paper, we answer this question in the affirmative, shedding new light on the persistence and resilience of periodic structures in reaction-diffusion systems. Here, we take inspiration from a recent study~\cite{HDRS22}, establishing nonlinear stability of periodic roll waves in the real Ginzburg-Landau equation in a pure $L^\infty$-framework. Despite the slower decay rates due to the loss of localization, the nonlinear argument can be closed in~\cite{HDRS22} relying on diffusive smoothing only. Yet, the real Ginzburg-Landau equation is a special equation for which the analysis simplifies significantly due to its gauge symmetry and for which the most critical nonlinear terms do not appear due to its reflection symmetry. Therefore, the analysis in this paper requires fundamentally new ideas. Before explaining these in~\S\ref{sec:outlineproof} below, we first formulate our main result in~\S\ref{sec:statement}.

\begin{remark} \label{rem:lindecay}
{\upshape
The heat semigroup $\smash{\re^{\partial_x^2 t}}$ is only bounded as an operator acting on $L^p(\R)$ for $1 \leq p \leq \infty$. Nevertheless, algebraic decay can be obtained by giving up localization, or by exploiting its smoothing action. Indeed, $\smash{\re^{\partial_x^2 t}}$ decays at rate $\smash{t^{-\frac{1}{2} + \frac{1}{2p}}}$ as an operator from $L^1(\R)$ into $L^p(\R)$, and $\partial_x \smash{\re^{\partial_x^2 t}}$ decays on $L^p(\R)$ at rate $\smash{t^{-\frac{1}{2}}}$. In a nonlinear argument the lost localization can be regained through the nonlinearity, e.g.~if a function $u$ is $L^2$-localized then its square $u^2$ is $L^1$-localized. Moreover, if the nonlinearity admits derivatives, they can often be moved onto the semigroup in the Duhamel formulation facilitating decay due to diffusive smoothing. We refer to~\cite[Section~14.1.3]{SU17book} and~\cite[Section~2]{HDRS22} for simple examples illustrating how a nonlinear argument can be closed through these principles. 
}\end{remark}

\begin{remark} \label{rem:nonlheat}
{\upshape
The stability of solutions to reaction-diffusion systems could heavily depend on the nonlinearity and the selected space of perturbations in case the linearization possesses continuous spectrum up to the imaginary axis. To illustrate the latter, we consider the scalar heat equation
\begin{align*} \partial_t u = u_{xx} + g(u,u_x),\end{align*}
with smooth nonlinearity $g \colon \R^2 \to \R$. For $g(u,u_x) = u^4$ one proves with the aid of localization-induced decay, cf.~\cite[Section~14.1.3]{SU17book}, that the rest state $u = 0$ is stable against perturbations from $L^1(\R) \cap L^\infty(\R)$, whereas the comparison principle shows that $u = 0$ is unstable against perturbations from $L^\infty(\R)$ (they blow up in finite time). On the other hand, taking $g(u,u_x) = u^2$ the rest state is even unstable against perturbations from $L^1(\R) \cap L^\infty(\R)$, cf.~\cite{FUJI}, whereas for $g(u,u_x) = u_x^3$ its stability against perturbations from $L^\infty(\R)$ can be shown with the aid of diffusive smoothing~\cite[Section~2]{HDRS22}.
}\end{remark}

\subsection{Statement of main result} \label{sec:statement}

In this paper we establish the nonlinear stability and describe the asymptotic behavior of wave-train solutions to~\eqref{RD} subject to bounded and uniformly continuous perturbations.

We formulate the hypotheses for our main result. First, we assume the existence of a wave train:
\begin{itemize}
\item[\namedlabel{assH1}{\upshape (H1)}] There exist a wavenumber $k_0 \in \R \setminus \{0\}$ and a temporal frequency $\omega_0 \in \R$ such that~\eqref{RD} admits a wave-train solution $u_0(x,t)=\phi_0(k_0x - \omega_0 t)$, where the profile function $\phi_0 \colon \R \to \R^n$ is nonconstant, smooth and $1$-periodic.
\end{itemize}

Next, we pose spectral stability assumptions on the wave train $u_0(x,t)$. Linearizing~\eqref{RD2} about its stationary solution $\phi_0$, we obtain the $1$-periodic differential operator
\begin{align*}
\El_0 u = k_0^2Du_{\zeta\zeta} + \omega_0 u_\zeta + f'(\phi_0(\zeta))u,
\end{align*}
acting on $C_{\mathrm{ub}}(\R)$ with domain $D(\El_0) = C_{\mathrm{ub}}^2(\R)$, where $C_{\mathrm{ub}}^m(\R)$, $m \in \mathbb{N}_0$, denotes the space of bounded and uniformly continuous functions, which are $m$ times differentiable and whose $m$ derivatives are also bounded and uniformly continuous. We equip $C_{\mathrm{ub}}^m(\R)$ with the standard $\smash{W^{m,\infty}}$-norm, so that it is a Banach space. An important reason to consider bounded and uniformly continuous perturbations, instead of perturbations in $L^\infty(\R)$, is that the operator $\El_0$ is densely defined on the space $C_{\mathrm{ub}}(\R)$, but not on $L^\infty(\R)$, see~\cite[Theorem~3.1.7 and Corollary~3.1.9]{LUN}. We refer to Remark~\ref{rem:linfty} for further details.

Applying the Floquet-Bloch transform to $\El_0$ yields the family of operators
\begin{align*}
\El(\xi) u = k_0^2D\left(\partial_\zeta + \ri \xi\right)^2 u + \omega_0 \left(\partial_\zeta + \ri \xi\right) u + f'(\phi_0(\zeta))u,
\end{align*}
posed on $L_{\mathrm{per}}^2(0,1)$ with domain $D(\El(\xi)) = H_{\mathrm{per}}^2(0,1)$ parameterized by the Bloch frequency variable $\xi \in [-\pi,\pi)$. It is well-known that the spectrum decomposes as
$$\sigma(\El_0) = \bigcup_{\xi \in [-\pi,\pi)} \sigma(\El(\xi)).$$
Here, we note that the Bloch operator $\El(\xi)$ has compact resolvent, and thus discrete spectrum, for each $\xi \in [-\pi,\pi)$. The conditions for \emph{diffusive spectral stability} can now be formulated as follows:
\begin{itemize}
\setlength\itemsep{0em}
\item[\namedlabel{assD1}{\upshape (D1)}] It holds $\sigma(\El_0)\subset\{\lambda\in\C:\Re(\lambda)<0\}\cup\{0\}$;
\item[\namedlabel{assD2}{\upshape (D2)}] There exists $\theta>0$ such that for any $\xi\in[-\pi,\pi)$ we have $\Re\,\sigma(\El(\xi))\leq-\theta \xi^2$;
\item[\namedlabel{assD3}{\upshape (D3)}] $0$ is a simple eigenvalue of $\El(0)$.
\end{itemize}
We emphasize that these diffusive spectral stability conditions, which were first introduced in~\cite{SCH}, are standard in all nonlinear stability analyses of periodic wave trains in reaction-diffusion systems, cf.~\cite{IYSA,JONZNL,JONZW,JONZ,JUN,JUNNL,SAN3,SCHN,SCH,SCHT}. Recalling that the spectrum of $\El_0$ touches the origin due to translational invariance, they resemble the most stable nondegenerate spectral configuration. Examples of reaction-diffusion systems, in which diffusively spectrally stable wave trains have been shown to exist, include the complex Ginzburg-Landau equation~\cite{vanH}, the Gierer-Meinhardt system~\cite{PLO} and the Brusselator model~\cite{SUKH}. Typically, spectral stability analyses of wave trains rely on perturbative arguments, which for instance exploit that the wave trains are constructed close to a homogeneous rest state undergoing a Turing bifurcation~\cite{MIEL,SCHN}, employ the stability of a nearby traveling pulse solution~\cite{SAS} or take advantage of the slow-fast structure of the system~\cite{BDR2}.

By translational invariance and Hypothesis~\ref{assD3} the kernel of the Bloch operator $\El(0)$ is spanned by the derivative $\phi_0' \in H^2_{\mathrm{per}}(0,1)$ of the wave train. Thus, $0$ must also be a simple eigenvalue of its adjoint $\El(0)^*$. We denote by $\smash{\widetilde{\Phi}_0} \in H^2_{\mathrm{per}}(0,1)$ the corresponding eigenfunction satisfying 
\begin{align} \label{e:adjoint} \big\langle \widetilde{\Phi}_0,\phi_0'\big\rangle_{L^2(0,1)} = 1.\end{align}

It is a direct consequence of the implicit function theorem that $\phi_0$ is part of a family $u_k(x,t) = \phi(k x - \omega(k) t;k)$ of smooth $1$-periodic wave-trains solutions to~\eqref{RD} for an open range of wavenumbers $k$ around $k_0$, such that $\omega(k_0) = \omega_0$ and $\phi(\cdot;k_0) = \phi_0$, cf.~Proposition~\ref{prop:family}. The function $\omega(k)$ is the so-called \emph{nonlinear dispersion relation}, describing the dependency of the frequency on the wavenumber. Note that by translational invariance we can always arrange for
\begin{align} \label{e:gauge} \big\langle \widetilde{\Phi}_0,\partial_k \phi(\cdot,k_0)\big\rangle_{L^2(0,1)} = 0,\end{align}
cf.~\cite[Section~4.2]{DSSS}. The coefficients $d>0$ and $a,\nu \in \R$ of the viscous Hamilton-Jacobi equation~\eqref{e:HamJac}, describing the leading-order phase dynamics, can now be expressed as
\begin{align} \label{e:defad}
 a = \omega_0 - k_0\omega'(k_0), \qquad d = k_0^2\big\langle \widetilde{\Phi}_0,D \phi_0' + 2k_0D \partial_{\zeta k} \phi(\cdot;k_0)\big\rangle_{L^2(0,1)}, \qquad \nu = -\frac{1}{2}k_0^2 \omega''(k_0). 
\end{align}

We are in the position to state our main result, which establishes nonlinear stability of diffusively spectrally stable wave trains against $C_{\mathrm{ub}}$-perturbations and yields convergence of the modulated perturbed solution towards the wave train, where the modulation is approximated by a solution to the viscous Hamilton-Jacobi equation~\eqref{e:HamJac}.

\begin{theorem} \label{t:mainresult}
Assume~\ref{assH1} and~\ref{assD1}-\ref{assD3}. Then, there exist constants $\epsilon, M > 0$ such that, whenever $v_0 \in C_{\mathrm{ub}}(\R)$ satisfies
\[
E_0:=\left\|v_0\right\|_\infty <\epsilon,
\]
there exist a scalar function $\gamma \in C^\infty\big([0,\infty) 
\times \R,\R\big)$ with $\gamma(0) = 0$ and $\gamma(t) \in C_{\mathrm{ub}}^m(\R)$ for each $m \in \mathbb{N}_0$ and $t \geq 0$, and a unique classical global solution 
\begin{align} \label{regu}
 u \in X := C\big([0,\infty),C_{\mathrm{ub}}(\R)\big) \cap C\big((0,\infty),C_{\mathrm{ub}}^2(\R)\big) \cap C^1\big((0,\infty),C_{\mathrm{ub}}(\R)\big),
\end{align}
to~\eqref{RD2} with initial condition $u(0)=\phi_0 + v_0$, which enjoy the estimates
\begin{align} 
\label{e:mtest10}
\left\|u(t)-\phi_0\right\|_\infty + \frac{\sqrt{t}}{\sqrt{1+t}}\|u_\xx(t)-\phi_0'\|_{\infty} &\leq ME_0, \\
\label{e:mtest11}
\left\|u(\cdot-\gamma(\cdot,t),t)-\phi_0\right\|_\infty &\leq \frac{ME_0}{\sqrt{1+t}},\\
\label{e:mtest12}
\left\|u(\cdot-\gamma(\cdot,t),t)-\phi(\cdot\,;k_0(1+\gamma_\xx(\cdot,t))\right\|_\infty &\leq \frac{ME_0 \log(2+t)}{1+t},
\end{align}
and
\begin{align}  \label{e:mtest2}
\|\gamma(t)\|_\infty \leq ME_0, \qquad \left\|\gamma_\xx(t)\right\|_\infty, \|\partial_t \gamma(t)\|_\infty 
\leq \frac{ME_0}{\sqrt{1+t}}, \qquad \left\|\gamma_{\xx\xx}(t)\right\|_\infty \leq \frac{ME_0 \log(2+t)}{1+t},
\end{align}
for all $t \geq 0$. Moreover, there exists a unique classical global solution $\breve{\gamma} \in X$ with initial condition $\breve{\gamma}(0) = \widetilde{\Phi}_0^*v_0$ to the viscous Hamilton-Jacobi equation~\eqref{e:HamJac}, with coefficients~\eqref{e:defad}, such that we have the approximation
\begin{align}
\label{e:mtest3}
t^{\frac{j}{2}}\left\|\partial_\xx^j \left(\gamma(t) - \breve{\gamma}(t)\right)\right\|_\infty \leq M \left(E_0^2 + \frac{E_0 \log(2+t)}{\sqrt{1+t}}\right),
\end{align}
for $j = 0,1$ and $t \geq 0$.
\end{theorem}

Theorem~\ref{t:mainresult} establishes Lyapunov stability of the wave train $\phi_0$ as a solution to~\eqref{RD2} in $C_{\mathrm{ub}}(\R)$, cf.~estimate~\eqref{e:mtest10}. Naturally, asymptotic stability cannot be expected due to translational invariance of the wave train and the fact that any sufficiently small translate is a $C_{\mathrm{ub}}$-perturbation. In fact, the temporal decay rates presented in Theorem~\ref{t:mainresult} are sharp (up to possibly a logarithm), see~\S\ref{sec:optimal} for details. Yet, Theorem~\ref{t:mainresult} does imply asymptotic convergence of the perturbed solution $u(t)$ towards a \emph{modulated} wave train.

\begin{corollary} \label{cor:psit}
Assume~\ref{assH1} and~\ref{assD1}-\ref{assD3}. Then, there exist constants $\epsilon, M > 0$ such that, whenever $v_0 \in C_{\mathrm{ub}}(\R)$ satisfies $E_0:=\left\|v_0\right\|_\infty <\epsilon$, the solution $u(\xx,t)$ to~\eqref{RD2} and the phase function $\gamma(\xx,t)$, both established in Theorem~\ref{t:mainresult}, satisfy
\begin{align} 
\begin{split}
\left\|u\left(\cdot,t\right)-\phi_0\left(\cdot + \gamma(\cdot,t)\right)\right\|_\infty &\leq \frac{M E_0}{\sqrt{1+t}},\\
\left\|u\left(\cdot,t\right)-\phi\left(\cdot + \gamma(\cdot,t)\left(1+\gamma_\xx(\cdot,t)\right);k_0\left(1+\gamma_\xx(\cdot,t)\right)\right)\right\|_\infty &\leq \frac{M E_0 \log(2+t)}{1+t},
\end{split} 
\label{e:mtest6}\end{align}
for $t \geq 0$.
\end{corollary}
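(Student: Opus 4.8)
The plan is to obtain Corollary~\ref{cor:psit} directly from Theorem~\ref{t:mainresult} by \emph{undoing} the spatio-temporal modulation, i.e.\ by converting the ``modulated'' bounds \eqref{e:mtest11}--\eqref{e:mtest12} on $u(\cdot-\gamma(\cdot,t),t)$ into bounds on $u(\cdot,t)$ itself via the change of variables inverse to $\xx\mapsto\xx-\gamma(\xx,t)$. Fix $t\ge0$ and set $g_t(\xx):=\xx-\gamma(\xx,t)$. By \eqref{e:mtest2} one has $\|\gamma(t)\|_\infty\le ME_0$ and $\|\gamma_\xx(t)\|_\infty\le ME_0$, so after shrinking $\epsilon$ so that $M\epsilon<\tfrac12$ the derivative $g_t'=1-\gamma_\xx(\cdot,t)$ is bounded below by $\tfrac12$; since in addition $g_t(\xx)\to\pm\infty$ as $\xx\to\pm\infty$, the map $g_t\colon\R\to\R$ is an orientation-preserving $C^2$-diffeomorphism. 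Writing its inverse as $h_t(\xx)=\xx+\psi(\xx,t)$ with $\psi(\cdot,t)\in C_{\mathrm{ub}}^2(\R)$, the identity $g_t\circ h_t=\mathrm{id}$ becomes the fixed-point relation $\psi(\xx,t)=\gamma(\xx+\psi(\xx,t),t)$, from which $\|\psi(t)\|_\infty\le\|\gamma(t)\|_\infty\le ME_0$ and, differentiating, $\|\psi_\xx(t)\|_\infty\le 2\|\gamma_\xx(t)\|_\infty$.

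The key observation is that $u(\xx,t)=\big(u(\cdot-\gamma(\cdot,t),t)\big)\circ h_t$ evaluated at $\xx$, because $g_t(h_t(\xx))=\xx$; hence for any continuous profile $w$,
\[
\big\|u(\cdot,t)-w(h_t(\cdot))\big\|_\infty=\big\|\big(u(\cdot-\gamma(\cdot,t),t)-w\big)\circ h_t\big\|_\infty=\big\|u(\cdot-\gamma(\cdot,t),t)-w\big\|_\infty ,
\]
since $h_t$ is a bijection of $\R$. Taking $w=\phi_0$ and $w=\phi(\cdot;k_0(1+\gamma_\xx(\cdot,t)))$, the estimates \eqref{e:mtest11} and \eqref{e:mtest12} turn into
\begin{align*}
\big\|u(\cdot,t)-\phi_0(\cdot+\psi(\cdot,t))\big\|_\infty&\le\frac{ME_0}{\sqrt{1+t}},\\
\big\|u(\cdot,t)-\phi\big(\cdot+\psi(\cdot,t);k_0(1+\gamma_\xx(\cdot+\psi(\cdot,t),t))\big)\big\|_\infty&\le\frac{ME_0\log(2+t)}{1+t}.
\end{align*}
It then only remains to replace, up to the claimed rates, $\psi(\cdot,t)$ by $\gamma(\cdot,t)$ in the first line and by $\gamma(\cdot,t)(1+\gamma_\xx(\cdot,t))$ in the second, and $\gamma_\xx(\cdot+\psi(\cdot,t),t)$ by $\gamma_\xx(\cdot,t)$.

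For the first replacement, $\psi(\xx,t)-\gamma(\xx,t)=\gamma(\xx+\psi(\xx,t),t)-\gamma(\xx,t)$ gives $\|\psi(t)-\gamma(t)\|_\infty\le\|\gamma_\xx(t)\|_\infty\|\psi(t)\|_\infty\le M^2E_0^2/\sqrt{1+t}$, so, since $\phi_0$ is globally Lipschitz, the triangle inequality and $E_0^2\le\epsilon E_0$ yield the first estimate of \eqref{e:mtest6}. For the second, a second-order Taylor expansion of the fixed-point relation gives $\psi=\gamma+\gamma_\xx\psi+r$, with $\gamma,\gamma_\xx$ evaluated at $\xx$ and $\|r(t)\|_\infty\le\tfrac12\|\gamma_{\xx\xx}(t)\|_\infty\|\psi(t)\|_\infty^2$; solving, $\psi-\gamma(1+\gamma_\xx)=(1-\gamma_\xx)^{-1}\big(\gamma\gamma_\xx^2+r\big)$, whence by the rates in \eqref{e:mtest2}
\[
\big\|\psi(t)-\gamma(\cdot,t)(1+\gamma_\xx(\cdot,t))\big\|_\infty\le\frac{CE_0^3\log(2+t)}{1+t}\le\frac{CE_0\log(2+t)}{1+t},
\]
and similarly $\|\gamma_\xx(\cdot+\psi(\cdot,t),t)-\gamma_\xx(\cdot,t)\|_\infty\le\|\gamma_{\xx\xx}(t)\|_\infty\|\psi(t)\|_\infty\le M^2E_0^2\log(2+t)/(1+t)$. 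Since, by Proposition~\ref{prop:family}, $\phi(\cdot;k)$ is smooth in $(\zeta,k)$ and $1$-periodic in $\zeta$, the derivatives $\partial_\zeta\phi$ and $\partial_k\phi$ are bounded for $k$ ranging over a fixed neighbourhood of $k_0$, which contains both $k_0(1+\gamma_\xx(\xx,t))$ and $k_0(1+\gamma_\xx(\xx+\psi(\xx,t),t))$ once $\epsilon$ is small; interpolating through $\phi(\cdot+\psi(\cdot,t);k_0(1+\gamma_\xx(\cdot,t)))$, the triangle inequality converts the second display into the second estimate of \eqref{e:mtest6}.

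I do not expect a genuine obstacle here beyond bookkeeping, since every error term is dominated through the a~priori bounds of Theorem~\ref{t:mainresult}; the one point requiring care is the nonlinear change of variables --- verifying invertibility of $\xx\mapsto\xx-\gamma(\xx,t)$ uniformly in $t$ and, in particular, extracting the correct leading term $\gamma(1+\gamma_\xx)$ of the inverse phase $\psi$, which is exactly what makes the sharper, logarithmically corrected bound in \eqref{e:mtest6} come out with the wavenumber modulation $k_0(1+\gamma_\xx)$ and the spatial shift $\gamma(1+\gamma_\xx)$ evaluated at $\xx$ rather than at the modulated point.
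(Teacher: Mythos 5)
Your proposal is correct and follows essentially the same route as the paper: invert the modulation map $\xx\mapsto\xx-\gamma(\xx,t)$, substitute the inverse into the modulated estimates~\eqref{e:mtest11}--\eqref{e:mtest12}, and expand the inverse phase via its fixed-point relation (Taylor's theorem) to identify the leading term $\gamma(1+\gamma_\xx)$ and control the errors with~\eqref{e:mtest2}. The only cosmetic difference is that you solve the expanded fixed-point relation explicitly for $\psi$, while the paper rearranges the same identity directly into the error bounds~\eqref{e:mtest61}--\eqref{e:mtest62}.
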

\begin{proof}
Take $E_0 > 0$ so small that estimate~\eqref{e:mtest2} implies that $\|\gamma_\xx(t)\|_\infty \leq1$. Then, the map $\psi_t \colon \R \to \R$ given by $\psi_t(\xx) = \xx - \gamma(\xx,t)$ is invertible for each $t \geq 0$. We rewrite $\psi_t(\psi_t^{-1}(\xx)) = \xx$ as $\psi_t^{-1}(\xx) = \xx + \gamma(\psi_t^{-1}(\xx),t)$ to obtain
\begin{align*}\psi_t^{-1}(\xx) - \xx - \gamma(\xx,t)\left(1+\gamma_\xx(\xx,t)\right) &= \gamma_\xx(\xx,t) \left(\gamma\left(\xx+\gamma\left(\psi_t^{-1}(\xx),t\right),t\right) - \gamma(\xx,t)\right)\\ &\qquad + \gamma\left(\xx+\gamma\left(\psi_t^{-1}(\xx),t\right),t\right) - \gamma(\xx,t) - \gamma_\xx(\xx,t) \gamma\left(\psi_t^{-1}(\xx),t\right).\end{align*}
Next, we apply Taylor's theorem to the latter and find
\begin{align}
\sup_{\xx \in \R} \left|\psi_t^{-1}(\xx) - \xx - \gamma(\xx,t)\left(1 + \gamma_\xx(\xx,t)\right)\right| \leq \left(\left\|\gamma_\xx(t)\right\|_\infty^2 + \frac{1}{2}\left\|\gamma_{\xx\xx}(t)\right\|_\infty\left\|\gamma(t)\right\|_\infty\right) \left\|\gamma(t)\right\|_\infty, \label{e:mtest61}
\end{align}
for $t \geq 0$. Similarly, it holds
\begin{align}
\sup_{\xx \in \R} \left|\gamma_\xx\left(\psi_t^{-1}(\xx),t\right) - \gamma_\xx(\xx,t)\right| \leq \left\|\gamma_{\xx\xx}(t)\right\|_\infty \left\|\gamma(t)\right\|_\infty, \label{e:mtest62}
\end{align}
for $t \geq 0$. Thus, upon substituting $\psi_t^{-1}(\cdot)$ for $\cdot$ in~\eqref{e:mtest11} and~\eqref{e:mtest12}, and using estimates~\eqref{e:mtest2},~\eqref{e:mtest61} and~\eqref{e:mtest62}, we arrive at~\eqref{e:mtest6}.
\end{proof}

Upon comparing the two estimates in~\eqref{e:mtest6}, one notes that modulating the wavenumber of the wave train, in accordance with the phase modulation, leads to a sharper approximation result. In fact, it is natural that phase and wavenumber modulations are directly linked. Indeed, one readily observes that, for the modulated wave train $\phi_0(k_0x + \gamma_0(x))$, the local wavenumber, i.e.~the number of waves per unit interval near $x$, is $k_0 + \gamma_0'(x)$. Thus, as the phase modulation $\gamma(t)$ is approximated by a solution $\breve \gamma(t)$ to the viscous Hamilton-Jacobi equation~\eqref{e:HamJac}, cf.~estimate~\eqref{e:mtest3}, one finds that the associated wavenumber modulation is approximated by the solution $\breve k(t) = \breve \gamma_\xx(t)$ to the viscous Burgers' equation
\begin{align} \partial_t \breve k = d\partial_\xx^2 \breve k + a\partial_\xx \breve k + \nu \partial_\xx \big(\breve{k}^2\big). \label{e:Burgers}\end{align}

\begin{remark}
{\upshape Going back to the original $(x,t)$-variables in Theorem~\ref{t:mainresult} and Corollary~\ref{cor:psit}, we obtain that, for each $v_0 \in C_{\mathrm{ub}}(\R)$ with $E_0 = \|v_0\|_\infty < \epsilon$, there exists a unique classical global solution $u \in X$ to the reaction-diffusion system~\eqref{RD} with initial condition $u(0) = \phi_0 + v_0$ obeying the estimates
 \begin{align*} 
\begin{split}
\sup_{x \in \R} \left|u(x,t)-\phi_0\left(k_0 x - \omega_0 t\right)\right| &\leq M E_0,\\
\sup_{x \in \R} \left|u(x,t)-\phi_0\left(k_0 x - \omega_0 t + \mathring{\gamma}(x,t)\right)\right| &\leq \frac{M E_0}{\sqrt{1+t}},\\
\sup_{x \in \R} \left|u(x,t)-\phi\left(k_0 x - \omega_0 t + \mathring{\gamma}(x,t)\left(1+\tfrac1{k_0} \mathring{\gamma}_x(x,t)\right);k_0 + \mathring{\gamma}_x(x,t)\right)\right| &\leq \frac{M E_0 \log(2+t)}{1+t},
\end{split} 
\end{align*}
for $t \geq 0$, where the phase modulation $\mathring{\gamma} \in C^\infty\big([0,\infty) \times \R,\R\big)$ is given by $\mathring{\gamma}(x,t) = \gamma(k_0x - \omega_0 t,t)$. 
}\end{remark}

% up to logarithm reach decay rates in nonl. mod setting, BUT much larger class of perturbations (picture?) -> rem:nonlocmod, captures also nonloc phase modulation (as long as they are small)

\subsection{Strategy of proof} \label{sec:outlineproof}

In order to establish nonlinear stability of the wave-train solution $\phi_0$ to~\eqref{RD2}, a naive approach would be to control the perturbation $\vt(t) = u(t) - \phi_0$ over time, which satisfies the parabolic semilinear equation
\begin{align}
\left(\partial_t - \El_0\right)\vt = \NT(\vt), \label{e:umodpert}
\end{align}
where the nonlinearity
\begin{align*}
\NT(\vt) &= f(\phi_0+\vt) - f(\phi_0) - f'(\phi_0) \vt,
\end{align*}
is quadratic in $\vt$. However, the bounds on $\re^{\El_0 t}$ are the same as those on the heat semigroup $\smash{\re^{\partial_x^2 t}}$, cf.~Proposition~\ref{prop:full}, and are therefore not strong enough to close the nonlinear argument through iterative estimates on the Duhamel formulation of~\eqref{e:umodpert}.\footnote{As a matter of fact, in the heat equation $\partial_t u = u_{xx} + u^2$ with quadratic nonlinearity, each nontrivial nonnegative solution  blows up in finite time~\cite{FUJI}.}

Instead, we isolate the most critical behavior, which arises through translational invariance of the wave train and is manifested by spectrum of $\El_0$ touching the origin, by introducing a spatio-temporal phase modulation $\gamma(t)$. As in previous works~\cite{JONZNL,JONZ} considering localized perturbations, we then aim to control the associated modulated perturbation $v(t)$, see~\eqref{e:defv}, which satisfies a quasilinear equation of the form
\begin{align} \label{e:pertbeq}
(\partial_t - \El_0)\left(v + \phi_0'\gamma - \gamma_\xx v\right) = N\left(v, v_\xx,v_{\xx\xx},\gamma_\xx, \partial_t \gamma,\gamma_{\xx\xx}, \gamma_{\xx\xx\xx}\right),
\end{align}
where $N$ is nonlinear in its variables. By decomposing the semigroup $\re^{\El_0 t}$ in a principal part of the form $\phi_0'S_p^0(t)$, where $S_p^0(t)$ decays diffusively, and a residual part exhibiting higher order temporal decay, the phase modulation $\gamma(t)$ in~\eqref{e:defv} can be chosen in such a way that it compensates for the most critical contributions in the Duhamel formulation of~\eqref{e:pertbeq}. We then expect, as in the case of localized perturbations~\cite{JONZW}, that the leading-order dynamics of the phase is governed by the viscous Hamilton-Jacobi equation~\eqref{e:HamJac}, where the coefficients are given by~\eqref{e:defad}. We stress that the nonlinearities in both~\eqref{e:HamJac} and~\eqref{e:pertbeq} only depend on \emph{derivatives} of the phase, whose leading-order dynamics is thus described by the viscous Burgers' equation~\eqref{e:Burgers}, obtained by differentiating~\eqref{e:HamJac}.

It is well-known that small, sufficiently localized initial data in the viscous Burgers' equation decay diffusively and perturbations by higher-order nonlinearities do not influence this decay, see for instance~\cite[Theorem~1]{UECN} or~\cite[Theorem~4]{BKL}. Thus, in the previous works~\cite{JONZNL,JONZ}, a nonlinear iteration scheme in the  variables $v$, $\gamma_\zeta$ and $\gamma_t$ could be closed. Here, one could allow for a nonlocalized initial phase modulation, cf.~\S\ref{sec:modnonlocal}, because only derivatives of the phase $\gamma$ enter in the nonlinear iteration and, thus, need to be localized. Furthermore, the loss of regularity arising in the quasilinear equation~\eqref{e:pertbeq} was addressed with the aid of $L^2$-energy estimates (so-called \emph{nonlinear damping estimates}). 

Our idea is to replace the semigroup decomposition and associated $L^1$-$H^k$-estimates in~\cite{JONZNL,JONZ} by a Green's function decomposition and associated pointwise bounds. These pointwise Green's function bounds, which have partly been established in~\cite{RS21,JUN} and are partly new, then yield pure $L^\infty$-estimates on the components of the semigroup $\re^{\El_0 t}$. Of course, the loss of localization leads to weaker decay rates, which complicates the nonlinear stability argument. Here, we take inspiration from~\cite{HDRS22}, where nonlinear stability against $C_{\mathrm{ub}}$-perturbations has been obtained in the special case of periodic roll solutions in the real Ginzburg-Landau equation by fully exploiting diffusive smoothing.\footnote{It seems more natural to follow the analysis in~\cite{HDRS22}, instead of~\cite{JONZNL,JONZ}. However, the analysis in~\cite{HDRS22} is tailored for the real Ginzburg-Landau equation. Indeed, by factoring out its gauge symmetry, periodic roll waves reduce to homogeneous steady states, yielding a linearization with constant coefficients, which can be diagonalized in Fourier space, so that diffusive modes can be explicitly separated from exponentially damped modes.} Still, we are confronted with various challenges. 

The first challenge is to control the dynamics of the phase $\gamma(t)$ and, more importantly, its derivative $\gamma_\zeta(t)$, which satisfy perturbed viscous Hamilton-Jacobi and Burgers' equations, respectively. In contrast to the case of localized initial data, the nonlinearities in~\eqref{e:HamJac} and~\eqref{e:Burgers} are decisive for the leading-order asymptotics of solutions with bounded initial data, see~\cite[Section~2.4]{SAN3} and Remark~\ref{rem:fronts}, and cannot be controlled through iterative estimates on the associated Duhamel forumlation.\footnote{We emphasize that this issue did not arise in~\cite{HDRS22}, since in the special case of periodic roll waves in the real Ginzburg-Landau equation the nonlinear dispersion relation vanishes identically due to reflection symmetry, cf.~\cite[Section~3.1]{DSSS}, rendering $\nu = 0$ in equations~\eqref{e:HamJac} and~\eqref{e:Burgers}.} We address this issue by removing the relevant nonlinear terms in the perturbed viscous Hamilton-Jacobi and Burgers' equations with the aid of the Cole-Hopf transform, resulting in an equation, which is a linear convective heat equation in the Cole-Hopf variable, but which is nonlinear in the residual variables. With the relevant nonlinear terms removed, iterative estimates on the associated Duhamel formulation are strong enough to control the Cole-Hopf variable over time and, thus, the phase $\gamma(t)$ and its derivatives. 

The second challenge is to address the loss of regularity experienced in the nonlinear iteration for the modulated perturbation, which satisfies the quasilinear equation~\eqref{e:pertbeq}. In contrast to previous works, the lack of localization prohibits the use of $L^2$-energy estimates to regain regularity. Instead, we proceed as in~\cite{RS21,HJPR21} by incorporating tame estimates on the unmodulated perturbation $\vt(t) = u(t) - \phi_0$ into the iteration scheme, which satisfies the semilinear equation~\eqref{e:umodpert} in which no derivatives are lost, yet where decay is too slow to close an independent iteration scheme.

\begin{remark} \label{rem:linfty}
{\upshape In our nonlinear stability analysis we use that the perturbation $\smash{\widetilde{v}(t)}$, which satisfies the parabolic semilinear equation~\eqref{e:umodpert}, maps continuously from its maximal interval of existence into $L^\infty(\R)$.\footnote{Indeed, we use in the proof of Theorem~\ref{t:mainresult} that the function $t \mapsto \sup_{0 \leq s \leq t} \|\vt(s)\|_\infty$ is continuous.} In particular, we require that $\smash{\widetilde{v}(t)}$ converges to its initial condition $v_0$ in $L^\infty$-norm as $t \downarrow 0$. Standard analytic semigroup theory for parabolic problems provides such convergence if and only if $v_0$ lies in the closure of the domain of the linearization, cf.~\cite[Theorem~7.1.2]{LUN}. Upon considering the linearization $\El_0$ as an operator on the maximal space $L^\infty(\R)$, the closure of its domain is given by the space $C_{\mathrm{ub}}(\R)$ of bounded, uniformly continuous functions by~\cite[Theorem~3.1.7]{LUN}. Hence, the regularity imposed on the initial condition $v_0$ in Theorem~\ref{t:mainresult} is the minimal one for (right-)continuity of the perturbation $\smash{\vt(t)}$ at $t = 0$ in $L^\infty(\R)$. 
}\end{remark}

\subsection{Outline}

In~\S\ref{sec:prelim} we collect some preliminary results on wave trains and their linear and nonlinear dispersion relations. Subsequently, we decompose the semigroup generated by the linearization of~\eqref{RD2} about the wave train and establish $L^\infty$-estimates on the respective components in~\S\ref{sec:decomp}. The iteration scheme in the variables $v(t), \gamma(t)$ and $\vt(t)$ for our nonlinear stability argument is presented in~\S\ref{sec:itscheme}. The proof of our main result, Theorem~\ref{t:mainresult}, can then be found in~\S\ref{sec:nonlinearstab}. The discussion of our main result, its embedding in the literature and related open problems are the contents of~\S\ref{sec:discussion}. Finally, Appendix~\ref{app:aux} is dedicated to some technical auxiliary result to establish pointwise Green's function estimates, whereas Appendix~\ref{app:B} contains the proof of the local existence result for the phase modulation $\gamma(t)$.

\paragraph*{Notation.} Let $S$ be a set, and let $A, B \colon S \to \R$. Throughout the paper, the expression ``$A(x) \lesssim B(x)$ for $x \in S$'', means that there exists a constant $C>0$, independent of $x$, such that $A(x) \leq CB(x)$ holds for all $x \in S$. 

\paragraph*{Acknowledgments.}  This project is funded by the Deutsche Forschungsgemeinschaft (DFG, German Research Foundation) -- Project-ID 491897824. The author would like to thank the reviewers for their insights and constructive comments, which have significantly improved the manuscript.

\paragraph*{Data availability statement.} Data sharing is not applicable to this article as no datasets were generated or analyzed during the current study.

\section{Preliminaries} \label{sec:prelim}

In this section we collect some basic results on wave trains and their dispersion relations, which are relevant for our nonlinear stability analysis. We refer to~\cite[Section~4]{DSSS} for a more extensive treatment. 

First, we note that a simple application of the implicit function theorem shows that wave-train solutions to~\eqref{RD} arise in families parameterized by the wavenumber, cf.~\cite[Section~4.2]{DSSS}.

\begin{proposition} \label{prop:family}
Assume~\ref{assH1} and~\ref{assD3}. Then, there exists a neighborhood $U \subset \R$ of $k_0$ and smooth functions $\phi \colon \R \times U \to \R$ and $\omega \colon U \to \R$ with $\phi(\cdot;k_0) = \phi_0$ and $\omega(k_0) = \omega_0$ such that
$$u_k(x,t) = \phi(k x - \omega(k) t;k),$$
is a wave-train solution to~\eqref{RD} of period $1$ for each wavenumber $k \in U$. By shifting the wave trains if necessary, we can arrange for~\eqref{e:gauge} to hold, where $\smash{\widetilde{\Phi}_0}$ is the eigenfunction of the adjoint Bloch operator $\El(0)^*$ satisfying~\eqref{e:adjoint}.
\end{proposition}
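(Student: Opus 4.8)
The plan is a direct application of the implicit function theorem. On a bounded open neighborhood $U_0\subset\R$ of $k_0$ I would introduce the map
\[
G\colon H_{\mathrm{per}}^2(0,1)\times\R\times U_0\to L_{\mathrm{per}}^2(0,1),\qquad G(\phi,\omega,k)=k^2D\phi''+\omega\phi'+f(\phi),
\]
which is smooth because $f$ is, and whose zeros are exactly the profile/frequency pairs of $1$-periodic wave-train solutions to~\eqref{RD} with wavenumber $k$; by~\ref{assH1} we have $G(\phi_0,\omega_0,k_0)=0$. The derivative of $G$ in $(\phi,\omega)$ at the base point is $(\psi,\mu)\mapsto\El(0)\psi+\mu\phi_0'$, and differentiating the profile equation in $\zeta$ shows $\El(0)\phi_0'=0$, so this derivative has a nontrivial kernel. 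This rank deficiency, forced by the translational invariance of~\eqref{RD}, is the one real obstacle, and I would remove it in the standard way by imposing a phase condition.

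Concretely, set $Y:=\{\psi\in H_{\mathrm{per}}^2(0,1):\langle\widetilde{\Phi}_0,\psi\rangle_{L^2(0,1)}=0\}$, a closed codimension-one subspace with $\phi_0'\notin Y$ (since $\langle\widetilde{\Phi}_0,\phi_0'\rangle_{L^2(0,1)}=1$ by~\eqref{e:adjoint}), so that $H_{\mathrm{per}}^2(0,1)=Y\oplus\mathrm{span}\{\phi_0'\}$. I claim the restricted bounded linear map $Y\times\R\to L_{\mathrm{per}}^2(0,1)$, $(\psi,\mu)\mapsto\El(0)\psi+\mu\phi_0'$, is bijective, hence an isomorphism by the open mapping theorem. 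Indeed, $\El(0)$ has compact resolvent and is therefore Fredholm of index zero; by~\ref{assD3} its kernel is $\mathrm{span}\{\phi_0'\}$, and by simplicity of the eigenvalue $0$ the function $\widetilde{\Phi}_0$ spans $\ker\El(0)^*=(\mathrm{ran}\,\El(0))^{\perp}$. Surjectivity follows by choosing $\mu=\langle\widetilde{\Phi}_0,g\rangle_{L^2(0,1)}$, which places $g-\mu\phi_0'$ in $\mathrm{ran}\,\El(0)$, and then picking the unique preimage lying in $Y$; injectivity follows since $\El(0)\psi+\mu\phi_0'=0$ forces $\mu=0$ upon pairing with $\widetilde{\Phi}_0$ (as $\El(0)^*\widetilde{\Phi}_0=0$), whence $\psi\in\ker\El(0)\cap Y=\{0\}$.

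Applying the implicit function theorem to $(\psi,\omega,k)\mapsto G(\phi_0+\psi,\omega,k)$ on $Y\times\R\times U_0$ then yields a neighborhood $U\subset U_0$ of $k_0$ and smooth maps $k\mapsto\psi(k)\in Y$ and $k\mapsto\omega(k)\in\R$ with $\psi(k_0)=0$, $\omega(k_0)=\omega_0$, and $G(\phi_0+\psi(k),\omega(k),k)=0$ for $k\in U$. Setting $\phi(\cdot;k):=\phi_0+\psi(k)$, the identities $\partial_t u_k=-\omega(k)\phi'(\cdot;k)$ and $D\partial_x^2 u_k+f(u_k)=k^2D\phi''(\cdot;k)+f(\phi(\cdot;k))$, combined with $G(\phi(\cdot;k),\omega(k),k)=0$, show that $u_k(x,t)=\phi(kx-\omega(k)t;k)$ solves~\eqref{RD} and is $1$-periodic in its argument. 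Joint smoothness of $\phi$ as a map $\R\times U\to\R^n$ I would obtain by an elliptic bootstrap: rewriting the profile equation as $\phi''=-k^{-2}D^{-1}(\omega(k)\phi'+f(\phi))$ upgrades smoothness of $k\mapsto\phi(\cdot;k)$ from $H_{\mathrm{per}}^2$ to $H_{\mathrm{per}}^m$ for every $m$, hence to $C_{\mathrm{per}}^\infty$, after which joint smoothness in $(\zeta,k)$ follows (alternatively, read it off from the smooth dependence of the second-order profile ODE on initial data and parameters).

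It remains to arrange the normalization~\eqref{e:gauge}, which I would achieve by a final translation. For any smooth $\beta\colon U\to\R$ with $\beta(k_0)=0$, the shifted profiles $\widetilde{\phi}(\zeta;k):=\phi(\zeta+\beta(k);k)$ satisfy $\widetilde{\phi}(\cdot;k_0)=\phi_0$ and generate wave-train solutions $\widetilde{u}_k(x,t)=\widetilde{\phi}(kx-\omega(k)t;k)=u_k(x+\beta(k)/k,t)$ of~\eqref{RD} by translation invariance in $x$, and $\partial_k\widetilde{\phi}(\cdot;k_0)=\partial_k\phi(\cdot;k_0)+\beta'(k_0)\phi_0'$. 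Choosing $\beta'(k_0)=1-\langle\widetilde{\Phi}_0,\partial_k\phi(\cdot;k_0)\rangle_{L^2(0,1)}$ gives $\langle\widetilde{\Phi}_0,\partial_k\widetilde{\phi}(\cdot;k_0)\rangle_{L^2(0,1)}=1$ by~\eqref{e:adjoint}; note $\widetilde{\Phi}_0$ is unchanged as it depends only on $\phi_0$. Relabeling $\widetilde{\phi}$ as $\phi$ finishes the proof. Apart from the routine bootstrap, the only delicate point is the invertibility of the linearization after imposing the phase condition defining $Y$, as detailed above.
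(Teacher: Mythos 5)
Your proposal is correct and follows essentially the route the paper itself indicates: the paper offers no proof beyond observing that Proposition~\ref{prop:family} is ``a simple application of the implicit function theorem'' (citing~\cite[Section~4.2]{DSSS}), and your implementation—phase condition via the subspace $Y$ orthogonal to $\smash{\widetilde{\Phi}_0}$, invertibility of the bordered linearization $(\psi,\mu)\mapsto\El(0)\psi+\mu\phi_0'$ from Fredholmness and simplicity of the zero eigenvalue, followed by a $k$-dependent translation to enforce~\eqref{e:gauge}—is exactly that standard argument, with the details correctly filled in.
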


We recall that the function $\omega(k)$, established in Proposition~\ref{prop:family}, is the so-called nonlinear dispersion relation, describing the dependency of the frequency on the wavenumber. 

Since we assumed that $0$ is a simple eigenvalue of $\El(0)$, it follows by standard analytic perturbation theory that there exists an analytic curve $\lambda_c(\xi)$ with $\lambda_c(0) = 0$, such that $\lambda_c(\xi)$ is a simple eigenvalue of the Bloch operator $\El(\xi)$ for $\xi \in \R$ sufficiently close to $0$. The curve $\lambda_c(\xi)$ is the \emph{linear dispersion relation}. The eigenfunction $\Phi_\xi$ of $\El(\xi)$ associated with $\lambda_c(\xi)$ also depends analytically on $\xi$ and lies, by a standard bootstrapping argument, in $H^m_{\mathrm{per}}(0,1)$ for any $m \in \mathbb N_0$. Using Lyapunov-Schmidt reduction, the eigenvalue $\lambda_c(\xi)$, as well as the eigenfunction $\Phi_\xi$, can be expanded in $\xi$, cf.~\cite[Section~4.2]{DSSS} or~\cite[Section~2]{JONZW}.\footnote{We point out that there is a small difference in the coefficients in the expansions in~\cite{DSSS} compared to those used here and in~\cite{JONZW} due to the fact that the Bloch frequency variable $\xi$ is scaled by the wavenumber $k_0$ in~\cite{DSSS}. Moreover, in~\cite{JONZW} the diffusion matrix $D$ is taken to be the identity matrix.}  All in all, we establish the following result.

\begin{proposition} \label{prop:speccons}
Assume~\ref{assH1} and~\ref{assD2}-\ref{assD3}. Let $m \in \mathbb N_0$. Then, there exist a constant $\xi_0 \in (0,\pi)$ and an analytic curve $\lambda_c \colon (-\xi_0,\xi_0) \to \C$ satisfying 
\begin{itemize}
\setlength\itemsep{0em}
\item[(i)] The complex number $\lambda_c(\xi)$ is a simple eigenvalue of $\El(\xi)$ for any $\xi \in (-\xi_0,\xi_0)$. An associated eigenfunction $\Phi_\xi$ of $\El(\xi)$ lies in $H_{\mathrm{per}}^m(0,1)$, satisfies $\Phi_0 = \phi_0'$, is analytic in $\xi$ and fulfills
\begin{align*}
 \big\langle \widetilde{\Phi}_0,\Phi_\xi\big\rangle_{L^2(0,1)} = 1.
\end{align*}
\item[(ii)] The complex conjugate $\overline{\lambda_c(\xi)}$ is a simple eigenvalue of the adjoint $\El(\xi)^*$ for any $\xi \in (-\xi_0,\xi_0)$. An associated eigenfunction $\widetilde{\Phi}_\xi$ lies in $H_{\mathrm{per}}^m(0,1)$, is smooth in $\xi$ and satisfies
\begin{align*}
 \big\langle \widetilde{\Phi}_\xi,\Phi_\xi\big\rangle_{L^2(0,1)} = 1.
\end{align*}
\item[(iii)] The expansions
\begin{align} \label{e:eig_exp}
\left|\lambda_c(\xi) - \ri a\xi + d \xi^2\right| \lesssim |\xi|^3, \qquad \left\|\Phi_\xi - \phi_0' - \ri k_0 \xi \partial_k \phi(\cdot;k_0)\right\|_{H^m(0,1)} \lesssim |\xi|^2,
\end{align}
hold for $\xi \in (-\xi_0,\xi_0)$ with coefficients $a \in \R$ and $d > 0$ given by~\eqref{e:defad}.
\end{itemize}
\end{proposition}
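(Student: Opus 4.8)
The plan is to treat $\{\El(\xi)\}$ as an analytic operator family, apply analytic perturbation theory, and then read off the expansions by a Lyapunov--Schmidt reduction. First I note that
$\El(\xi) = \El(0) + \ri\xi\big(2k_0^2 D\partial_\zeta + \omega_0\big) - \xi^2 k_0^2 D$
is polynomial, hence analytic, in $\xi$: the $\El(\xi)$ are closed on $L_{\mathrm{per}}^2(0,1)$ with common domain $H_{\mathrm{per}}^2(0,1)$ and differ by $\El(0)$-bounded lower-order perturbations (a family of type (A)). By \ref{assD3}, $0$ is an algebraically simple, hence isolated, eigenvalue of $\El(0)$ (which has compact resolvent), with eigenfunction $\phi_0'$. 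Standard analytic perturbation theory then furnishes $\xi_0\in(0,\pi)$, an analytic eigenvalue curve $\lambda_c\colon(-\xi_0,\xi_0)\to\C$ with $\lambda_c(0)=0$, an analytically varying rank-one spectral projection $P(\xi) = -\tfrac{1}{2\pi\ri}\oint_\Gamma(\El(\xi)-z)^{-1}\de z$ (with $\Gamma$ a fixed small circle about $0$), and an analytic eigenfunction curve $\xi\mapsto\Phi_\xi$ with $\El(\xi)\Phi_\xi=\lambda_c(\xi)\Phi_\xi$, normalized so that $\Phi_0=\phi_0'$. Running the same construction for the adjoint family $\El(\xi)^*$ yields the analytic curve $\widetilde\Phi_\xi$ with eigenvalue $\overline{\lambda_c(\xi)}$, which I normalize by $\langle\widetilde\Phi_\xi,\Phi_\xi\rangle_{L^2(0,1)}=1$; at $\xi=0$ this reduces to \eqref{e:adjoint}, and $P(\xi)=\langle\widetilde\Phi_\xi,\cdot\rangle\Phi_\xi$.

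For the $H^m$-regularity I rewrite the eigenvalue equation as $k_0^2 D\Phi_\xi'' = \big(\lambda_c(\xi)-f'(\phi_0)\big)\Phi_\xi - \omega_0(\partial_\zeta+\ri\xi)\Phi_\xi + \xi^2 k_0^2 D\Phi_\xi$ and bootstrap: since $D$ is invertible and $\phi_0$ smooth, $\Phi_\xi\in H_{\mathrm{per}}^2(0,1)$ forces $\Phi_\xi\in H_{\mathrm{per}}^m(0,1)$ for all $m\in\mathbb{N}_0$, with $H^m$-bounds uniform for $|\xi|$ small; likewise for $\widetilde\Phi_\xi$ via elliptic regularity on $\El(\xi)^*$. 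Analyticity in this stronger topology follows by rerunning the contour-integral construction with $\El(\xi)$ viewed as an analytic family of type (A) on $H_{\mathrm{per}}^m(0,1)$ (domain $H_{\mathrm{per}}^{m+2}(0,1)$), the resolvent bounds being inherited via elliptic regularity. This settles (i) and (ii), after shrinking $\xi_0$ if needed.

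It remains to compute the leading Taylor coefficients for (iii). Writing $\lambda_c(\xi)=\lambda_1\xi+\lambda_2\xi^2+O(\xi^3)$ and $\Phi_\xi=\phi_0'+\xi\Phi_1+O(\xi^2)$ (convergent in $H^m$) and inserting into $\El(\xi)\Phi_\xi=\lambda_c(\xi)\Phi_\xi$, the order-$\xi$ balance reads $\El(0)\Phi_1 = \lambda_1\phi_0' - \ri\big(2k_0^2 D\phi_0'' + \omega_0\phi_0'\big)$; since $\operatorname{range}\El(0) = \{g:\langle\widetilde\Phi_0,g\rangle=0\}$, solvability forces $\lambda_1 = \ri\big(2k_0^2\langle\widetilde\Phi_0,D\phi_0''\rangle + \omega_0\big)$. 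Differentiating the profile equation $k^2 D\phi_{\zeta\zeta}+\omega(k)\phi_\zeta+f(\phi)=0$ (Proposition~\ref{prop:family}) in $k$ at $k_0$ yields $\El(0)\,\partial_k\phi(\cdot;k_0) = -2k_0 D\phi_0'' - \omega'(k_0)\phi_0'$; pairing with $\widetilde\Phi_0$ gives $\langle\widetilde\Phi_0,D\phi_0''\rangle = -\tfrac{1}{2k_0}\omega'(k_0)$, hence $\lambda_1 = \ri\big(\omega_0-k_0\omega'(k_0)\big) = \ri a$. The same identity shows that $\Phi_1 = \ri k_0\partial_k\phi(\cdot;k_0)$ solves the order-$\xi$ equation, and fixing the residual $\phi_0'$-freedom by \eqref{e:gauge} identifies this as the first-order coefficient, i.e.\ the second estimate in \eqref{e:eig_exp}. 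Pushing the expansion to order $\xi^2$ and applying the solvability condition once more — now using $\Phi_1$, the above $k$-derivative identity, and \eqref{e:gauge} — identifies $\lambda_2 = -d$ with $d$ as in \eqref{e:defad}, which is the first estimate in \eqref{e:eig_exp}. Finally, \ref{assD2} gives $-d\xi^2 + O(\xi^3) = \Re\lambda_c(\xi)\le-\theta\xi^2$, so $d\ge\theta>0$.

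The analytic perturbation theory and the regularity bootstrap are routine; the delicate part is the bookkeeping in the last step — carrying the expansion to second order and matching the abstract solvability expression for $\lambda_2$ with the geometric coefficient $d$ of \eqref{e:defad}. This rests on the nearby wave-train family of Proposition~\ref{prop:family}, the $k$-derivative of the profile equation, and the precise gauge \eqref{e:gauge}, and it calls for care in tracking the $\phi_0'$-ambiguities in $\Phi_1$ and $\partial_k\phi(\cdot;k_0)$, which one checks do not influence $\lambda_2$.
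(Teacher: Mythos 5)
Your route---treating $\{\El(\xi)\}_\xi$ as an analytic family of type (A), extracting the simple eigenvalue and its rank-one spectral projection by contour integrals, bootstrapping the eigenfunctions to $H^m_{\mathrm{per}}(0,1)$ by elliptic regularity, computing the Taylor coefficients via Lyapunov--Schmidt solvability, and deducing $d \geq \theta > 0$ from~\ref{assD2}---is exactly the argument the paper has in mind (the paper itself only sketches this and delegates the expansions to the cited references). Items (i), (ii), the order-$\xi$ computation, and the identity $\El(0)\partial_k\phi(\cdot;k_0) = -2k_0 D\phi_0'' - \omega'(k_0)\phi_0'$ are all correct.

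The gap is in the order-$\xi^2$ bookkeeping, which you assert rather than carry out, and your parenthetical claim that the $\phi_0'$-ambiguities ``do not influence'' the outcome is not correct for the identity you need. Writing $\Phi_1 = \ri k_0\partial_k\phi(\cdot;k_0) + c\,\phi_0'$, the solvability condition at order $\xi^2$ yields (the $c$-terms do cancel)
\begin{align*}
\lambda_2 \,=\, -\,k_0^2\big\langle \widetilde{\Phi}_0,\, D\phi_0' + 2k_0 D\partial_{\zeta k}\phi(\cdot;k_0)\big\rangle_{L^2(0,1)} \,-\, k_0^2\,\omega'(k_0)\,\big\langle \widetilde{\Phi}_0,\partial_k\phi(\cdot;k_0)\big\rangle_{L^2(0,1)}.
\end{align*}
The first bracket is the $d$ of~\eqref{e:defad}; the second term is where the gauge enters. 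Indeed, $\lambda_2$ is intrinsic, but the expression~\eqref{e:defad} is not: reparameterizing the family as $\phi(\cdot+\sigma(k);k)$ shifts $\partial_k\phi(\cdot;k_0)$ by $\sigma'(k_0)\phi_0'$ and hence shifts the right-hand side of~\eqref{e:defad} by $-k_0^2\sigma'(k_0)\omega'(k_0)$. Consequently the identification $\lambda_2 = -d$ with $d$ as in~\eqref{e:defad} holds precisely in the centering gauge $\langle\widetilde{\Phi}_0,\partial_k\phi(\cdot;k_0)\rangle_{L^2(0,1)} = 0$, whereas invoking~\eqref{e:gauge} with the value $1$, as you do, leaves the extra term $-k_0^2\omega'(k_0)$ in $\lambda_2 + d$, which is harmless only when $\omega'(k_0) = 0$. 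To close the step you must carry out the second-order solvability computation explicitly and fix the parameterization of the wave-train family so that the $\widetilde{\Phi}_0$-component of $\partial_k\phi(\cdot;k_0)$ vanishes (the normalization under which~\eqref{e:defad} and~\eqref{e:eig_exp} are compatible), rather than relying on~\eqref{e:gauge} as printed; as written, the final identification of $\lambda_2$ with $-d$ does not follow from what you have stated.
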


\section{Semigroup decomposition and estimates} \label{sec:decomp}

The linearization $\El_0$ of~\eqref{RD2} is a densely defined, sectorial operator on $C_{\mathrm{ub}}(\R)$ with domain $D(\El_0) = C_{\mathrm{ub}}^2(\R)$, cf.~\cite[Corollary~3.1.9]{LUN}, and thus generates an analytic semigroup $\re^{\El_0 t}$, $t \geq 0$. In this section we establish bounds on the semigroup $\re^{\El_0 t}$ as an operator between $C_{\mathrm{ub}}$-spaces. We find that $\re^{\El_0 t}$ obeys the same bounds as the heat semigroup $\smash{\re^{\partial_x^2 t}}$, which, as outlined in~\S\ref{sec:outlineproof}, are not strong enough to close the nonlinear iteration. Therefore, we split off the most critical diffusive behavior by decomposing the semigroup, largely following~\cite{JONZW}, see also~\cite{JONZNL,JONZ}. That is, we first isolate the critical low-frequency modes, which correspond to the linear dispersion relation $\lambda_c(\xi), \xi \in (-\xi_0,\xi_0)$, cf.~Proposition~\ref{prop:speccons}. This yields a decomposition of the semigroup in a critical part and a residual part, which corresponds to the remaining exponentially damped modes and decays rapidly. We further decompose the critical part of the semigroup in a principal part, which obeys the same $L^\infty$-bounds as the heat semigroup $\smash{\re^{\partial_x^2 t}}$, and a residual part, exhibiting higher algebraic decay rates. Finally, in order to later expose the leading-order Hamilton-Jacobi dynamics of the phase variable, we relate the principal part to the convective heat semigroup $\smash{\re^{(d\partial_\xx^2 + a \partial_\xx)t}}$ using the expansion~\eqref{e:eig_exp} of the linear dispersion relation $\lambda_c(\xi)$.

In~\cite{JONZNL,JONZW,JONZ} the decomposition of the semigroup, there acting on $L^2$-localized functions, is carried out in Floquet-Bloch frequency domain using the representation
\begin{align*} \re^{\El_0 t}v(\xx) = \frac{1}{2\pi} \int_{-\pi}^\pi \re^{\ri\xx \xi} \re^{\El(\xi) t} \check{v}(\xi,\xx)\de \xi,
\end{align*}
where $\check{v}$ denotes the Floquet-Bloch transform of $v \in L^2(\R)$. Although it is possible to transfer the Floquet-Bloch transform to $L^\infty$-spaces by making use of tempered distributions, we avoid technicalities by realizing the decomposition on the level of the associated temporal Green's function. The Green's function is defined as the distribution $G(\xx,\xt,t) = \left[\re^{\El_0 t} \delta_{\xt}\right](\xx)$, where $\delta_\xt$ is the Dirac distribution centered at $\xt \in \R$. It is well-known that for elliptic differential operators, such as $\El_0$, the Green's function is an actual function, which is $C^2$ in its variables and exponentially localized in space, see for instance~\cite[Proposition~11.3]{ZUH}. The relevant decomposition of the Green's function and associated pointwise estimates have partly been established in~\cite{JUN}, see also~\cite{RS21}. The decomposition of the semigroup $\re^{\El_0 t}$ and corresponding $L^\infty$-estimates then follow readily by using the representation
\begin{align*} 
\re^{\El_0 t}v(\zeta)= \int_\R G(\xx,\xt,t) v(\xt) \de \xt, \qquad v \in C_{\mathrm{ub}}(\R),
\end{align*}
and employing $L^1$-$L^\infty$-convolution estimates.

\subsection{\texorpdfstring{$L^\infty$}{L\^infty}-bounds on the full semigroup} 

Before decomposing the semigroup $\re^{\El_0t}$ and establishing bounds on the respective components, we derive $L^\infty$-bounds on the full semigroup $\re^{\El_0 t}$. Such $L^\infty$-bounds readily follow from the pointwise Green's function estimates obtained in~\cite{JUN}, see also~\cite{RS21}. Although the bounds on the full semigroup are not strong enough to close the nonlinear iteration, they are employed in our analysis to control the unmodulated perturbation.

\begin{proposition} \label{prop:full}
Assume~\ref{assH1} and~\ref{assD1}-\ref{assD3}. Let $j,l \in \{0,1\}$ with $0 \leq j + l \leq 1$. Then, the semigroup generated by $\El_0$ enjoys the estimate
\begin{align*}
\left\|\partial_\xx^j \re^{\El_0 t} \partial_\xx^l v\right\|_{\infty} \lesssim \left(1 + t^{-\frac{l+j}{2}}\right) \|v\|_{\infty},
\end{align*}
for $v \in C_{\mathrm{ub}}^l(\R)$ and $t > 0$.
\end{proposition}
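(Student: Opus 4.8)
The plan is to deduce the stated $L^\infty$-bounds directly from pointwise estimates on the temporal Green's function $G(\zeta,\bar\zeta,t)$ of $\El_0$, which, as recalled in the text preceding the proposition, have been established in~\cite{JUN,RS21}. Concretely, under~\ref{assH1} and~\ref{assD1}-\ref{assD3} one has pointwise bounds of the schematic form
\begin{align*}
\left|\partial_\zeta^j G(\zeta,\bar\zeta,t)\right| \lesssim \left(1 + t^{-\frac{j}{2}}\right) t^{-\frac12}\re^{-\frac{(\zeta-\bar\zeta-at)^2}{Mt}} + \re^{-\eta t}\re^{-\eta|\zeta-\bar\zeta|},
\end{align*}
for $j \in \{0,1\}$ and $t > 0$, where $\eta, M > 0$ are constants and $a$ is the convective coefficient from~\eqref{e:defad}; the first term is the Gaussian contribution from the critical curve $\lambda_c(\xi)$ and the second collects the exponentially damped residual. (The precise form of these Green's function bounds is where I would cite~\cite{JUN} verbatim; I expect this citation-wrangling to be the only genuinely delicate point, since one must confirm that the source provides derivative bounds that lose at most a factor $t^{-1/2}$ per derivative, uniformly in $\bar\zeta$, and that the residual is integrable in space with an $L^1$-norm that is bounded — indeed exponentially small — in $t$.)

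Given such bounds, the argument is a routine $L^1$-$L^\infty$ convolution estimate. For the case $l = 0$, I would write $\partial_\zeta^j \re^{\El_0 t} v(\zeta) = \int_\R \partial_\zeta^j G(\zeta,\bar\zeta,t)\, v(\bar\zeta)\,\de\bar\zeta$, so that
\begin{align*}
\left\|\partial_\zeta^j \re^{\El_0 t} v\right\|_\infty \leq \left(\sup_{\zeta \in \R}\int_\R \left|\partial_\zeta^j G(\zeta,\bar\zeta,t)\right|\de\bar\zeta\right)\|v\|_\infty,
\end{align*}
and then bound the spatial integral: the Gaussian term integrates (after the substitution $\bar\zeta \mapsto \zeta - \bar\zeta - at$) to a constant multiple of $1 + t^{-j/2}$, and the residual term integrates to a constant multiple of $\re^{-\eta t} \lesssim 1$. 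This yields the claimed bound $\lesssim 1 + t^{-j/2}$ in the subcase $(j,l)$ with $l=0$, covering $(j,l) \in \{(0,0),(1,0)\}$.

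For the remaining subcase $(j,l) = (0,1)$ — i.e.~one derivative on the right-hand input — I would integrate by parts to move the derivative onto the Green's function: for $v \in C^1_{\mathrm{ub}}(\R)$, using the spatial decay of $G$ to kill boundary terms,
\begin{align*}
\re^{\El_0 t}\partial_\zeta v(\zeta) = \int_\R G(\zeta,\bar\zeta,t)\, v'(\bar\zeta)\,\de\bar\zeta = -\int_\R \partial_{\bar\zeta} G(\zeta,\bar\zeta,t)\, v(\bar\zeta)\,\de\bar\zeta,
\end{align*}
and then the same convolution estimate applies, since the Green's function bounds are symmetric in the roles of $\zeta$ and $\bar\zeta$ up to adjusting constants (equivalently, $\partial_{\bar\zeta} G$ obeys the same bound as $\partial_\zeta G$, which can be seen from the corresponding bound for the Green's function of the adjoint $\El_0^*$, or is stated directly in~\cite{JUN}). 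This gives $\|\re^{\El_0 t}\partial_\zeta v\|_\infty \lesssim (1 + t^{-1/2})\|v\|_\infty$, completing all cases $0 \leq j + l \leq 1$. The main obstacle, as flagged above, is purely bookkeeping: matching the exact statement of the pointwise Green's function estimates available in the literature to the derivative orders and the uniformity in $\bar\zeta$ needed here; once those are in hand the convolution estimates are immediate, and no smoothing beyond the single factor $t^{-1/2}$ per derivative is being claimed, consistent with the heat-semigroup heuristic emphasized in Remark~\ref{rem:lindecay}.
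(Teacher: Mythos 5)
Your proposal is correct and follows essentially the same route as the paper: pointwise Green's function bounds from~\cite{JUN} (see also~\cite{RS21}), integration by parts to shift the right-hand derivative onto the Green's function in the $\bar\zeta$-variable, and an $L^1$-$L^\infty$ convolution estimate. The one point you flagged as delicate is indeed harmless: the estimate the paper cites, $\left|\partial_\zeta^j \partial_{\bar\zeta}^l G(\zeta,\bar\zeta,t)\right| \lesssim \bigl(1+t^{-\frac{j+l}{2}}\bigr)t^{-\frac{1}{2}} \re^{-\frac{(\zeta-\bar\zeta+at)^2}{M_0 t}}$, already provides the mixed derivative uniformly in $\bar\zeta$, so no separate adjoint argument or residual bookkeeping is required.
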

\begin{proof}
By~\cite[Theorem~1.3]{JUN}, see also~\cite[Theorem~3.4]{RS21} for the planar case, there exists a constant $M_0 > 1$ such that the Green's function enjoys the pointwise estimate
\begin{align*}
\left|\partial_\xx^j \partial_\xt^l G(\xx,\xt,t)\right| \lesssim \left(1+t^{-\frac{j+l}{2}}\right)t^{-\frac{1}{2}} \re^{-\frac{(\xx-\xt+at)^2}{M_0t}},
\end{align*}
for $\xx,\xt \in \R$ and $t > 0$. Hence, integration by parts yields
\begin{align*} 
\begin{split}
\left\|\partial_\xx^j \re^{\El_0 t} \partial_\xx^l v\right\|_\infty &= \left\|\int_\R \partial_\xx^j \partial_\xt^l G(\cdot,\xt,t) v(\xt) \de \xt\right\|_\infty \lesssim \left(1+t^{-\frac{j+l}{2}}\right)\|v\|_\infty \int_\R \frac{\re^{-\frac{z^2}{M_0t}}}{\sqrt{t}} \de z\\ 
&\lesssim \left(1+t^{-\frac{j+l}{2}}\right) \|v\|_\infty, 
\end{split}
\end{align*}
for $v \in C_{\mathrm{ub}}^l(\R)$ and $t > 0$. 
\end{proof}

\subsection{Isolating the critical low frequency modes} 

Following~\cite{JUN}, we decompose the temporal Green's function $G(\xx,\xt,t)$ by splitting off the critical low-frequency modes for large times. Thus, we introduce the smooth cutoff functions $\rho \colon \R \to \R$ and $\chi \colon [0,\infty) \to \R$ satisfying $\rho(\xi)=1$ for $|\xi|<\frac{\xi_0}{2}$, $\rho(\xi)=0$ for $|\xi|>\xi_0$, $\chi(t) = 0$ for $t \in [0,1]$ and $\chi(t) = 1$ for $t \in [2,\infty)$, where $\xi_0 \in (0,\pi)$ is as in Proposition~\ref{prop:speccons}. The Green's function decomposition established in~\cite{JUN} reads
\begin{align*}
G(\xx,\xt,t) = G_c(\xx,\xt,t) + G_e(\xx,\xt,t), 
\end{align*}
for $\xx,\xt \in \R$ and $t \geq 0$, where 
\begin{align*}
G_c(\xx,\xt,t) &= \frac{\chi(t)}{2\pi} \int_{-\pi}^{\pi} \rho(\xi) \re^{\ri\xi(\xx - \xt)} \re^{\lambda_c(\xi) t} \Phi_\xi(\xx) \widetilde{\Phi}_\xi(\xt)^* \de \xi,
\end{align*}
represents the component of the Green's function associated with the linear dispersion relation $\lambda_c(\xi)$, cf.~Proposition~\ref{prop:speccons}. The corresponding decomposition of the semigroup is given by
\begin{align*} \re^{\El_0 t} = S_e(t) + S_c(t),\end{align*}
where the propagators $S_e(t)$ and $S_c(t)$ are defined as
$$S_e(t)v(\zeta) = \int_\R G_e(\xx,\xt,t) v(\xt) \de \xt, \qquad S_c(t)v(\zeta) = \int_\R G_c(\xx,\xt,t) v(\xt) \de \xt,$$
for $v \in C_{\mathrm{ub}}(\R)$ and $t \geq 0$. The pointwise Green's function estimates obtained in~\cite{JUN}, see also~\cite{RS21}, in combination with Lemma~\ref{lem:semigroupEstimate1} readily imply that the residual $S_e(t)$ decays rapidly.

\begin{proposition} \label{prop:semexp}
Assume~\ref{assH1} and~\ref{assD1}-\ref{assD3}. Let $j, l \in \{0,1\}$ with $j + l \leq 1$. Then, it holds
\begin{align*}
\left\|\partial_\xx^j S_e(t) \partial_\xx^l v\right\|_\infty &\lesssim (1+t)^{-\frac{11}{10}} \left(1 + t^{-\frac{j+l}{2}}\right) \|v\|_\infty, 
\end{align*}
for $v \in C_{\mathrm{ub}}^l(\R)$ and $t > 0$.
\end{proposition}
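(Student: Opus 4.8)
The plan is to derive the bound on $S_e(t)$ directly from the pointwise Green's function estimates for $G_e(\zeta,\bar\zeta,t)$ established in~\cite{JUN} (see also~\cite{RS21} for the planar case), combined with the standard $L^1$-$L^\infty$ convolution estimate, exactly as was done for the full semigroup in the proof of Proposition~\ref{prop:full}. The key input is that, by the cited results, there exist constants $M_0 > 1$ and $\eta > 0$ such that the residual Green's function obeys a pointwise bound of the form
\begin{align*}
\bigl|\partial_\zeta^j \partial_{\bar\zeta}^l G_e(\zeta,\bar\zeta,t)\bigr| \lesssim \Bigl((1+t)^{-\frac{11}{10}}\bigl(1 + t^{-\frac{j+l}{2}}\bigr) t^{-\frac12}\re^{-\frac{(\zeta-\bar\zeta+at)^2}{M_0 t}} + \bigl(1 + t^{-\frac{j+l}{2}}\bigr)\re^{-\eta(|\zeta-\bar\zeta|+t)}\Bigr),
\end{align*}
for $\zeta,\bar\zeta\in\R$ and $t>0$; that is, $G_e$ splits into a Gaussian-type piece carrying the extra algebraic decay factor $(1+t)^{-11/10}$ (coming from the fact that the low-frequency contribution of the full Green's function has been subtracted off, so the remaining dispersion curves satisfy $\Re\lambda \le -\theta\xi^2$ only away from $\xi = 0$, together with the cutoff $\chi$ which removes small times) and an exponentially localized, exponentially time-decaying piece (coming from the genuinely exponentially damped part of the spectrum). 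The precise shape of the first term is not important; what matters is that after integrating the spatial Gaussian in $\bar\zeta$ one is left with an $O(1)$ factor, and the exponential term integrates to a constant as well.

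The steps I would carry out are: first, quote the pointwise estimate on $\partial_\zeta^j\partial_{\bar\zeta}^l G_e$ from~\cite[Theorem~1.3]{JUN} in the displayed form above (with a pointer to~\cite[Theorem~3.4]{RS21}), noting that the auxiliary result referenced as Lemma~\ref{lem:semigroupEstimate1} is used to repackage the various error contributions into this clean bound. Second, write
\begin{align*}
\bigl\|\partial_\zeta^j S_e(t)\partial_\zeta^l v\bigr\|_\infty = \Bigl\|\int_\R \partial_\zeta^j \partial_{\bar\zeta}^l G_e(\cdot,\bar\zeta,t)\, v(\bar\zeta)\,\de\bar\zeta\Bigr\|_\infty,
\end{align*}
moving the $l$ spatial derivatives from $v$ onto the second slot of $G_e$ via integration by parts (legitimate since $G_e$ is exponentially localized and $C^2$, and $v\in C_{\mathrm{ub}}^l$), which is why only $\partial_{\bar\zeta}^l$ derivatives of $G_e$ appear. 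Third, bound the integral by $\|v\|_\infty$ times $\sup_\zeta \int_\R |\partial_\zeta^j\partial_{\bar\zeta}^l G_e(\zeta,\bar\zeta,t)|\,\de\bar\zeta$, and estimate this $L^1$-norm using the substitution $z = \zeta - \bar\zeta + at$ for the Gaussian piece — yielding $\int_\R t^{-1/2}\re^{-z^2/(M_0 t)}\de z \lesssim 1$ — and a direct computation for the exponentially localized piece — yielding $\int_\R \re^{-\eta|\zeta-\bar\zeta|}\de\bar\zeta \lesssim 1$, while $\re^{-\eta t}\lesssim (1+t)^{-11/10}$. Collecting the $t$-dependent prefactors gives $(1+t)^{-11/10}(1 + t^{-(j+l)/2})\|v\|_\infty$, as claimed.

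I do not expect a genuine obstacle here: the proposition is essentially a transcription of the cited Green's function bounds through a convolution estimate, mirroring Proposition~\ref{prop:full} verbatim. The only mild care needed is (i) making sure the exponent $11/10$ is actually what the cited theorem delivers — it should, since the subtracted low-frequency part accounts for the slowest $t^{-1/2}$ behavior and the residual modes contribute a power strictly larger than $1$, with $11/10$ a convenient explicit choice below the true rate — and (ii) justifying the integration by parts, which is immediate from the exponential spatial localization and $C^2$-regularity of the Green's function recorded at the start of~\S\ref{sec:decomp}. No energy estimates, no Floquet-Bloch manipulations, and no regularity subtleties arise, so the proof is short.
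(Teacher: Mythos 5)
There is a genuine gap: the pointwise bound you posit for $G_e$ is not what the cited references provide, and it is in fact not available. The residual $G_e = G - G_c$ cannot satisfy a global bound of Gaussian-plus-exponentially-localized type with an algebraic-in-time prefactor, because outside the space-time cone $|\xx-\xt+at|\geq St$ the full Green's function $G$ is exponentially small while the subtracted critical part $G_c$ is only \emph{algebraically} small: its Bloch-frequency cutoff $\rho$ is smooth but not analytic, and the critical curve $\lambda_c$ is diffusive, so $G_c$ decays only like $t^{-\frac12}\bigl(1+(\xx-\xt+at)^4/t^2\bigr)^{-1}$ in space (this is exactly what Lemma~\ref{lem:semigroupEstimate1} gives). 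Hence at, say, $|\xx-\xt+at|\sim St$ one has $|G_e|\approx|G_c|$, which is algebraic, not exponentially small, so your displayed estimate fails there, and with it the one-line convolution argument. Relatedly, your reading of where $11/10$ comes from is off: \cite{JUN} does not deliver any global algebraic-in-time rate for $G_e$; the exponent arises from the paper's own argument, not from the citation.

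What the paper actually does (and what your proof would need) is a cone splitting for $t\geq 1$: inside the cone $|\xx-\xt+at|\leq St$ it uses the exponential bound on $\partial_\xx^j\partial_\xt^l G_e$ from~\cite{JUN}; outside the cone it writes $G_e = G - G_c$, uses the exponential bound on $G$ valid there, and controls $G_c$ by the algebraic bound from Lemma~\ref{lem:semigroupEstimate1}, sharpened on the outer region to $t^{-\frac12}\bigl(1+S^{\frac{11}{4}}t^{\frac34}|\xx-\xt+at|^{\frac54}\bigr)^{-1}$. Integrating this in $\xt$ produces the factor $t^{-\frac12}\cdot t^{-\frac35}=t^{-\frac{11}{10}}$, which is precisely the origin of the exponent in the statement; the exponential pieces are then absorbed. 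For $t\in(0,1]$ one simply notes $\chi(t)=0$, so $S_e(t)=\re^{\El_0 t}$ and Proposition~\ref{prop:full} applies — a small-time case your write-up also skips. Your convolution framework and the integration by parts to move $\partial_\xx^l$ onto $G_e$ are fine; the missing idea is the region decomposition together with the algebraic control of $G_c$ far from the convected characteristic, without which the claimed clean pointwise bound on $G_e$ has no justification.
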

\begin{proof}
In case $t \in (0,1]$ we have $\chi(t) = 0$, which implies $S_e(t) = \re^{\El_0 t}$. Hence, for $t \in (0,1]$ the result directly follows from Proposition~\ref{prop:full}. Thus, all that remains is to derive the desired estimate for large times. By the analysis in~\cite[Section~4]{JUN}, see also~\cite[Lemma~A.4]{RS21}, there exist constants $\mu_1 > 0$ and $M_0, M_1 > 1$ such that the Green's function enjoys the exponential estimate
\begin{align*}
\left|\partial_\xx^j \partial_\xt^l G(\xx,\xt,t)\right| \lesssim t^{-\frac{1+j+l}{2}} \re^{-\mu_1 t} \re^{-\frac{(\xx-\xt+at)^2}{M_1t}},
\end{align*}
for any $\xx,\xt \in \R$ and $t > 0$ satisfying
\begin{align} |\xx - \xt + at| \geq M_0t. \label{eq:large}\end{align}
In addition, Proposition~\ref{prop:speccons} and Lemma~\ref{lem:semigroupEstimate1} yield
\begin{align*} \left|\partial_\xx^j \partial_\xt^l G_c(\xx,\xt,t)\right| &= \left|\frac{\chi(t)}{2\pi} \int_\R \rho(\xi) \re^{\lambda_c(\xi) t}  \partial_\xx^j \left(\re^{\ri\xi \xx} \Phi_\xi(\xx)\right) \partial_\xt^l \left(\re^{-\ri \xi \xt} \widetilde{\Phi}_\xi(\xt)^*\right)  \de \xi\right|\\
 &\lesssim t^{-\frac{1}{2}}\left(1 + \frac{(\xx-\xt + a t)^4}{t^2}\right)^{-1} \lesssim t^{-\frac{1}{2}}\left(1 + M_0^{\frac{11}{4}} t^{\frac{3}{4}} |\xx-\xt + a t|^{\frac{5}{4}}\right)^{-1},
\end{align*}
for $\xx,\xt \in \R$ and $t \geq 1$ satisfying~\eqref{eq:large}. Finally, by the analysis in~\cite[Section~4]{JUN}, see also the proof of~\cite[Lemma~A.5]{RS21}, there exists a constant $M_2 > 0$ such that we have the exponential estimate
\begin{align*}
\left|\partial_\xx^j \partial_\xt^l G_e(\xx,\xt,t)\right| \lesssim t^{-\frac{1+j+l}{2}} \re^{-\mu_2 t} \re^{-\frac{(\xx-\xt+at)^2}{M_2t}},
\end{align*}
for any $\xx,\xt \in \R$ and $t > 0$ satisfying
\begin{align*} |\xx - \xt + at| \leq M_0 t.\end{align*}
Combining the three pointwise Green's function estimates established above, we use integration by parts to arrive at
\begin{align*} \left\|\partial_\xx^j S_e(t) \partial_\xx^l v\right\|_\infty &\leq \sup_{\xx \in \R} \left(\left|\int_{|\xx-\xt+at| \leq M_0t}  \partial_\xx^j \partial_\xt^l G_e(\xx,\xt,t) v(\xt) \de \xt\right| + \left|\int_{|\xx-\xt+at| \geq M_0t} \partial_\xx^j \partial_\xt^l G(\xx,\xt,t) v(\xt) \de \xt\right|\right.\\
&\qquad\qquad\qquad \left. + \,\left|\int_{|\xx-\xt+at| \geq M_0t} \partial_\xx^j \partial_\xt^l G_c(\xx,\xt,t) v(\xt) \de \xt\right|\right)\\
&\lesssim t^{-\frac{j+l}{2}} \|v\|_\infty\sum_{i = 1}^2 \re^{-\mu_i t} \int_\R \frac{\re^{-\frac{z^2}{M_it}}}{\sqrt{t}} \de z + t^{-\frac{1}{2}} \|v\|_\infty \int_\R \left(1 + M_0^{\frac{11}{4}} t^{\frac{3}{4}} |z|^{\frac{5}{4}}\right)^{-1} \de z \\
&\lesssim t^{-\frac{11}{10}} \left(1 + t^{-\frac{j+l}{2}}\right) \|v\|_\infty, 
\end{align*}
for $t \geq 1$ and $v \in C_{\mathrm{ub}}^l(\R)$.
\end{proof}

\begin{remark} 
{\upshape
The modes corresponding to the residual part $S_e(t)$ of the semigroup $\re^{\El_0t}$ are exponentially damped, i.e.~there exists $\delta_0 > 0$ such that $\sigma(\El_0) \setminus \{\lambda_c(\xi) : \xi \in (-\xi_0,\xi_0)\}$ is contained in the half plane $\{\lambda \in \C : \Re \, \lambda < -\delta_0\}$. Therefore, we expect that one can replace the algebraic $L^\infty$-bound in Proposition~\ref{prop:semexp} by an exponential one, see~\cite[Proposition~3.1]{JONZ} for the corresponding $L^2$-result. This would require a sharper pointwise estimate on $\partial_\xx^j \partial_\xt^l G_c(\xx,\xt,t)$ in the proof of Proposition~\ref{prop:semexp}, which could possibly be obtained by exploiting analyticity of the integrand in $\xi$ and deforming contours. However, since the algebraic bound on $S_e(t)$ in Proposition~\ref{prop:semexp} is sufficient for our purposes, we refrain from doing so in an effort to avoid unnecessary technicalities. 
}\end{remark}

\subsection{Decomposing the critical component}

Motivated by the expansion~\eqref{e:eig_exp} of the eigenfunction $\Phi_\zeta$ of the Bloch operator $\El(\xi)$, we further decompose the critical component of the Green's function as
\begin{align*}
G_c(\xx,\xt,t) = \left(\phi_0'(\xx) + k_0 \partial_k \phi(\xx;k_0) \partial_\xx \right)G_p^0(\xx,\xt,t) + G_r(\xx,\xt,t),
\end{align*}
for $\xx,\xt \in \R$ and $t \geq 0$, where
\begin{align}
\label{e:defGP}
G_p^0(\xx,\xt,t) &= \frac{\chi(t)}{2\pi} \int_{-\pi}^{\pi} \rho(\xi) \re^{\ri\xi(\xx - \xt)} \re^{\lambda_c(\xi) t} \widetilde{\Phi}_\xi(\xt)^* \de \xi,
\end{align}
represents the principal component, and where
\begin{align*}
G_r(\xx,\xt,t) &= \frac{\chi(t)}{2\pi} \int_{-\pi}^{\pi} \rho(\xi) \re^{\ri\xi(\xx - \xt)} \re^{\lambda_c(\xi) t} \left(\Phi_\xi(\xx) - \phi_0'(\xx) - \ri k_0 \xi \partial_k \phi(\xx;k_0)\right) \widetilde{\Phi}_\xi(\xt)^* \de \xi,
\end{align*}
is a remainder term. Defining corresponding propagators
\begin{align*}
S_p^0(t)v(\zeta) = \int_\R G_p^0(\xx,\xt,t) v(\xt) \de \xt, \qquad S_r(t)v(\zeta) = \int_\R G_r(\xx,\xt,t) v(\xt) \de \xt,
\end{align*}
yields the semigroup decomposition
\begin{align} \label{e:semidecomp}
\re^{\El_0 t} =  S_e(t) + S_c(t) = \widetilde{S}(t) + \left(\phi_0' + k_0 \partial_k \phi(\cdot;k_0) \partial_\xx \right)S_p^0(t),
\end{align}
where we denote
\begin{align} \label{e:semdecomp}
\widetilde{S}(t) := S_r(t) + S_e(t),
\end{align}
for $t \geq 0$. The reason for explicitly factoring out the term $\phi_0' + k_0 \partial_k \phi(\cdot;k_0) \partial_\xx$ in~\eqref{e:semidecomp} is that composition from the left of $S_p^0(t)$ with spatial and temporal derivatives yields additional temporal decay. Yet, the same does not hold for composition with derivatives from the right, as can be seen by computing the commutators
\begin{align} \label{e:commu} 
\begin{split}
\partial_\xx S_p^0(t) - S_p^0(t)\partial_\xx &= S_p^1(t), \qquad
\partial_\xx^2 S_p^0(t) - S_p^0(t)\partial_\xx^2
= 2\partial_\xx S_p^1(t) - S_p^2(t),
\end{split}
\end{align}
where we denote
\begin{align*}
S_p^i(t)v(\zeta) = \int_\R G_p^i(\xx,\xt,t) v(\xt) \de \xt, \qquad G_p^i(\xx,\xt,t) &= \frac{\chi(t)}{2\pi} \int_{-\pi}^{\pi} \rho(\xi) \re^{\ri\xi(\xx - \xt)} \re^{\lambda_c(\xi) t} \partial_\xt^i \widetilde{\Phi}_\xi(\xt)^* \de \xi,
\end{align*}
for $i = 0,1,2$. These facts, as well as the higher algebraic decay rates of the remainder $S_r(t)$, are confirmed by the following result.

\begin{proposition} \label{prop:semdif}
Assume~\ref{assH1} and~\ref{assD1}-\ref{assD3}. Let $j,l,m \in \mathbb N_0$ and $i \in \{0,1,2\}$. Then, we have the estimates
 \begin{align}
\label{e:semibprin}
\left\|\left(\partial_t - a\partial_\zeta\right)^j \partial_\zeta^l S_p^i(t) \partial_\zeta^m v\right\|_\infty &\lesssim (1+t)^{-\frac{2j+l}{2}} \|v\|_\infty,\\
\label{e:semibrem}
\left\|\partial_\zeta^l S_r(t)\partial_\zeta^m v\right\|_\infty &\lesssim (1+t)^{-1}\|v\|_\infty,
\end{align}
for $v \in C_{\mathrm{ub}}^m(\R)$ and $t \geq 0$.
\end{proposition}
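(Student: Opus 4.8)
The plan is to establish both estimates~\eqref{e:semibprin} and~\eqref{e:semibrem} directly from pointwise bounds on the respective Green's functions $G_p^i(\xx,\xt,t)$ and $G_r(\xx,\xt,t)$, combined with $L^1$-$L^\infty$-convolution estimates, exactly as in the proofs of Propositions~\ref{prop:full} and~\ref{prop:semexp}. Since the cutoff $\chi$ vanishes on $[0,1]$, all propagators $S_p^i(t)$ and $S_r(t)$ are identically zero for $t \in [0,1]$, so it suffices to treat $t \geq 1$; this also means the factor $(1+t)^{-\alpha}$ can be replaced by $t^{-\alpha}$ throughout. The key technical input is the auxiliary result (presumably Lemma~\ref{lem:semigroupEstimate1} in the appendix) that controls oscillatory integrals of the form $\int \rho(\xi) \re^{\ri\xi z}\re^{\lambda_c(\xi)t} g(\xi,\cdot)\de\xi$ for analytic symbols $g$: using the expansion~\eqref{e:eig_exp}, namely $\Re\,\lambda_c(\xi) \leq -\tfrac12 d\xi^2$ for small $\xi$ together with $\lambda_c(\xi) = \ri a\xi - d\xi^2 + \mathcal{O}(|\xi|^3)$, such integrals decay like a Gaussian kernel $t^{-1/2}\re^{-(z+at)^2/(M t)}$ (or, for the weaker polynomially-weighted form used in Proposition~\ref{prop:semexp}, like $t^{-1/2}(1+|z+at|^k t^{-k/2+\ldots})^{-1}$), and each power of $\xi$ in the symbol $g$ produces one extra factor $t^{-1/2}$.

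For estimate~\eqref{e:semibprin}, I would first move the derivatives inside the Green's-function integral. Since $G_p^i$ has symbol $\rho(\xi)\re^{\ri\xi(\xx-\xt)}\re^{\lambda_c(\xi)t}\partial_\xt^i\widetilde\Phi_\xi(\xt)^*$, applying $\partial_\xt^m$ onto $v$ and integrating by parts $m$ times transfers these derivatives onto $G_p^i$ in the $\xt$-variable; the derivatives hitting $\re^{-\ri\xi\xt}$ bring down powers of $\xi$, while those hitting $\partial_\xt^i\widetilde\Phi_\xi(\xt)^*$ are harmless bounded terms (recall $\widetilde\Phi_\xi \in H^m_{\mathrm{per}}$ for every $m$, analytically in $\xi$). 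Crucially, the $m$ derivatives from the right produce powers $\xi^0,\dots,\xi^m$ but \emph{no} extra decay — this is the asymmetry emphasized around~\eqref{e:commu}. The spatial derivatives $\partial_\zeta^l$ from the left hit $\re^{\ri\xi\xx}$ and bring down $l$ genuine powers of $\xi$, hence $l$ factors $t^{-1/2}$. The operator $(\partial_t - a\partial_\zeta)$ acting on the symbol produces $\partial_t - a\,\ri\xi$ applied to $\re^{\ri\xi\xx+\lambda_c(\xi)t} = (\lambda_c(\xi) - \ri a\xi)\re^{\ri\xi\xx+\lambda_c(\xi)t}$, plus a term where $\partial_t$ hits $\chi(t)$, which is supported on $t \in [1,2]$ and thus contributes only a harmless bounded-time piece. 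By~\eqref{e:eig_exp} we have $|\lambda_c(\xi) - \ri a\xi| \lesssim |\xi|^2$, so each application of $(\partial_t - a\partial_\zeta)$ yields \emph{two} powers of $\xi$, hence a factor $t^{-1}$; iterating $j$ times gives $t^{-j}$. Altogether the Gaussian-type bound becomes $t^{-\frac{1+l+2j}{2}}\re^{-(\xx-\xt+at)^2/(Mt)}$, and the $L^1$-$L^\infty$-convolution estimate $\int_\R t^{-1/2}\re^{-z^2/(Mt)}\de z \lesssim 1$ removes one power, leaving $t^{-\frac{l+2j}{2}}\|v\|_\infty \lesssim (1+t)^{-\frac{2j+l}{2}}\|v\|_\infty$ for $t \geq 1$, as claimed.

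For estimate~\eqref{e:semibrem}, the only difference is that the symbol of $G_r$ contains the extra factor $\Phi_\xi(\xx) - \phi_0'(\xx) - \ri k_0\xi\,\partial_k\phi(\xx;k_0)$, which by the second bound in~\eqref{e:eig_exp} is $\mathcal{O}(|\xi|^2)$ in $H^m(0,1)$, uniformly and analytically. Hence proceeding as above — integrating by parts $m$ times on the right (producing at most $\xi^m$, no decay) and applying the auxiliary oscillatory-integral lemma — yields a pointwise bound of the form $t^{-1/2}\cdot t^{-1}\cdot(\text{Gaussian})$, i.e.\ the two extra powers of $\xi$ give a factor $t^{-1}$ on top of the base $t^{-1/2}$; after the convolution estimate removes one $t^{-1/2}$, we obtain $t^{-1}\|v\|_\infty \lesssim (1+t)^{-1}\|v\|_\infty$. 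The left derivatives $\partial_\zeta^l$ only improve this, so stating the bound without $l$-dependent decay is a (sufficient) simplification. The main obstacle, and the step requiring the most care, is the bookkeeping in the repeated integration by parts for general $j, l, m$: one must check that every term generated either (i) carries enough powers of $\xi$ to furnish the advertised decay, or (ii) has a factor $\chi'(t)$ or $\chi''(t)$, etc., confining it to $t\in[1,2]$ where boundedness is trivial, or (iii) consists of a bounded multiplier (a derivative of $\widetilde\Phi_\xi$ or of the correction term) times a lower-order piece. Once the pointwise Green's-function bounds are in hand, the passage to $L^\infty$-operator bounds is routine via the convolution estimate, just as in Propositions~\ref{prop:full} and~\ref{prop:semexp}.
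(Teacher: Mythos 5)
Your proposal follows essentially the same route as the paper: express $(\partial_t-a\partial_\zeta)^j\partial_\zeta^l\partial_{\bar\zeta}^m$ applied to $G_p^i$ and $G_r$ as oscillatory integrals, use $|\lambda_c(\xi)-\ri a\xi|\lesssim\xi^2$ and the $\mathcal{O}(\xi^2)$ eigenfunction expansion from Proposition~\ref{prop:speccons} to count powers of $\xi$, invoke Lemma~\ref{lem:semigroupEstimate1} for the pointwise-in-space bound (which is the polynomially weighted kernel $t^{-1/2}(1+(\zeta-\bar\zeta+at)^4/t^2)^{-1}$ rather than a Gaussian, though this changes nothing since its spatial integral is still $\mathcal{O}(\sqrt t)$), treat $t\in[1,2]$ by boundedness and $t\in[0,1]$ by the vanishing of $\chi$, and finish with the $L^1$-$L^\infty$ convolution estimate. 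The only slight inaccuracy is the remark that left derivatives ``only improve'' the $S_r$ bound — they may also hit the $\zeta$-dependent factor $\Phi_\xi-\phi_0'-\ri k_0\xi\,\partial_k\phi(\cdot;k_0)$, whose derivatives are still only $\mathcal{O}(\xi^2)$, so the decay is preserved but not improved — which is exactly what \eqref{e:semibrem} claims, so the argument is correct.
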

\begin{proof}
First, we compute
\begin{align*}
\left(\partial_t - a\partial_\zeta\right)^j \partial_\zeta^l \partial_\xt^m G_p^i(\xx,\xt,t) = \frac{\chi(t)}{2\pi} \int_{-\pi}^{\pi} \rho(\xi) \left(\ri \xi\right)^l \left(\lambda_c(\xi) - \ri a\xi\right)^j \re^{\lambda_c(\xi) t} \partial_\xt^m \left(\re^{\ri\xi(\xx - \xt)} \partial_\xt^i \widetilde{\Phi}_\xi(\xt)^*\right) \de \xi,
\end{align*}
for $\xx,\xt \in \R$ and $t \geq 2$, where we recall $\chi(t) = 1$ for $t \in [2,\infty)$. Next, we note that by Proposition~\ref{prop:speccons} and the embedding $H^1_{\mathrm{per}}(\R) \hookrightarrow L^\infty(\R)$, it holds
\begin{align*} \left|\lambda_c(\xi) - \ri a \xi\right| \lesssim \xi^2, \qquad \left\|\partial_\xx^l \left(\Phi_\xi - \phi_0' - \ri k_0 \xi \partial_k \phi(\cdot;k_0)\right)\right\|_\infty \lesssim \xi^2,\end{align*}
for $\xi \in (-\xi_0,\xi_0)$. Hence, Proposition~\ref{prop:speccons} and Lemma~\ref{lem:semigroupEstimate1} imply that the following pointwise estimates
\begin{align*}
\left|\left(\partial_t - a\partial_\zeta\right)^j \partial_\zeta^l \partial_\xt^m G_p^i(\xx,\xt,t)\right| &\lesssim t^{-\frac{1+2j+l}{2}}\left(1 + \frac{(\xx-\xt + a t)^4}{t^2}\right)^{-1}, & & t \in [2,\infty),\\
\left|\left(\partial_t - a\partial_\zeta\right)^j \partial_\zeta^l \partial_\xt^m G_p^i(\xx,\xt,t)\right| &\lesssim t^{-\frac{1}{2}}\left(1 + \frac{(\xx-\xt + a t)^4}{t^2}\right)^{-1}, & & t \in [1,2],\\
\left|\partial_\zeta^l \partial_\xt^m G_r(\xx,\xt,t)\right| &\lesssim t^{-\frac{3}{2}}\left(1 + \frac{(\xx-\xt + a t)^4}{t^2}\right)^{-1}, & & t \in [1,\infty),
\end{align*}
hold for $\xx,\xt \in \R$. Using the first pointwise estimate above, we integrate by parts to arrive at
\begin{align*}
\left\|\left(\partial_t - a\partial_\zeta\right)^j \partial_\zeta^l S_p^i(t) \partial_\zeta^m v\right\|_\infty &= \left\|\int_\R \left(\partial_t - a\partial_\zeta\right)^j \partial_\xx^l \partial_\xt^m G_p^i(\cdot,\xt,t) v(\xt) \de \xt\right\|_\infty\\ 
&\lesssim t^{-\frac{1+2j+l}{2}} \|v\|_\infty \int_\R \left(1+\frac{z^4}{t^2}\right)^{-1} \de z \lesssim t^{-\frac{2j+l}{2}} \|v\|_\infty, 
\end{align*}
which yields~\eqref{e:semibprin} for $t \geq 2$. The estimates~\eqref{e:semibprin} for $t \in [1,2]$ and~\eqref{e:semibrem} for $t \geq 1$ follow analogously by integrating the second and third pointwise Green's function estimate, respectively. Finally, recalling that $\chi(t)$ vanishes on $[0,1]$, the inequalities~\eqref{e:semibprin} and~\eqref{e:semibrem} are trivially satisfied for $t \in [0,1]$.
\end{proof}

\subsection{Estimates on the full semigroup acting between \texorpdfstring{$C_{\mathrm{ub}}^j(\R)$}{C\_ub\^j(R)}-spaces}

To control higher derivatives of the unmodulated perturbation in our nonlinear analysis we need estimates on the semigroup $\re^{\El_0 t}$, as an operator from the space $C_{\mathrm{ub}}^{2j}(\R)$ into $C_{\mathrm{ub}}^{2j-l}(\R)$ for $j,l \in \mathbb N_0$ with $0 \leq l \leq 2j$. Such estimates readily follow from Propositions~\ref{prop:full},~\ref{prop:semexp} and~\ref{prop:semdif} by applying standard analytic semigroup theory. 

\begin{corollary} \label{cor:der}
Assume~\ref{assH1} and~\ref{assD1}-\ref{assD3}. Let $j \in \mathbb N$ and $m \in \mathbb N_0$. It holds
\begin{align*}
\left\|\re^{\El_0 t} v\right\|_{W^{2j,\infty}} &\lesssim \left(1+t^{-1}\right)\|v\|_{W^{2j-2,\infty}}, & & v \in C_{\mathrm{ub}}^{2j-2}(\R),\\
\left\|\re^{\El_0 t} \partial_\xx^m v\right\|_{W^{2j,\infty}} &\lesssim  \left(1+t\right)^{-\frac{11}{10}} \left(1+t^{-\frac{1}{2}}\right)\|v\|_{W^{2j+m-1,\infty}} + \|v\|_\infty, & & v \in C_{\mathrm{ub}}^{2j+m-1}(\R),\\
\left\|\re^{\El_0 t} v\right\|_{W^{2j,\infty}} &\lesssim \|v\|_{W^{2j,\infty}}, & & v \in C_{\mathrm{ub}}^{2j}(\R),
\end{align*}
for $t > 0$.
\end{corollary}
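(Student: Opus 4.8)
The plan is to reduce everything to the three semigroup estimates already in hand---Proposition~\ref{prop:full}, Proposition~\ref{prop:semexp} and Proposition~\ref{prop:semdif}---by combining them with the abstract smoothing properties of the analytic semigroup generated by the sectorial operator $\El_0$ on $C_{\mathrm{ub}}(\R)$. The key structural fact is that $\El_0$ has domain $C_{\mathrm{ub}}^2(\R)$ and, more generally, its fractional/integer powers have domains comparable to the spaces $C_{\mathrm{ub}}^{2j}(\R)$; equivalently, the $W^{2j,\infty}$-norm of a function is controlled (up to lower-order terms absorbed by interpolation) by $\|\El_0^j v\|_\infty + \|v\|_\infty$, and $\El_0^j$ commutes with $\re^{\El_0 t}$. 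I would cite~\cite{LUN} for the sectoriality of $\El_0$ and for the characterization of the domains of its powers, together with the standard parabolic smoothing bound $\|\El_0^j \re^{\El_0 t}\|_{C_{\mathrm{ub}}\to C_{\mathrm{ub}}} \lesssim (1+t^{-j})$.

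For the first inequality, write $\re^{\El_0 t} = \re^{\El_0 t/2}\re^{\El_0 t/2}$, apply $\El_0^j$, and distribute one power $\El_0^{j-1}$ onto the $W^{2j-2,\infty}$-data (legitimate since $v \in C_{\mathrm{ub}}^{2j-2}(\R) = D(\El_0^{j-1})$) and the remaining single power $\El_0$ onto one of the half-time propagators; the latter contributes the $t^{-1}$ factor via the smoothing bound, while the former and the remaining propagator are bounded operators, and the $W^{2j,\infty}$-norm on the left is recovered from $\|\El_0^j \re^{\El_0 t} v\|_\infty + \|\re^{\El_0 t} v\|_\infty$ using the domain characterization. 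For the third inequality one simply uses that $\re^{\El_0 t}$ commutes with $\El_0^j$ and is uniformly bounded on $C_{\mathrm{ub}}(\R)$ by Proposition~\ref{prop:full} (with $j=l=0$), so $\|\re^{\El_0 t} v\|_{W^{2j,\infty}} \lesssim \|\El_0^j \re^{\El_0 t} v\|_\infty + \|\re^{\El_0 t}v\|_\infty = \|\re^{\El_0 t}\El_0^j v\|_\infty + \|\re^{\El_0 t}v\|_\infty \lesssim \|v\|_{W^{2j,\infty}}$.

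The second inequality is the one that really needs the fine semigroup decomposition~\eqref{e:semidecomp}, because here we must extract the factor $(1+t)^{-11/10}$ and a $t^{-1/2}$ from moving a single spatial derivative across, and the naive bound $(1+t^{-1})$ from parabolic smoothing alone is not enough. The plan is: apply $\El_0^j$ to $\re^{\El_0 t}\partial_\xx^m v$, split $\re^{\El_0 t} = \widetilde S(t) + (\phi_0' + k_0 \partial_k\phi(\cdot;k_0)\partial_\xx)S_p^0(t)$, and treat the two pieces separately. On $\widetilde S(t) = S_r(t) + S_e(t)$ use Proposition~\ref{prop:semexp} for $S_e(t)$ directly on the derivatives (it already carries $(1+t)^{-11/10}(1+t^{-(j+l)/2})$) and Proposition~\ref{prop:semdif} for $S_r(t)$, after first pushing all derivatives onto the propagators via the commutator identities~\eqref{e:commu} and splitting time as $t/2+t/2$ so that $j$ powers of $\El_0 \sim \partial_\xx^2$ cost at most $(1+t^{-j})$ on the data side while the propagator piece supplies the decay; for the principal part $(\phi_0' + \cdots\partial_\xx)S_p^0(t)$ one uses estimate~\eqref{e:semibprin} with appropriate $j,l,m$, noting that one spatial derivative on the left of $S_p^0$ buys a factor $t^{-1/2}$, that the $\phi_0'$ prefactor is a bounded multiplier, and that the single $\partial_\xx^m$ on the right is absorbed harmlessly by~\eqref{e:semibprin} (which is uniform in $m$). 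The $\|v\|_\infty$ summand on the right-hand side is exactly what survives from the principal part when no derivative decay is available at small $t$, i.e.~from the $\chi(t)$-cutoff making $S_p^0(t)$ trivial on $[0,1]$ and from handling $t \le 1$ via Proposition~\ref{prop:full}. \textbf{The main obstacle} I anticipate is bookkeeping rather than conceptual: one must verify that the number of spatial derivatives one is forced to commute past $S_p^0(t)$ and $S_r(t)$ (to use~\eqref{e:semibprin}--\eqref{e:semibrem}, which only decay under \emph{left} derivatives) never exceeds what the data regularity $v \in C_{\mathrm{ub}}^{2j+m-1}(\R)$ permits, and that the leftover derivatives landing on the data account precisely for the $W^{2j+m-1,\infty}$-norm appearing on the right; the $-1$ in that exponent reflects exactly the one derivative that gets converted into the $t^{-1/2}$ decay factor through~\eqref{e:commu} and~\eqref{e:semibprin}, so the power counting has to be done carefully but is forced by the structure.
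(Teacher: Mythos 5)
Your handling of the first and third inequalities is essentially the paper's own argument (equivalence of the graph norm on $D(\El_0^j)=C_{\mathrm{ub}}^{2j}(\R)$ with the $W^{2j,\infty}$-norm, commuting $\El_0^j$ with the semigroup, and the smoothing bound made $t$-uniform by combining it with Proposition~\ref{prop:full}), and is fine. The genuine gap is in the second inequality, and it sits precisely where that estimate gets its peculiar shape. In your setup $\El_0^j$ is applied \emph{after} the splitting $\re^{\El_0 t}=\widetilde S(t)+\big(\phi_0'+k_0\partial_k\phi(\cdot;k_0)\partial_\xx\big)S_p^0(t)$, so up to $2j$ spatial derivatives land on $S_e(t)$; but Proposition~\ref{prop:semexp} only controls $\partial_\xx^j S_e(t)\partial_\xx^l$ for $j+l\le 1$, and neither device you invoke repairs this: the commutators~\eqref{e:commu} concern $S_p^0$, not $S_e$, and ``splitting time as $t/2+t/2$'' is unavailable because $S_e$, $S_r$, $S_p^i$ are not semigroups. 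By contrast, the pieces you flag as the obstacle are the harmless ones: \eqref{e:semibprin}--\eqref{e:semibrem} already tolerate arbitrarily many left \emph{and} right derivatives with only $\|v\|_\infty$ on the right-hand side, which is exactly how the bare $\|v\|_\infty$ summand arises (it is the large-time contribution of the critical part, not a small-time effect of the cutoff), whereas the factor $t^{-1/2}$ comes from a \emph{single} derivative on $S_e$, i.e.\ on the full semigroup for $t\le 1$, not from~\eqref{e:commu}/\eqref{e:semibprin}. Finally, the accounting ``$j$ powers of $\El_0$ cost at most $(1+t^{-j})$ on the data side'' cannot close: it either produces a small-time singularity $t^{-j}$, incompatible with the claimed $(1+t^{-\frac12})$, or (if the derivatives stay on the propagator) is unsupported by any stated estimate.

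The paper's proof avoids all of this by commuting first and extracting exactly one derivative algebraically: $\El_0^j\re^{\El_0 t}\partial_\xx^m v=\re^{\El_0 t}\El_0^j\partial_\xx^m v$ together with $\El_0^j\partial_\xx^m v=\partial_\xx\big(k_0^2D\partial_\xx\El_0^{j-1}\partial_\xx^m v\big)+\big(\El_0-k_0^2D\partial_\xx^2\big)\El_0^{j-1}\partial_\xx^m v$, so every argument contains at most $2j+m-1$ derivatives of $v$ and at most one derivative remains outside. Then $S_e$ with that single left derivative yields, via Proposition~\ref{prop:semexp}, the term $(1+t)^{-\frac{11}{10}}\big(1+t^{-\frac{1}{2}}\big)\|v\|_{W^{2j+m-1,\infty}}$, the $S_c$-contributions are bounded by $\|v\|_\infty$ alone through Proposition~\ref{prop:semdif} (all derivatives being absorbed by the smooth periodic kernel), and a density argument lowers the regularity from $C_{\mathrm{ub}}^{2j+m}(\R)$, needed for the intermediate manipulations, to the stated $C_{\mathrm{ub}}^{2j+m-1}(\R)$ --- a step your plan also omits. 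Without the one-derivative extraction, the correct routing of derivatives away from $S_e$, and the density step, the power counting you describe does not produce the asserted estimate.
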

\begin{proof}
Standard analytic semigroup theory, cf.~\cite[Proposition~2.2.1]{LUN}, yields a constant $\mu \in \R$ such that
\begin{align} \left\|\El_0 \re^{\El_0 t} v\right\|_\infty \lesssim t^{-1}\re^{\mu t}\|v\|_\infty, \label{eq:sem1}\end{align}
for $t > 0$ and $v \in C_{\mathrm{ub}}(\R)$. Noting that the graph norm on the space $D(\El_0^j) = C_{\mathrm{ub}}^{2j}(\R)$ is equivalent to its $W^{2j,\infty}$-norm, we write 
\begin{align*} \left\|\re^{\El_0 t} v\right\|_{W^{2j,\infty}} \lesssim \left\|\El_0^j \re^{\El_0 t} v\right\|_{\infty} + \left\|\re^{\El_0 t} v\right\|_{\infty} = \left\|\El_0 \re^{\El_0 t} \El_0^{j-1} v\right\|_{\infty} + \left\|\re^{\El_0 t} v\right\|_{\infty},
\end{align*}
for $v \in C_{\mathrm{ub}}^{2j-2}(\R)$ and $t > 0$, where we use that $\El_0$ and $\re^{\El_0 t}$ commute. We use the semigroup property, estimate~\eqref{eq:sem1} and Proposition~\ref{prop:full} to
estimate
\begin{align*}
\left\|\El_0 \re^{\El_0 t} \El_0^{j-1} v\right\|_{\infty} &\lesssim t^{-1} \left\|\El_0^{j-1} v\right\|_{\infty},  & & t \in (0,1],\\
\left\|\El_0 \re^{\El_0 t} \El_0^{j-1} v\right\|_{\infty} &= \left\|\El_0 \re^{\El_0} \re^{\El_0(t-1)} \El_0^{j-1} v\right\|_{\infty}  \lesssim t^{-1} \left\|\re^{\El_0(t-1)} \El_0^{j-1} v\right\|_{\infty} \lesssim t^{-1} \left\|\El_0^{j-1} v\right\|_{\infty}, & & t > 1,
\end{align*}
for $v \in C_{\mathrm{ub}}^{2j-2}(\R)$. So, combining the last estimates and applying Proposition~\ref{prop:full}, we arrive at 
\begin{align*} 
\left\|\re^{\El_0 t} v\right\|_{W^{2j,\infty}} \lesssim
\left(1+t^{-1}\right)\left( \left\|\El_0^{j-1} v\right\|_{\infty} + \|v\|_\infty\right) \lesssim \left(1+t^{-1}\right)\|v\|_{W^{2j-2,\infty}},
\end{align*}
for $t > 0$ and $v \in C_{\mathrm{ub}}^{2j-2}(\R)$, which yields the first result.

For the second estimate we apply Propositions~\ref{prop:full},~\ref{prop:semexp} and~\ref{prop:semdif}, recall the decompositions~\eqref{e:semidecomp} and~\eqref{e:semdecomp} and use the equivalence of norms on $D(\El_0^j) = C_{\mathrm{ub}}^{2j}(\R)$ to bound
\begin{align*}
&\left\|\re^{\El_0 t} \partial_\xx^m v\right\|_{W^{2j,\infty}} \lesssim \left\|\re^{\El_0 t} \El_0^j \partial_\xx^m v\right\|_{\infty} + \left\|\re^{\El_0 t} \partial_\xx^m v\right\|_{\infty} \\ &\ \quad \leq \left\|\re^{\El_0 t} \partial_\xx k_0^2 D\partial_\xx \El_0^{j-1} \partial_\xx^mv\right\|_\infty + \left\|\re^{\El_0 t} \left(k_0^2D\partial_\xx^2 - \El_0\right)\El_0^{j-1} \partial_\xx^mv\right\|_{\infty} + \left\|\re^{\El_0 t} \partial_\xx^mv\right\|_{\infty}\\ &\ \quad \leq
\left\|S_e(t) \partial_\xx k_0^2 D\partial_\xx \El_0^{j-1} \partial_\xx^mv\right\|_\infty + \left\|S_e(t) \left(k_0^2D\partial_\xx^2 - \El_0\right)\El_0^{j-1} \partial_\xx^mv\right\|_{\infty} + \left\|S_e(t) \partial_\xx^m v\right\|_{\infty} \\
&\qquad \ \quad + \, \left\|S_c(t) \partial_\xx k_0^2 D\partial_\xx \El_0^{j-1} \partial_\xx^m v\right\|_\infty + \left\|S_c(t) \left(k_0^2D\partial_\xx^2 - \El_0\right)\El_0^{j-1} \partial_\xx^mv\right\|_{\infty} + \left\|S_c(t) \partial_\xx^m v\right\|_{\infty}\\
&\quad \ \lesssim \left(1+t\right)^{-\frac{11}{10}} \left(\left(1+t^{-\frac{1}{2}}\right) \left\|D\partial_\xx \El_0^{j-1} \partial_\xx^m v\right\|_\infty + \left\|\left(k_0^2D\partial_\xx^2 - \El_0\right)\El_0^{j-1} \partial_\xx^m v\right\|_{\infty} + \left\|\partial_\xx^m v\right\|_\infty\right) + \left\|v\right\|_{\infty} \\
&\ \quad \lesssim \left(1+t\right)^{-\frac{11}{10}} \left(1+t^{-\frac{1}{2}}\right)\|v\|_{W^{2j+m-1,\infty}} + \|v\|_\infty,
\end{align*}
for $t > 0$ and $v \in \smash{C_{\mathrm{ub}}^{2j+m}(\R)}$. Since the left and right hand side of the last chain of inequalities only requires $v \in \smash{C_{\mathrm{ub}}^{2j+m-1}(\R)}$, the second estimate follows immediately by a density argument.

Finally, for the last estimate we use the equivalence of norms on $\smash{D(\El_0^j) = C_{\mathrm{ub}}^{2j}(\R)}$ and Proposition~\ref{prop:full} to bound 
\begin{align*}
&\left\|\re^{\El_0 t} v\right\|_{W^{2j,\infty}} \lesssim \left\|\re^{\El_0 t} \El_0^j v\right\|_{\infty} + \left\|\re^{\El_0 t} v\right\|_{\infty} \lesssim \left\|\El_0^j v\right\|_\infty + \|v\|_\infty \lesssim \|v\|_{W^{2j,\infty}}
\end{align*}
for $t > 0$ and $v \in C_{\mathrm{ub}}^{2j}(\R)$.
\end{proof}

\subsection{Relation to the convective heat semigroup} \label{sec:decompprin}

We wish to relate the principal component $G_p^i(\xx,\xt,t)$ to the temporal Green's function corresponding to the convective heat equation $\partial_t u = du_{\xx\xx} + au_\xx$. Thus, we approximate $\lambda_c(\xi)$ by $\ri a\xi-d\xi^2$ and factor out the adjoint eigenfunction $
\smash{\widetilde{\Phi}_\xi(\xt)}$ in~\eqref{e:defGP} by approximating it by $\smash{\widetilde{\Phi}_0(\xt)}$ so that it no longer depends on variable $\xi$ and can be pulled out of the integral.  All in all, we establish the decomposition
\begin{align*}
G_p^i(\xx,\xt,t) = H(\xx-\xt,t)\partial_\xt^i \widetilde{\Phi}_0(\xt)^* + \widetilde{G}_r^i(\xx,\xt,t),
\end{align*}
for $\xx,\xt \in \R$, $t > 0$ and $i = 0,1,2$, where $H(\xx,t)$ is the convective heat kernel
\begin{align*}
H(\xx,t) &= \frac{1}{2\pi} \int_\R \re^{\ri\xi\xx + \left(\ri a\xi - d \xi^2\right)t} \de \xi = \frac{\re^{-\frac{|\xx+a t|^2}{4d t}}}{\sqrt{4 \pi d t}}, 
\end{align*}
and the remainder $\widetilde{G}_r^i(\xx,\xt,t)$ is given by
\begin{align*}
\widetilde{G}_r^i(\xx,\xt,t) &= \frac{1}{2\pi} \int_{\R} \re^{\ri\xi(\xx - \xt)+ \left(\ri a\xi - d \xi^2\right)t} \left(\chi(t)\rho(\xi)\re^{\lambda_r(\xi) t}  \partial_\xt^i \widetilde{\Phi}_\xi(\xt)^* - \partial_\xt^i \widetilde{\Phi}_0(\xt)^*\right) \de \xi,
\end{align*}
with $\lambda_r(\xi) := \lambda_c(\xi) - \ri a\xi + d \xi^2$. We introduce the corresponding propagators
\begin{align} \label{e:Shexpr}\widetilde{S}_r^i(t)v(\zeta) = \int_\R \widetilde{G}_r^i(\xx,\xt,t) v(\xt) \de \xt, \qquad S_h^i(t)v(\zeta) = \int_\R H(\xx-\xt,t) \partial_\xt^i \widetilde{\Phi}_0(\xt)^* v(\xt) \de \xt,\end{align}
and obtain the decomposition
\begin{align} \label{e:prindecomp}
S_p^i(t) = S_h^i(t) + \widetilde{S}_r^i(t), \qquad t \geq 0,
\end{align}
of the principal component. We prove that $\widetilde{S}_r^i(t)$ indeed exhibits higher algebraic decay rates, whereas $S_h^i(t)$ enjoys the same bounds as the convective heat semigroup 
\begin{align}
\re^{\left(d\partial_\xx^2 + a\partial_\xx\right)t} v(\xx) = \int_\R H(\xx-\xt,t)v(\xt)\de \xt, \qquad v \in C_{\mathrm{ub}}(\R), \, t > 0. \label{e:convheatsem}
\end{align}

\begin{proposition} \label{prop:semiprinrec}
Assume~\ref{assH1} and~\ref{assD1}-\ref{assD3}. Let $l \in \mathbb N_0$ and $i,m \in \{0,1,2\}$. Then, the estimates
\begin{align} \label{e:semiheat}
\left\|\partial_\zeta^l S_h^i(t)v\right\|_{\infty} \lesssim t^{-\frac{l}{2}}\|v\|_{\infty}, \quad & \quad  \left\|\partial_\zeta^l \re^{\left(d\partial_\xx^2 + a\partial_\xx\right) t}v\right\|_{\infty} \lesssim t^{-\frac{l}{2}}\|v\|_{\infty},\\
\left\|\partial_\zeta^m \widetilde{S}_r^i(t)v\right\|_\infty &\lesssim (1+t)^{-\frac{1}{2}}t^{-\frac{m}{2}}\|v\|_{\infty}, \label{e:semibrem2}
\end{align}
are satisfied for $v \in C_{\mathrm{ub}}(\R)$ and $t > 0$. Moreover, we have
\begin{align} \label{e:semiheat2}
\left\|\partial_\zeta S_h^i(t)v\right\|_{\infty} \lesssim (1+t)^{-\frac{1}{2}}\|v\|_{W^{1,\infty}}, \quad & \quad \left\|\partial_\zeta \re^{\left(d\partial_\xx^2 + a\partial_\xx\right) t}v\right\|_{\infty} \lesssim (1+t)^{-\frac{1}{2}}\|v\|_{W^{1,\infty}},\\
\left\|\partial_\zeta \widetilde{S}_r^i(t)v\right\|_\infty &\lesssim (1+t)^{-1}\|v\|_{W^{1,\infty}}, \label{e:semibrem3}
\end{align}
for $v \in C_{\mathrm{ub}}^1(\R)$ and $t \geq 0$.
\end{proposition}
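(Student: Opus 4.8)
The plan is to derive all six estimates from explicit Gaussian bounds on the kernels together with the pointwise control of $\lambda_r(\xi)$ and $\widetilde\Phi_\xi$ provided by Proposition~\ref{prop:speccons} and Lemma~\ref{lem:semigroupEstimate1}. First I would dispatch the heat-semigroup estimates on the right-hand sides of \eqref{e:semiheat}--\eqref{e:semiheat2}: since $\re^{(d\partial_\xx^2+a\partial_\xx)t}$ has kernel $H(\xx-\xt,t)$, differentiating $H$ in $\xx$ produces factors $(\xx-\xt+at)/t$ against the Gaussian, so $\|\partial_\xx^l H(\cdot,t)\|_{L^1} \lesssim t^{-l/2}$, giving the $t^{-l/2}$ bound by Young's inequality; the bound $(1+t)^{-1/2}\|v\|_{W^{1,\infty}}$ for $l=1$ follows by moving the derivative onto $v$ for $t\le 1$ (where $\|\partial_\xx H(\cdot,t)\|_{L^1}$ is unbounded) and keeping it on $H$ for $t\ge 1$. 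For $S_h^i(t)$ the kernel is $H(\xx-\xt,t)\partial_\xt^i\widetilde\Phi_0(\xt)^*$; since $\widetilde\Phi_0\in H^2_{\mathrm{per}}(0,1)\hookrightarrow C^1_{\mathrm{ub}}$ (for $i\le 1$) and more generally $\partial_\xt^i\widetilde\Phi_0$ is bounded for $i\le 2$ by the bootstrapping in Proposition~\ref{prop:speccons}, the weight is a bounded multiplier and the same $L^1$-$L^\infty$ convolution estimates apply, noting that $\partial_\xx$ acts only on the $H$-factor. The $W^{1,\infty}$-version needs the integration-by-parts trick as above, using that $\partial_\xt(\partial_\xt^i\widetilde\Phi_0)$ is bounded for $i\le 1$.

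The substance is in the remainder $\widetilde S_r^i(t)$. Here I would first handle the easy range $t\in[0,2]$: for $t\le 1$ the cutoff $\chi(t)$ vanishes, so $\widetilde G_r^i(\xx,\xt,t) = -\frac{1}{2\pi}\int_\R \re^{\ri\xi(\xx-\xt)+(\ri a\xi-d\xi^2)t}\partial_\xt^i\widetilde\Phi_0(\xt)^*\,\de\xi = -H(\xx-\xt,t)\partial_\xt^i\widetilde\Phi_0(\xt)^*$, which is $-S_h^i(t)$ composed with the identity, so the bound \eqref{e:semibrem2} for $t\in(0,1]$ reduces to the $S_h^i$ bound just proved (with $(1+t)^{-1/2}\sim 1$), and for $t\in[1,2]$ the integral is over a bounded range in $t$ with $\xi$-support in $(-\xi_0,\xi_0)$ so everything is uniformly bounded. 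For $t\ge 2$ I would write the $\xi$-integrand as the difference of two pieces: $\rho(\xi)\re^{\lambda_r(\xi)t}\partial_\xt^i\widetilde\Phi_\xi(\xt)^*$ and $\partial_\xt^i\widetilde\Phi_0(\xt)^*$, against the common Gaussian weight $\re^{\ri\xi(\xx-\xt)+(\ri a\xi-d\xi^2)t}$. The second piece contributes exactly $-H(\xx-\xt,t)\partial_\xt^i\widetilde\Phi_0(\xt)^*$ restricted by $1-\rho$... no — more carefully, the decomposition $S_p^i = S_h^i + \widetilde S_r^i$ was built so that $G_p^i - H\cdot\partial_\xt^i\widetilde\Phi_0^* = \widetilde G_r^i$, and the key is that $G_p^i$ and $H\cdot\partial_\xt^i\widetilde\Phi_0^*$ agree to leading order in $\xi$. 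Concretely, the bracketed factor in $\widetilde G_r^i$, namely $\chi(t)\rho(\xi)\re^{\lambda_r(\xi)t}\partial_\xt^i\widetilde\Phi_\xi(\xt)^* - \partial_\xt^i\widetilde\Phi_0(\xt)^*$, is for $t\ge 2$ equal to $\rho(\xi)\big(\re^{\lambda_r(\xi)t} - 1\big)\partial_\xt^i\widetilde\Phi_\xi(\xt)^* + \big(\rho(\xi)-1\big)\partial_\xt^i\widetilde\Phi_0(\xt)^* + \rho(\xi)\big(\partial_\xt^i\widetilde\Phi_\xi(\xt)^* - \partial_\xt^i\widetilde\Phi_0(\xt)^*\big)$. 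The middle term is supported in $|\xi|\ge\xi_0/2$, where the Gaussian factor $\re^{-d\xi^2 t}$ contributes $\re^{-c t}$; the first and third terms carry an extra factor $O(\xi^2 t)$ and $O(\xi)$ respectively near $\xi=0$ — using $|\re^{\lambda_r(\xi)t}-1|\lesssim |\lambda_r(\xi)|t\,\re^{\Re\lambda_r(\xi)t}\lesssim \xi^2 t\,\re^{c\xi^2 t}$ (absorbing the growth since $\Re\lambda_r$ is controlled relative to $d\xi^2$) and $\|\partial_\xt^i(\widetilde\Phi_\xi - \widetilde\Phi_0)\|_{L^\infty(0,1)}\lesssim|\xi|$ from the analyticity in Proposition~\ref{prop:speccons}.

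Then, exactly as in the proofs of Propositions~\ref{prop:semexp} and~\ref{prop:semdif}, I would invoke Lemma~\ref{lem:semigroupEstimate1} on each resulting oscillatory integral: an integrand of the form $\rho(\xi)\xi^p\,\re^{\ri\xi(\xx-\xt)}\re^{-d\xi^2 t}(\ldots)$ with $p\ge 1$ produces a pointwise bound $t^{-(1+p)/2}(1+(\xx-\xt+at)^4/t^2)^{-1}$ on $\widetilde G_r^i$, and integrating this Gaussian-like tail in $\xt$ gains a factor $t^{1/2}$, yielding the net decay $t^{-p/2}$; with $p\ge 1$ this beats $H$ by $t^{-1/2}$, giving \eqref{e:semibrem2} for $m=0$. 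Spatial derivatives $\partial_\xx^m$ land on the $\re^{\ri\xi(\xx-\xt)}$ factor producing an extra $\xi^m$, hence the extra $t^{-m/2}$; and $\partial_\xt^m$ can be integrated by parts onto the rest of the integrand as in Proposition~\ref{prop:semdif}, which is why only $C_{\mathrm{ub}}(\R)$-regularity of $v$ is needed for \eqref{e:semibrem2}. Finally \eqref{e:semibrem3}: for $t\ge 1$ keep the $\partial_\xx$ on the kernel, giving $(1+t)^{-1}$ directly from the $m=1$ case of \eqref{e:semibrem2} with the improved $(1+t)^{-1/2}$ prefactor; for $t\le 1$ move the derivative onto $v$ and use the $m=0$ bound with $\|v\|_{W^{1,\infty}}$, noting $(1+t)^{-1/2}\sim(1+t)^{-1}\sim 1$ there. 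The main obstacle is the bookkeeping for $t\ge 2$ — isolating the three error mechanisms (the $\re^{\lambda_r t}-1$ correction, the $\rho-1$ tail, the eigenfunction difference) and checking in each case that, after pulling out the requisite power of $\xi$, Lemma~\ref{lem:semigroupEstimate1} and the subsequent $\de\xt$-integration combine to give precisely the claimed $(1+t)^{-1/2}t^{-m/2}$, in particular that the $\re^{c\xi^2 t}$ growth from $\Re\lambda_r$ is genuinely dominated by $\re^{-d\xi^2 t}$ on the support of $\rho$ (which is where the smallness of $\xi_0$ and the expansion \eqref{e:eig_exp} enter). Everything else is the same $L^1$-$L^\infty$ convolution machinery already used repeatedly above.
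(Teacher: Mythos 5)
Your treatment of the heat-semigroup bounds \eqref{e:semiheat}, \eqref{e:semiheat2}, of $S_h^i(t)$, and of short times via $\widetilde S_r^i(t)=-S_h^i(t)$ for $t\le 1$ coincides with the paper's argument, and your three-way splitting of the bracket for $t\ge 2$ (the $\re^{\lambda_r t}-1$ correction, the eigenfunction difference, the $\rho-1$ tail) is essentially the paper's decomposition. However, there are two genuine problems in the large-time part. First, your bound on the first mechanism is too weak: you claim $|\re^{\lambda_r(\xi)t}-1|\lesssim|\lambda_r(\xi)|t\,\re^{\Re\lambda_r(\xi)t}\lesssim \xi^2 t\,\re^{c\xi^2 t}$. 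Quite apart from the fact that $|\re^z-1|\lesssim |z|\re^{\Re z}$ is false in general (the correct elementary bound is $|\re^z-1|\le|z|\re^{|z|}$, which is what the paper uses), the quadratic estimate on $\lambda_r$ does not suffice: a factor $\xi^{m+2}t$ fed into Lemma~\ref{lem:semigroupEstimate1} gives a kernel bound $t\cdot t^{-\frac{m+3}{2}}$, hence after the $\de\xt$-integration an operator bound $t^{-\frac m2}$ only, i.e.\ exactly the $S_h^i$ rate and no gain of $(1+t)^{-\frac12}$, so \eqref{e:semibrem2} does not follow from your accounting (your later summary ``$p\ge 1$ beats $H$ by $t^{-1/2}$'' ignores the extra factor $t$ attached to this term). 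What is needed, and what \eqref{e:eig_exp} actually provides since $a$ and $d$ are by definition the first two Taylor coefficients of $\lambda_c$, is the cubic bound $|\lambda_r(\xi)|\lesssim|\xi|^3$; with $|\xi|^{m+3}t$ one gets $t\cdot t^{-\frac{m+4}{2}}\cdot t^{\frac12}=t^{-\frac{m+1}{2}}$, which is the claimed rate.

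Second, the $(\rho-1)$ tail is not covered by the tools you invoke. Lemma~\ref{lem:semigroupEstimate1} requires the $\xi$-integrand to be compactly supported in $(-\xi_0,\xi_0)$, whereas $\rho(\xi)-1$ equals $-1$ for $|\xi|>\xi_0$, so the lemma is inapplicable; and the observation that the Gaussian contributes $\re^{-ct}$ on $|\xi|\ge\xi_0/2$ only yields a bound on the kernel that is uniform in $\xx-\xt$, which does not give an $L^\infty\to L^\infty$ operator estimate — you need integrability of the kernel in $\xt$. This is precisely the point where the paper does extra work: it introduces an auxiliary symbol $\lambda_h$ agreeing with $\ri a\xi-d\xi^2$ for $|\xi|\ge\xi_0/2$ and satisfying $\Re\lambda_h(\xi)\le-\delta_0-\mu_0\xi^2$ on all of $\R$, splits the tail into a $\rho$-localized piece (handled by Lemma~\ref{lem:semigroupEstimate1} with an $\re^{-\delta_0 t}$ prefactor) and a whole-line Fourier multiplier handled by \cite[Lemma~A.2]{HDRS22}. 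Alternatively one can integrate by parts in $\xi$ over the whole line to manufacture decay in $\xx-\xt$, but some such argument must be supplied; as written this step is missing. (The same integrability issue is glossed over in your remark that for $t\in[1,2]$ ``everything is uniformly bounded''; there the paper uses the crude bound \eqref{e:approxadjoint2} together with Lemma~\ref{lem:semigroupEstimate1}, exactly as in Proposition~\ref{prop:semdif}, to retain the spatially decaying factor.)
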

\begin{proof}
First, we observe that~\eqref{e:semiheat} follows from a standard application of Young's convolution inequality. Indeed, one bounds
\begin{align*}
\left\|\partial_\xx^l S_h^i(t) v\right\|_\infty, \left\|\partial_\zeta^l \re^{\left(d\partial_\xx^2 + a\partial_\xx\right) t}v\right\|_\infty \lesssim \left\|\partial_\xx^l H(\cdot,t)\right\|_1 \left\|v\right\|_\infty \lesssim t^{-\frac{l}{2}} \|v\|_\infty,
\end{align*}
for $v \in C_{\mathrm{ub}}(\R)$ and $t > 0$. Integrating by parts, we obtain similarly
\begin{align*}
\left\|\partial_\xx S_h^i(t) v\right\|_\infty, \left\|\partial_\xx \re^{\left(d\partial_\xx^2 + a\partial_\xx\right) t}v\right\|_\infty \lesssim \left\|H(\cdot,t)\right\|_1 \left\|v\right\|_{W^{1,\infty}} \lesssim \|v\|_{W^{1,\infty}},
\end{align*}
for $v \in C_{\mathrm{ub}}^1(\R)$ and $t \geq 0$, which together with~\eqref{e:semiheat} implies~\eqref{e:semiheat2}.

Next, we prove~\eqref{e:semibrem2} and~\eqref{e:semibrem3}. We recall that $\chi(t)$ vanishes on $[0,1]$. Hence, we have $\smash{\widetilde{S}_r^i(t)} = -S_h^i(t)$ for $t \in (0,1]$. Therefore, estimates~\eqref{e:semiheat} and~\eqref{e:semiheat2} imply~\eqref{e:semibrem2} and~\eqref{e:semibrem3} for short times $t \in (0,1]$. So, all that remains is to establish~\eqref{e:semibrem2} and~\eqref{e:semibrem3} for large times $t \geq 1$. First, we note that for all $z \in \mathbb C$ we have 
\begin{align*} \left|\re^{z} - 1\right| \leq \re^{|z|} - 1 \leq |z|\re^{|z|}.
\end{align*}
Moreover, Proposition~\ref{prop:speccons} yields a constant $C > 0$ such that $|\lambda_r(\xi)| \leq C |\xi|^3$ for $\xi \in (-\xi_0,\xi_0)$. Hence, applying Proposition~\ref{prop:speccons}, using the embedding $H^1_{\mathrm{per}}(\R) \hookrightarrow L^\infty(\R)$ and recalling $\chi(t) = 1$ for $t \geq 2$, we arrive at
\begin{align} \begin{split} \left\|\chi(t)\re^{\lambda_r(\xi) t} \partial_\xx^i \widetilde{\Phi}_\xi^* - \partial_\xx^i \widetilde{\Phi}_0^*\right\|_\infty &\leq \left|\re^{\lambda_r(\xi) t} - 1\right| \left\|\partial_\xx^i \widetilde{\Phi}_\xi^*\right\|_\infty + \left\|\partial_\xx^i \left(\widetilde{\Phi}_\xi^* - \widetilde{\Phi}_0^*\right)\right\|_\infty\\ & \lesssim \left|\lambda_r(\xi)\right| t \re^{|\lambda_r(\xi)| t} + |\xi| \lesssim \left(|\xi|^3 t + |\xi|\right)\re^{\frac{d}{2} \xi^2 t},
\end{split} \label{e:approxadjoint}\end{align}
for $\xi \in (-\xi_0,\xi_0)$ and $t \geq 2$. On the other hand, we estimate
\begin{align} \left\|\chi(t)\re^{\lambda_r(\xi) t} \partial_\xx^i  \widetilde{\Phi}_\xi^* - \partial_\xx^i \widetilde{\Phi}_0^*\right\|_\infty \lesssim \left|\re^{\lambda_r(\xi) t}\right| \left\|\partial_\xx^i \widetilde{\Phi}_\xi^*\right\|_\infty + \left\|\partial_\xx^i \widetilde{\Phi}_0^*\right\|_\infty 
\lesssim 1 \lesssim (1+t)^{-\frac{1}{2}},  \label{e:approxadjoint2}\end{align}
for $\xi \in (-\xi_0,\xi_0)$ and $t \in [1,2]$ (taking $\xi_0$ smaller if necessary). Furthermore, there exist constants $\delta_0,\mu_0 > 0$ and a function $\lambda_h \in C^4\big(\R,\C\big)$ satisfying $\lambda_h(\xi) = \ri a\xi - d \xi^2$ for $\xi \in \R \setminus (-\frac{\xi_0}{2},\frac{\xi_0}{2})$, $\lambda_h'(0) \in \ri \R$ and $\Re \, \lambda_h(\xi) \leq -\delta_0 - \mu_0|\xi|^2$ for all $\xi \in \R$. Recalling that $\rho(\xi)-1$ vanishes on $(-\frac{\xi_0}{2},\frac{\xi_0}{2})$, we rewrite
\begin{align*}
\partial_\xx^m &\widetilde{G}_r^i(\xx,\xt,t) = \frac{1}{2\pi} \int_{\R}\re^{\ri\xi(\xx - \xt)+ \left(\ri a\xi - d \xi^2\right)t}  (\ri \xi)^m \rho(\xi)\left(\chi(t)\re^{\lambda_r(\xi) t} \partial_\xt^i  \widetilde{\Phi}_\xi(\xt)^* - \partial_\xt^i \widetilde{\Phi}_0(\xt)^*\right) \de \xi\\
&+\, \frac{\re^{-\delta_0 t}}{2\pi} \int_{\R}  \re^{\ri\xi(\xx - \xt)+ \left(\lambda_h(\xi) + \delta_0\right) t} (\ri \xi)^m\rho(\xi) \partial_\xt^i \widetilde{\Phi}_0(\xt)^* \de \xi - \frac{1}{2\pi} \int_{\R} \re^{\ri\xi(\xx - \xt)+ \lambda_h(\xi) t}(\ri \xi)^m \de \xi \partial_\xt^i \widetilde{\Phi}_0(\xt)^*,
\end{align*}
for $\xx,\xt \in \R$ and $t > 0$. We use Lemma~\ref{lem:semigroupEstimate1} and estimates~\eqref{e:approxadjoint} and~\eqref{e:approxadjoint2} to establish pointwise bounds on the first two terms on the right-hand side of the last equation. On the other hand, we apply~\cite[Lemma~A.2]{HDRS22} to estimate
\begin{align*} \left\|\frac{1}{2\pi} \int_{\R}\int_\R \re^{\ri\xi(\cdot - \xt)+ \lambda_h(\xi) t} (\ri \xi)^m \de \xi w(\xt)\de \xt\right\|_\infty \lesssim \re^{-\delta_0 t} \left(1 + t^{-\frac{m}{2}}\right) \|w\|_\infty,
\end{align*}
for $t > 0$ and $w \in C_{\mathrm{ub}}(\R)$. All in all, we obtain
\begin{align*}
\left\|\partial_\zeta^m \widetilde{S}_r^i(t)v\right\|_\infty &\lesssim \|v\|_\infty\left(\left(t^{-1-\frac{m}{2}} + \re^{-\delta_0 t} t^{-\frac{m+1}{2}}\right) \int_\R \left(1 + \frac{z^4}{t^2}\right)^{-1}\de z + \re^{-\delta_0 t} \left(1 + t^{-\frac{m}{2}}\right)\right)\\ 
&\lesssim (1+t)^{-\frac{1}{2}}t^{-\frac{m}{2}}\|v\|_{\infty},
\end{align*}
for $v \in C_{\mathrm{ub}}(\R)$ and $t \geq 1$.
\end{proof}

As explained in~\S\ref{sec:outlineproof}, we encounter a critical  term in the upcoming nonlinear stability analysis that cannot be controlled using iterative $L^\infty$-estimates. This term corresponds to a Hamilton-Jacobi nonlinearity of the form $f_p \gamma_\xx^2$, where $f_p \colon \R \to \R^n$ is $1$-periodic and $\gamma \colon \R \to \R$ is the phase modulation to be defined in~\S\ref{sec:modpert}. The following result allows us to isolate the critical term after which we can apply the Cole-Hopf transform to eliminate it, cf.~\S\ref{sec:derHamJac}. The result relies on an expansion of the $1$-periodic adjoint eigenfunction $\smash{\widetilde{\Phi}_0}$ as a Fourier series and yields a factorization of the leading-order action of the propagator $\smash{S_h^0(t)}$ on the product $f_p \smash{\gamma_\xx^2}$ into the coefficient $\smash{\langle \widetilde{\Phi}_0, f_p\rangle_{L^2(0,1)}}$ and the function obtained by the action of the convective heat semigroup $\smash{\re^{(d\partial_\xx^2 + a\partial_\xx) t}}$ on $\smash{\gamma_\xx^2}$.

\begin{proposition} \label{prop:semiapp}
Assume~\ref{assH1} and~\ref{assD1}-\ref{assD3}. Then, there exists a bounded linear operator $A_h \colon L^2_{\mathrm{per}}\big((0,1),\R^n\big) \to C(\R,\R)$ such that it holds
\begin{align*}
S_h^0(t)\left(gv\right) = \re^{\left(d\partial_\xx^2 + a\partial_\xx\right) t}\left(\langle \widetilde{\Phi}_0, g\rangle_{L^2(0,1)} v - A_h(g) \partial_\xx v\right) +  \partial_\xx \re^{\left(d\partial_\xx^2 + a\partial_\xx\right) t}\left(A_h(g) v\right),
\end{align*}
for $g \in L^2_{\mathrm{per}}((0,1),\R^n)$, $v \in C_{\mathrm{ub}}^1(\R,\R)$ and $t > 0$.
\end{proposition}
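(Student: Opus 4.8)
The plan is to exploit that, for fixed $g$, the map $\xt\mapsto\widetilde{\Phi}_0(\xt)^*g(\xt)$ is $1$-periodic, to split off its mean value, and then to reduce the claimed identity to a single integration by parts against the convective heat kernel $H$ together with linearity of the convective heat semigroup~\eqref{e:convheatsem}.

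Concretely, I would first set $h:=\widetilde{\Phi}_0^*g\in L^2_{\mathrm{per}}\big((0,1),\R\big)$, which satisfies $\|h\|_{L^2(0,1)}\lesssim\|g\|_{L^2(0,1)}$ since $\widetilde{\Phi}_0\in H^2_{\mathrm{per}}(0,1)\hookrightarrow L^\infty(\R)$, and observe $\int_0^1 h(\xt)\de\xt=\langle\widetilde{\Phi}_0,g\rangle_{L^2(0,1)}=:h_0$. Because $h$ is $1$-periodic with mean $h_0$, the absolutely continuous function $A_h(g)(\xx):=\int_0^\xx\big(h(s)-h_0\big)\de s$ is again $1$-periodic, its increment over one period being $\int_0^1(h-h_0)\de s=0$; it thus extends to a continuous $1$-periodic function on $\R$, satisfies $\partial_\xx A_h(g)=h-h_0$ almost everywhere and obeys $\|A_h(g)\|_\infty\leq\int_0^1|h-h_0|\de s\lesssim\|g\|_{L^2(0,1)}$. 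Hence $A_h\colon L^2_{\mathrm{per}}\big((0,1),\R^n\big)\to C(\R,\R)$ is a bounded linear operator, as claimed. (One could normalize $A_h(g)$ to have zero mean instead; this is immaterial, see the end of this proposal.)

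Next, I would insert the splitting $\widetilde{\Phi}_0(\xt)^*g(\xt)=h_0+\partial_\xt A_h(g)(\xt)$ into the defining formula $S_h^0(t)(gv)(\xx)=\int_\R H(\xx-\xt,t)\,\widetilde{\Phi}_0(\xt)^*g(\xt)\,v(\xt)\de\xt$ from~\eqref{e:Shexpr}. The $h_0$-term directly yields $h_0\,\re^{(d\partial_\xx^2+a\partial_\xx)t}v=\langle\widetilde{\Phi}_0,g\rangle_{L^2(0,1)}\,\re^{(d\partial_\xx^2+a\partial_\xx)t}v$. For the remaining term I integrate by parts in $\xt$; the boundary contributions vanish because $H(\xx-\cdot,t)$ is a Gaussian while $A_h(g)$ and $v$ are bounded, so the term equals $-\int_\R\partial_\xt\big(H(\xx-\xt,t)v(\xt)\big)A_h(g)(\xt)\de\xt$. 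Distributing the $\xt$-derivative and using $\partial_\xt H(\xx-\xt,t)=-\partial_\xx H(\xx-\xt,t)$, one piece becomes $\int_\R\partial_\xx H(\xx-\xt,t)A_h(g)(\xt)v(\xt)\de\xt=\partial_\xx\re^{(d\partial_\xx^2+a\partial_\xx)t}\big(A_h(g)v\big)(\xx)$ — pulling the $\xx$-derivative out of the integral is justified by dominated convergence, since $\partial_\xx H(\cdot,t)\in L^1(\R)$ and $A_h(g)v$ is bounded — while the other piece is $-\re^{(d\partial_\xx^2+a\partial_\xx)t}\big(A_h(g)\partial_\xx v\big)(\xx)$. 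Collecting the three contributions and using linearity of $\re^{(d\partial_\xx^2+a\partial_\xx)t}$ gives precisely the asserted formula; all convective heat semigroups appearing are well-defined since $A_h(g)v,\,A_h(g)\partial_\xx v\in C_{\mathrm{ub}}(\R)$.

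I do not anticipate a genuine obstacle; the argument is short. The two points deserving a line of care are: (i) that $A_h(g)$ is truly $1$-periodic, which rests on $h-h_0$ having zero mean over a period — exactly how $h_0$ is chosen; and (ii) the justification of the integration by parts and of differentiation under the integral sign, for which the Gaussian decay of $H$ and $\partial_\xx H$ together with $v\in C_{\mathrm{ub}}^1(\R)$ and $A_h(g)\in C_{\mathrm{ub}}(\R)$ are precisely sufficient. Finally, the identity is independent of the choice of primitive: replacing $A_h(g)$ by $A_h(g)+c$ alters $\partial_\xx\re^{(d\partial_\xx^2+a\partial_\xx)t}(A_h(g)v)$ by $c\,\partial_\xx\re^{(d\partial_\xx^2+a\partial_\xx)t}v$ and $-\re^{(d\partial_\xx^2+a\partial_\xx)t}(A_h(g)\partial_\xx v)$ by $-c\,\re^{(d\partial_\xx^2+a\partial_\xx)t}\partial_\xx v=-c\,\partial_\xx\re^{(d\partial_\xx^2+a\partial_\xx)t}v$, and these cancel.
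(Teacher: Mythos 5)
Your proposal is correct and follows essentially the same route as the paper: split $\widetilde{\Phi}_0^*g$ into its mean $\langle\widetilde{\Phi}_0,g\rangle_{L^2(0,1)}$ plus the derivative of a bounded $1$-periodic primitive $A_h(g)$, and integrate by parts against the convective heat kernel. The only (immaterial) difference is that the paper constructs $A_h(g)$ as the zero-mean Fourier series $\sum_{j\neq 0}\frac{\langle\widetilde{\Phi}_0^*g,\re^{2\pi\ri j\cdot}\rangle_{L^2(0,1)}}{2\pi\ri j}\re^{2\pi\ri j\xx}$ with boundedness via Bessel's inequality, whereas you define it directly as the periodic antiderivative; as you note, the identity is insensitive to the additive constant distinguishing the two choices.
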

\begin{proof}
We proceed as in~\cite[Lemma~3.3]{JONZW} and expand the $1$-periodic function $\widetilde{\Phi}_0^* g \in L^2_{\mathrm{per}}((0,1),\R)$ as a Fourier series to obtain
\begin{align*}
&S_h^0(t)\left(gv\right)(\zeta) = \int_\R H(\xx-\xt,t)\widetilde{\Phi}_0(\xt)^* g(\xt) v(\xt) \de \xt\\ &\quad = \langle \widetilde{\Phi}_0, g\rangle_{L^2(0,1)} \int_\R H(\xx-\xt,t) v(\xt) \de \xt + \sum_{j \in \Z \setminus \{0\}} \int_\R \langle \widetilde{\Phi}_0^* g, \re^{2\pi \ri j \cdot }\rangle_{L^2(0,1)} \re^{2\pi \ri j \xt} H(\xx-\xt,t) v(\xt) \de \xt,\end{align*}
for $g \in L^2_{\mathrm{per}}((0,1),\R^n)$, $v \in C_{\mathrm{ub}}^1(\R,\R)$, $\xx \in \R$ and $t > 0$. We integrate by parts to rewrite the last term as
\begin{align*}
- \int_\R A_h(g)(\xt) \partial_\xt \left(H(\xx-\xt,t) v(\xt)\right) \de \xt
&= \partial_\xx \int_\R  H(\xx-\xt,t) A_h(g)(\xt) v(\xt) \de \xt\\ 
&\qquad - \, \int_\R  H(\xx-\xt,t) A_h(g)(\xt) \partial_\xt v(\xt) \de \xt,
\end{align*}
where $A_h(g)$ is the Fourier series
\begin{align*} A_h(g)(\xx) = \sum_{j \in \Z \setminus \{0\}} \frac{\langle \widetilde{\Phi}_0^* g, \re^{2\pi \ri j \cdot }\rangle_{L^2(0,1)}}{2\pi\ri j} \re^{2\pi \ri j \xx}, \qquad \xx \in \R.\end{align*}
By H\"older and Bessel's inequalities we have
\begin{align*}\sum_{j \in \Z \setminus \{0\}} \left|\frac{\langle \widetilde{\Phi}_0^* g, \re^{2\pi \ri j \cdot }\rangle_{L^2(0,1)}}{2\pi\ri j}\right| \leq \left\|\widetilde{\Phi}_0^* g\right\|_{L^2(0,1)} \left(\sum_{j \in \Z \setminus \{0\}} \frac{1}{4\pi j^2}\right)^{\frac{1}{2}} \lesssim \|g\|_{L^2(0,1)},\end{align*}
for $g \in L^2_{\mathrm{per}}((0,1),\R^n)$. Hence, the Fourier series $A_h(g)$ convergences absolutely and, thus, is continuous. In particular, $A_h \colon L^2_{\mathrm{per}}\big((0,1),\R^n\big) \to C(\R,\R)$ is a bounded linear map. Recalling~\eqref{e:convheatsem}, the desired result readily follows.
\end{proof}

\section{Nonlinear iteration scheme} \label{sec:itscheme}

We introduce the nonlinear iteration scheme that will be employed in~\S\ref{sec:nonlinearstab} to prove our nonlinear stability result, Theorem~\ref{t:mainresult}. To this end, let $u_0(x,t) = \phi_0(k_0x-\omega_0 t)$ be a diffusively spectrally stable wave-train solution to~\eqref{RD}, so that~\ref{assH1} and~\ref{assD1}-\ref{assD3} are satisfied. Moreover, let $v_0 \in C_{\mathrm{ub}}(\R)$. We consider the perturbed solution $u(t)$ to~\eqref{RD2} with initial condition $u(0) = \phi_0 + v_0$.

First, we study the equation for the unmodulated perturbation $\vt(t) = u(t) - \phi_0$ and establish $L^\infty$-bounds on the nonlinearity. As outlined in~\S\ref{sec:outlineproof}, the semilinear equation for $\vt(t)$ is only utilized to control regularity in the nonlinear stability argument. To gain sufficient temporal decay, we work with the modulated perturbation $v(t)$ defined in~\eqref{e:defv}. We derive a quasilinear equation for $v(t)$ and establish $L^\infty$-bounds on the nonlinearity. The phase modulation $\gamma(t)$ in~\eqref{e:defv} is then defined a posteriori, so that it compensates for the most critical contributions in the Duhamel formulation for $v(t)$, which can be identified using the semigroup decomposition obtained in~\S\ref{sec:decomp}. Consequently, $\gamma(t)$ is defined implicitly through an integral equation. By isolating the most critical nonlinear term in this integral equation and by relating the principal part of the semigroup to the convective heat semigroup, cf.~\S\ref{sec:decompprin}, one uncovers the perturbed viscous Hamilton-Jacobi equation that is satisfied by $\gamma(t)$. Up to a correction term, exhibiting higher-order temporal decay, the nonlinear terms in this equation are homogeneous in the variables $v(t)$, $\gamma_\xx(t)$ and their derivatives and possess $1$-periodic coefficients. Finally, we remove the most critical nonlinear term in the perturbed viscous Hamilton-Jacobi equation for $\gamma(t)$ by applying the Cole-Hopf transform and derive a Duhamel formulation for the Cole-Hopf variable. 
 
\subsection{The unmodulated perturbation}

The unmodulated perturbation $\vt(t)$ satisfies the semilinear equation~\eqref{e:umodpert}, whose Duhamel formulation reads
\begin{align}
\vt(t)=\re^{\El_0 t} v_0 + \int_0^t \re^{\El_0(t-s)}\widetilde{\mathcal{N}}(\vt(s))\de s. \label{e:intvt}
\end{align}
Using Taylor's theorem and the smoothness of the nonlinearity $f$ in~\eqref{RD} we establish the relevant nonlinear estimates.

\begin{lemma} \label{lem:nlboundsunmod}
Assume~\ref{assH1}. Fix a constant $C > 0$. Then, it holds
\begin{align*}
\left\|\NT(\vt)\right\|_\infty &\lesssim \|\vt\|_\infty^2,
\end{align*}
for $\vt \in C_{\mathrm{ub}}(\R)$ satisfying $\|\vt\|_\infty \leq C$. In addition, we have
\begin{align*}
\left\|\NT(\vt)\right\|_{W^{3,\infty}} &\lesssim  \|\vt\|_{W^{1,\infty}}^3 + \|\vt\|_{W^{2,\infty}}\|\vt\|_{W^{1,\infty}} + \|\vt\|_\infty\|\vt\|_{W^{3,\infty}},
\end{align*}
for $\vt \in C_{\mathrm{ub}}^3(\R)$ satisfying $\|\vt\|_\infty \leq C$.
\end{lemma}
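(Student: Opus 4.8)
The plan is to prove both estimates by writing $\NT(\vt)$ as a remainder in Taylor's theorem and then bounding its derivatives with the multivariate Faà di Bruno / product rule, using that $f$ is smooth and that $\|\vt\|_\infty$ stays in a fixed bounded set.

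For the first estimate, I would write
\[
\NT(\vt)(\xx) = f(\phi_0(\xx)+\vt(\xx)) - f(\phi_0(\xx)) - f'(\phi_0(\xx))\vt(\xx) = \left(\int_0^1 (1-\tau) f''\big(\phi_0(\xx)+\tau\vt(\xx)\big)\,\de\tau\right)[\vt(\xx),\vt(\xx)].
\]
Since $\phi_0$ is continuous and $1$-periodic, its range is compact, so $\phi_0(\xx)+\tau\vt(\xx)$ stays in a fixed compact set $K\subset\R^n$ for $\|\vt\|_\infty\le C$ and $\tau\in[0,1]$; hence $f''$ is bounded on $K$ and the integral-remainder representation immediately gives $\|\NT(\vt)\|_\infty\lesssim\|\vt\|_\infty^2$, with the implicit constant depending only on $C$ (and on $\phi_0$, $f$).

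For the second estimate, the idea is to differentiate $\NT(\vt)=f(\phi_0+\vt)-f(\phi_0)-f'(\phi_0)\vt$ up to three times. Each derivative falls either on $f(\phi_0+\vt)$ — producing chain-rule terms involving $f^{(j)}(\phi_0+\vt)$ contracted against products of derivatives of $\phi_0+\vt$ — or on the subtracted terms $f(\phi_0)$ and $f'(\phi_0)\vt$. The key cancellation is that the ``purely $\phi_0$'' contributions from $\partial_\xx^k f(\phi_0+\vt)$ at $\vt=0$ exactly cancel $\partial_\xx^k f(\phi_0)$, and the linear-in-$\vt$ pieces are handled by the $-f'(\phi_0)\vt$ subtraction up to first order; so after collecting terms, every surviving summand in $\partial_\xx^k\NT(\vt)$, $k\le 3$, carries at least two factors drawn from $\{\vt,\vt_\xx,\vt_{\xx\xx},\vt_{\xx\xx\xx}\}$ (with the third derivative of $\vt$ appearing at most linearly and then multiplied by a factor of $\vt$), the remaining factors being bounded functions of $\phi_0$ and its derivatives times $f^{(j)}(\phi_0+\vt)$, all of which are uniformly bounded on the compact set $K$ as above. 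Counting derivative budgets: the term with $\vt_{\xx\xx\xx}$ must pair with $\vt$ (giving $\|\vt\|_\infty\|\vt\|_{W^{3,\infty}}$), a term with $\vt_{\xx\xx}$ pairs with $\vt_\xx$ or $\vt$ (giving $\|\vt\|_{W^{2,\infty}}\|\vt\|_{W^{1,\infty}}$), and the remaining terms involve only first derivatives of $\vt$ with a cubic count possible (giving $\|\vt\|_{W^{1,\infty}}^3$); together with the lower-order terms $\|\vt\|_\infty^2$, $\|\vt\|_\infty\|\vt\|_{W^{1,\infty}}$, $\|\vt\|_\infty\|\vt\|_{W^{2,\infty}}$, all of which are dominated by the three stated terms after using $\|\vt\|_\infty\le C$, one obtains the claimed bound.

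The main obstacle is purely bookkeeping: one must verify carefully that after differentiating and subtracting, no term survives that contains $\vt_{\xx\xx\xx}$ without an accompanying factor of $\vt$ (which would force a $\|\vt\|_{W^{3,\infty}}$ term not multiplied by $\|\vt\|_\infty$, breaking the estimate), and likewise that no term has $\vt_{\xx\xx}$ standing alone. This follows because the only way to produce $\vt_{\xx\xx\xx}$ is the single chain-rule term $f'(\phi_0+\vt)\vt_{\xx\xx\xx}$ coming from $\partial_\xx^3 f(\phi_0+\vt)$; writing $f'(\phi_0+\vt)=f'(\phi_0)+\big(\int_0^1 f''(\phi_0+\tau\vt)\de\tau\big)\vt$, the $f'(\phi_0)$ piece is killed against $\partial_\xx^3\big(f'(\phi_0)\vt\big)$'s top term $f'(\phi_0)\vt_{\xx\xx\xx}$ modulo terms where derivatives hit $f'(\phi_0)$ (hence involving $\phi_0'$, still fine since those carry $\vt$ at most to first power but then bounded — in fact those are the $\|\vt\|_\infty\|\vt\|_{W^{2,\infty}}$-type terms), leaving only $\big(\int_0^1 f''(\phi_0+\tau\vt)\de\tau\big)\vt\,\vt_{\xx\xx\xx}$, which is $\lesssim\|\vt\|_\infty\|\vt\|_{W^{3,\infty}}$. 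An analogous but easier argument controls the $\vt_{\xx\xx}$ terms. Once this structural point is checked, the rest is a routine application of the product rule and the uniform bounds on $f^{(j)}$ over the compact set $K$.
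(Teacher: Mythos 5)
Your argument is correct and is essentially the paper's approach: the paper establishes Lemma~\ref{lem:nlboundsunmod} precisely by Taylor's theorem together with the smoothness of $f$ and the boundedness of $\phi_0+\tau\vt$ on a compact set, exactly as you do. Your detailed bookkeeping of the third-order derivative terms (in particular that $\vt_{\xx\xx\xx}$ only appears paired with the factor $f'(\phi_0+\vt)-f'(\phi_0)=\mathcal{O}(\|\vt\|_\infty)$, and similarly for the bare $\vt_{\xx\xx}$ terms) correctly fills in what the paper leaves as routine.
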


Note that $\El_0$ is a sectorial operator on $C_{\mathrm{ub}}^j(\R)$ with dense domain $D(\El_0) = C_{\mathrm{ub}}^{j+2}(\R)$ for any $j \in \mathbb N_0$, cf.~\cite[Corollary~3.1.9]{LUN}. Moreover, by smoothness of $f$, the nonlinearity in~\eqref{e:umodpert} is locally Lipschitz continuous on $C_{\mathrm{ub}}^j(\R)$ for any $j \in \mathbb N_0$. Consequently, local existence and uniqueness of the unmodulated perturbation is an immediate consequence of standard analytic semigroup theory, see~\cite[Theorem~7.1.5 and~Propositions~7.1.8 and~7.1.10]{LUN}.

\begin{proposition} \label{p:local_unmod} Assume~\ref{assH1}. Let $v_0 \in C_{\mathrm{ub}}(\R)$. Then, there exists a maximal time $T_{\max} \in (0,\infty]$ such that~\eqref{e:umodpert} admits a unique classical solution
\begin{align*}\vt \in C\big([0,T_{\max}),C_{\mathrm{ub}}(\R)\big) \cap C\big((0,T_{\max}),C_{\mathrm{ub}}^2(\R)\big) \cap C^1\big((0,T_{\max}),C_{\mathrm{ub}}(\R)\big), \end{align*}
with initial condition $\vt(0) = v_0$. Moreover, the map $[0,T_{\max}) \to C_{\mathrm{ub}}(\R), t \mapsto \sqrt{t} \, \vt_\xx(t)$ is continuous and, if $T_{\max} < \infty$, then we have
\begin{align*} \limsup_{t \uparrow T_{\max}} \left\|\vt(t)\right\|_\infty = \infty. \end{align*}
Finally, if $v_0 \in C_{\mathrm{ub}}^j(\R)$ for some $j \in \mathbb{N}_0$, then $\vt \in  C\big([0,T_{\max}),C_{\mathrm{ub}}^j(\R)\big) \cap C\big((0,T_{\max}),C_{\mathrm{ub}}^{j+2}(\R)\big)$ and there exist constants $K, r > 0$ and a time $t_0 \in (0,T_{\max})$, which are independent of $v_0$, such that, if $\|v_0\|_{W^{j,\infty}} < r$, then it holds
\begin{align*}
\|\vt(t)\|_{W^{j,\infty}} + \sqrt{t} \, \|\vt(t)\|_{W^{j + 1,\infty}} \leq K\|v_0\|_{W^{j,\infty}},
\end{align*}
for all $t \in [0,t_0]$.
\end{proposition}

\subsection{The modulated perturbation} \label{sec:modpert}

We define the modulated perturbation $v(t)$ by~\eqref{e:defv}, where the spatio-temporal phase $\gamma(t)$ satisfies $\gamma(0)=0$ and is to be defined a posteriori. Substituting $u(\xx - \gamma(\xx,t),t) = \phi_0(\xx) + v(\xx,t)$ into~\eqref{RD2} yields the quasilinear equation
\begin{align}
\left(\partial_t - \El_0\right)\left[v + \phi_0' \gamma\right] = \Non(v,\gamma,\partial_t \gamma) + \left(\partial_t - \El_0\right)\left[\gamma_\xx v\right], \label{e:modpertbeq}
\end{align}
for $v(t)$, where the nonlinearity $\Non$ is given by
\begin{align*}
\Non(v,\gamma,\gamma_t) &= \mathcal Q(v,\gamma) + \partial_\xx \mathcal R(v,\gamma,\gamma_t) + \partial_{\xx\xx} \mathcal S(v,\gamma),
\end{align*}
with
\begin{align*}
\mathcal Q(v,\gamma) &= \left(f(\phi_0+v) - f(\phi_0) - f'(\phi_0) v\right)\left(1-\gamma_\xx\right),\\
\mathcal R(v,\gamma,\gamma_t) &= -\gamma_t v + \omega_0 \gamma_\xx v + \frac{k_0^2}{1-\gamma_{\xx}} D \left(\gamma_\xx^2 \phi_0' - \frac{\gamma_{\xx\xx} v}{1-\gamma_\xx}\right),\\
\mathcal S(v,\gamma) &= k_0^2D \left(2\gamma_\xx v + \frac{\gamma_\xx^2 v}{1-\gamma_\xx}\right),
\end{align*}
cf.~\cite[Lemma~4.2]{JONZ}. With the aid of Taylor's theorem it is relatively straightforward to verify the relevant nonlinear bounds.

\begin{lemma} \label{lem:nlboundsmod}
Assume~\ref{assH1}. Fix a constant $C > 0$. Then, we have 
\begin{align*}
\|\mathcal Q(v,\gamma)\|_\infty &\lesssim \|v\|_\infty^2,\\
\|\mathcal R(v,\gamma,\gamma_t)\|_\infty &\lesssim \|v\|_\infty \|(\gamma_\xx,\gamma_t)\|_{W^{1,\infty} \times L^\infty} + \|\gamma_\xx\|_\infty^2,\\
\|\mathcal S(v,\gamma)\|_\infty &\lesssim \|v\|_\infty \|\gamma_\xx\|_\infty,
\end{align*}
for $v \in C_{\mathrm{ub}}(\R)$ and $(\gamma,\gamma_t) \in C_{\mathrm{ub}}^2(\R) \times C_{\mathrm{ub}}(\R)$ satisfying $\|v\|_\infty \leq C$ and $\|\gamma_\xx\|_\infty \leq \frac{1}{2}$. Moreover, it holds
\begin{align*}
\left\|\partial_\xx \mathcal S(v,\gamma)\right\|_\infty &\lesssim \|v\|_{W^{1,\infty}} \|\gamma_\xx\|_{W^{1,\infty}},
\end{align*}
for $v \in C_{\mathrm{ub}}^1(\R)$ and $\gamma \in C_{\mathrm{ub}}^2(\R)$ satisfying $\|\gamma_\xx\|_\infty \leq \frac{1}{2}$. Finally, we have
\begin{align*}
\|\mathcal{Q}(v,\gamma)\|_{W^{1,\infty}} &\lesssim \|v\|_\infty\|v\|_{W^{1,\infty}},\\
\|\mathcal{R}(v,\gamma,\gamma_t)\|_{W^{2,\infty}} &\lesssim \|(\gamma_\xx,\gamma_t)\|_{W^{3,\infty} \times W^{2,\infty}} \left(\|v\|_{W^{2,\infty}} + \|\gamma_\xx\|_{W^{1,\infty}}\right),\\
\|\mathcal{S}(v,\gamma)\|_{W^{3,\infty}} &\lesssim \|\gamma_\xx\|_{W^{3,\infty}} \|v\|_{W^{3,\infty}},
\end{align*}
for $v \in C_{\mathrm{ub}}^3(\R)$ and $(\gamma,\gamma_t) \in C_{\mathrm{ub}}^4(\R) \times C_{\mathrm{ub}}^2(\R)$ satisfying $\|v\|_\infty \leq C$ and $\|\gamma_\xx\|_{W^{1,\infty}} \leq \frac{1}{2}$.
\end{lemma}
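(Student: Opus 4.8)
The plan is to reduce all five estimates to three elementary facts. First, since $\phi_0$ is smooth and $1$-periodic, $\phi_0$ and all its derivatives are bounded on $\R$. Second, by Taylor's theorem with integral remainder,
\[
f(\phi_0+v)-f(\phi_0)-f'(\phi_0)v = \int_0^1 (1-s)\, f''\bigl(\phi_0+sv\bigr)[v,v]\,\de s ,
\]
so that, as $f''$ is continuous and the set $\{\phi_0(\xx)+sv(\xx):\xx\in\R,\ s\in[0,1]\}$ is bounded whenever $\|v\|_\infty\le C$, we get $\|f(\phi_0+v)-f(\phi_0)-f'(\phi_0)v\|_\infty\lesssim\|v\|_\infty^2$; moreover, by the chain and product rules, $\partial_\xx^k\bigl(f(\phi_0+v)-f(\phi_0)-f'(\phi_0)v\bigr)$ is a finite sum of terms each carrying at least two factors drawn from $\{v,v_\xx,\dots\}$, which, together with the Banach-algebra property of $C_{\mathrm{ub}}^k(\R)$ under the $W^{k,\infty}$-norm, yields $\|f(\phi_0+v)-f(\phi_0)-f'(\phi_0)v\|_{W^{k,\infty}}\lesssim\|v\|_\infty\|v\|_{W^{k,\infty}}$ for $v\in C_{\mathrm{ub}}^k(\R)$ with $\|v\|_\infty\le C$. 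Third, when $\|\gamma_\xx\|_\infty\le\frac12$, the Neumann series $\frac{1}{1-\gamma_\xx}=\sum_{k\ge0}\gamma_\xx^{\,k}$ gives $\bigl\|\tfrac{1}{1-\gamma_\xx}\bigr\|_\infty\le2$, and, using the Banach-algebra property together with the smallness of $\|\gamma_\xx\|_\infty$ to absorb surplus $\gamma_\xx$-factors, $\bigl\|\tfrac{1}{1-\gamma_\xx}\bigr\|_{W^{m,\infty}}\lesssim1+\|\gamma_\xx\|_{W^{m,\infty}}$ for $m\le3$; the same applies to $\tfrac{1}{(1-\gamma_\xx)^2}$, which arises upon differentiation.

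With these in hand, the first four bounds are routine. For $\mathcal Q$ one writes $\mathcal Q(v,\gamma)=\bigl(f(\phi_0+v)-f(\phi_0)-f'(\phi_0)v\bigr)(1-\gamma_\xx)$ and combines the second fact with $\|1-\gamma_\xx\|_\infty\le\frac32$. For $\mathcal S$ and $\mathcal R$ every summand is a bounded $1$-periodic coefficient ($k_0^2 D$, $\omega_0$, or $\phi_0'$) times a monomial in $v$, $\gamma_\xx$, $\gamma_{\xx\xx}$, $\gamma_t$ and $\tfrac{1}{1-\gamma_\xx}$; using $\gamma_\xx^2\le\frac12|\gamma_\xx|$, $\bigl\|\tfrac{1}{1-\gamma_\xx}\bigr\|_\infty\le2$ and $\|\gamma_{\xx\xx}\|_\infty\le\|\gamma_\xx\|_{W^{1,\infty}}$ one collapses each monomial to a product of exactly one $\|v\|_\infty$ and one of $\|\gamma_\xx\|_\infty$, $\|\gamma_t\|_\infty$, $\|\gamma_\xx\|_{W^{1,\infty}}$ — the only exception being the term $\tfrac{k_0^2}{1-\gamma_\xx}D\gamma_\xx^2\phi_0'$ in $\mathcal R$, which contributes the $\|\gamma_\xx\|_\infty^2$ on the right-hand side. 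The bound on $\partial_\xx\mathcal S$ follows the same pattern after the product rule: $\partial_\xx\mathcal S$ is a finite sum of terms, each a bounded periodic coefficient times a product of one factor from $\{v,v_\xx\}$ and one or more factors from $\{\gamma_\xx,\gamma_{\xx\xx},\tfrac{1}{1-\gamma_\xx}\}$, and absorbing the surplus $\gamma_\xx$-factors via $\|\gamma_\xx\|_\infty\le\frac12$ gives $\lesssim\|v\|_{W^{1,\infty}}\|\gamma_\xx\|_{W^{1,\infty}}$.

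The last estimate, on $\|\Non(v,\gamma,\gamma_t)\|_{W^{1,\infty}}$, is the one requiring genuine care, and the main obstacle is purely organizational: since $\Non=\mathcal Q+\partial_\xx\mathcal R+\partial_{\xx\xx}\mathcal S$, one derivative of $\Non$ demands $\mathcal Q\in W^{1,\infty}$, $\mathcal R\in W^{2,\infty}$ and $\mathcal S\in W^{3,\infty}$, and the homogeneity must survive the Leibniz expansion. The key structural observation is that every monomial occurring in $\mathcal Q$, $\mathcal R$ or $\mathcal S$ contains at least two ``small'' factors — one from $\{v,v_\xx,\dots\}$ and one from $\{\gamma_\xx,\gamma_{\xx\xx},\gamma_t,\dots\}$, or two from the latter group, as in $\gamma_\xx^2\phi_0'$ — so that when the up to three derivatives are distributed, the highest-order derivative lands on a single factor (producing a $W^{3,\infty}$- or $W^{2,\infty}$-norm) while the remaining factors stay at order $\le1$, where their norms are bounded by $C$ (for $v$) or $\frac12$ (for $\gamma_\xx$). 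Invoking the Banach-algebra property of $C_{\mathrm{ub}}^m(\R)$ for $m\le3$ and the third fact to control $\tfrac{1}{1-\gamma_\xx}$ and $\tfrac{1}{(1-\gamma_\xx)^2}$, each resulting term is bounded either by $\|v\|_\infty\|v\|_{W^{1,\infty}}$ — the contributions of $\mathcal Q$ and of the parts of $\mathcal R$, $\mathcal S$ that are at least quadratic in $v$, where an extra $\|\gamma_\xx\|_{W^{1,\infty}}\le\frac12$ is absorbed — or by $\|(\gamma_\xx,\gamma_t)\|_{W^{3,\infty}\times W^{2,\infty}}\bigl(\|v\|_{W^{3,\infty}}+\|\gamma_\xx\|_{W^{1,\infty}}\bigr)$; summing the finitely many terms yields the claim. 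The delicate point is precisely to set up the Leibniz bookkeeping so that at most one factor per monomial carries a top-order norm and all remaining factors are genuinely small, which is what renders the right-hand side bilinear rather than a sum of higher powers.
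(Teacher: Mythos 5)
Your proposal is correct and follows essentially the route the paper (implicitly) takes: the paper gives no detailed proof, remarking only that the bounds follow from Taylor's theorem applied to the explicit expressions for $\mathcal Q$, $\mathcal R$, $\mathcal S$, which is exactly your combination of the integral-remainder Taylor estimate for $f(\phi_0+v)-f(\phi_0)-f'(\phi_0)v$, the geometric bound on $1/(1-\gamma_\xx)$ under $\|\gamma_\xx\|_\infty\le\frac12$, and Leibniz bookkeeping ensuring each monomial keeps two small factors with at most one carrying a top-order norm.
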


Integrating~\eqref{e:modpertbeq} and recalling $\gamma(0) = 0$ yields the Duhamel formulation
\begin{align}
v(t)+\phi_0'\gamma(t) = \re^{\El_0 t} v_0 + \int_0^t \re^{\El_0(t-s)}\mathcal{N}(v(s),\gamma(s),\partial_t \gamma(s))\de s + \gamma_\xx(t)v(t). \label{e:intv}
\end{align}
As in~\cite{JONZW,JONZ} we make a judicious choice for $\gamma(t)$ so that the linear term $\phi_0'\gamma(t)$ compensates for the most critical nonlinear contributions in~\eqref{e:intv}. Thus, motivated by the semigroup decomposition~\eqref{e:semidecomp}, we introduce the variables
\begin{align} \label{e:defz}
\begin{split}
z(t) &:= v(t) - k_0 \partial_k \phi(\cdot;k_0) \gamma_\xx(t),\\
\widetilde{\gamma}(t) &:= \partial_t \gamma(t) - a\gamma_\xx(t),
\end{split}
\end{align}
cf.~Proposition~\ref{prop:family}, and make the implicit choice
\begin{align}
\gamma(t) = S_p^0(t)v_0 + \int_0^t S_p^0(t-s) \mathcal{N}(v(s),\gamma(s),\partial_t \gamma(s))\de s. \label{e:intgamma}
\end{align}
Substituting $v(t) + \phi_0'\gamma(t) = z(t) + \left(\phi_0' + k_0 \partial_k \phi(\cdot;k_0) \partial_\xx \right) \gamma(t)$ and~\eqref{e:intgamma} into~\eqref{e:intv} leads to the Duhamel formulation
\begin{align}
\begin{split}
z(t) &= \widetilde{S}(t)v_0+\int_0^t\widetilde{S}(t-s)\mathcal{N}(v(s),\gamma(s),\partial_t \gamma(s)) \de s + \gamma_\xx(t)v(t),
\end{split}\label{e:intz}
\end{align}
for the residual $z(t)$. 

Noting that $v(t) = z(t) + k_0 \partial_k \phi(\cdot;k_0) \gamma_\xx(t)$ and $\partial_t \gamma(t) = \widetilde{\gamma}(t) + a \gamma_\xx(t)$, the equations~\eqref{e:intgamma} and~\eqref{e:intz} form a closed system in $z(t)$, $\gamma_\xx(t)$, $\widetilde{\gamma}(t)$ and their derivatives. Since $\smash{\widetilde{S}(t)}$ and $(\partial_t - a\partial_\xx)S_p^0(t)$ decay algebraically at rate $(1+t)^{-1}$ as operators on $C_{\mathrm{ub}}(\R)$ by Propositions~\ref{prop:semexp} and~\ref{prop:semdif}, $z(t)$ and $\widetilde{\gamma}(t)$ are expected to exhibit higher-order decay (at least on the linear level). On the other hand, since $\partial_\xx S_p^0(t)$ decays at rate $\smash{(1+t)^{-1/2}}$ and can be directly related to the derivative of the convective heat semigroup $\partial_\xx \smash{\re^{(d\partial_\xx^2 + a\partial_\xx) t}}$, we cannot expect that $\gamma_{\xx}(t)$ decays faster than $\smash{(1+t)^{-1/2}}$, cf.~Propositions~\ref{prop:semiprinrec} and~\ref{prop:semiapp}. Hence, implicitly defining the phase modulation $\gamma(t)$ by~\eqref{e:intgamma}, it indeed captures the most critical terms in~\eqref{e:intv}. Moreover, it holds $\gamma(0) = 0$, because we have $S_p^0(0) = 0$.

Local existence and uniqueness of the phase modulation follows by applying a standard contraction mapping argument to~\eqref{e:intgamma}, where we use that the modulated perturbation $v(t)$ can be expressed as
\begin{align} \label{e:defv2}
v(\xx,t) = \vt(\xx-\gamma(\xx,t),t) + \phi_0(\xx-\gamma(\xx,t)) - \phi_0(\xx).
\end{align}
Thus, having established local existence and uniqueness of the unmodulated perturbation in Proposition~\ref{p:local_unmod}, the integral equation~\eqref{e:intgamma} yields a closed fixed point problem in $\gamma(t)$ and its temporal derivatives. We arrive at the following result, whose proof has been delegated to Appendix~\ref{app:B}. 

\begin{proposition} \label{p:gamma}
Assume~\ref{assH1}. Let $v_0 \in C_{\mathrm{ub}}(\R)$ and $j,l, m \in \mathbb{N}_0$. For $\vt$ and $T_{\max}$ as in Proposition~\ref{p:local_unmod}, there exists a maximal time $\tau_{\max} \in (0,T_{\max}]$ such that~\eqref{e:intgamma}, with $v$ given by~\eqref{e:defv2}, has a solution
\begin{align*}\gamma \in C\big([0,\tau_{\max}),C_{\mathrm{ub}}^{2 + m}(\R)\big) \cap C^{1+j}\big([0,\tau_{\max}),C_{\mathrm{ub}}^{l}(\R)\big), \end{align*}
satisfying $\gamma(t)=0$ for all $t \in [0,\tau_{\max})$ with $t \leq 1$. In addition, it holds $\|(\gamma(t),\partial_t \gamma(t)\|_{W^{2,\infty} \times L^\infty} < \frac12$ for all $t \in [0,\tau_{\max})$. Finally, if $\tau_{\max} < T_{\max}$, then we have
\begin{align*} \limsup_{t \uparrow \tau_{\max}} \left\|\left(\gamma(t),\partial_t \gamma(t)\right)\right\|_{W^{2,\infty} \times L^\infty} = \frac12.\end{align*}
\end{proposition}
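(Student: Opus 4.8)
The plan is to set up a contraction mapping argument on the integral equation~\eqref{e:intgamma}, using Proposition~\ref{p:local_unmod} to treat the unmodulated perturbation $\vt(t)$ as a known quantity on $[0,T_{\max})$. The key observation is that, via the representation~\eqref{e:defv2}, the modulated perturbation $v(t)$ is an explicit function of $\gamma(t)$ and $\vt(t)$: namely $v(\xx,t) = \vt(\xx-\gamma(\xx,t),t) + \phi_0(\xx-\gamma(\xx,t)) - \phi_0(\xx)$, which depends on $\gamma(t)$ and its first spatial derivative (the latter entering once we differentiate), while $\partial_t\gamma(t)$ also enters $\mathcal N$ through $\mathcal R$. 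So the right-hand side of~\eqref{e:intgamma}, evaluated with $v$ given by~\eqref{e:defv2}, defines a map $\Gamma \mapsto \mathcal T[\Gamma]$ on a suitable ball in the space $Y_T := C\big([0,T],C_{\mathrm{ub}}^{m+2j}(\R)\big) \cap C^j\big([0,T],C_{\mathrm{ub}}^{m}(\R)\big)$, and I would like to show $\mathcal T$ is a contraction on a small ball around $0$ for $T$ small.

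First I would fix $T_0 \in (0,T_{\max})$ and a bounded set in which $\vt$ lives, so that by Proposition~\ref{p:local_unmod} the quantities $\|\vt(t)\|_{W^{k,\infty}}$ (for $k$ up to the needed order, possibly with a $t^{-k/2}$ weight for $k$ beyond $2$) are under control on $[0,T_0]$. On the ball $\mathcal B_R = \{\Gamma \in Y_T : \Gamma(0)=0,\ \sup_{t\in[0,T]}\|(\Gamma(t),\partial_t\Gamma(t))\|_{W^{m+2j,\infty}\times W^{m,\infty}} \leq R\}$, with $R$ chosen so that $\|\Gamma_\xx\|_{W^{1,\infty}} \leq \tfrac12$ (so the nonlinear bounds of Lemma~\ref{lem:nlboundsmod} and the composition estimates apply), I would verify: (i) that $v$ defined by~\eqref{e:defv2} with $\gamma = \Gamma$ lies in the appropriate $C_{\mathrm{ub}}^{m+2j}$-class with norm controlled by $R$ and the $\vt$-bounds — this uses that composition $w \mapsto w(\cdot - \Gamma(\cdot,t))$ is bounded on $C_{\mathrm{ub}}^k$ with a Lipschitz dependence on $\Gamma$, and that $\phi_0$ is smooth and periodic; (ii) that $\partial_t\gamma$, needed as an argument of $\mathcal N$, is obtained self-consistently by differentiating~\eqref{e:intgamma} in $t$ using that $S_p^0(t)$ is a smooth family (this is where $\chi(t)$ vanishing on $[0,1]$ and smoothness of $G_p^0$ in $t$ matter, so $\partial_t S_p^0(t-s)$ makes sense and is bounded with the right $t$-behaviour by Proposition~\ref{prop:semdif}); (iii) the nonlinear estimates of Lemma~\ref{lem:nlboundsmod} bound $\mathcal N(v,\gamma,\partial_t\gamma)$ in the relevant $C_{\mathrm{ub}}^k$-norms, and the regularizing mapping properties of $S_p^0(t)$ from Proposition~\ref{prop:semdif} (together with Corollary~\ref{cor:der}-type smoothing for the derivative-loaded terms $\partial_\xx\mathcal R$, $\partial_{\xx\xx}\mathcal S$) show that $\int_0^t S_p^0(t-s)\mathcal N(\cdots)\de s$ gains enough regularity to land back in $C_{\mathrm{ub}}^{m+2j}$, with norm $\lesssim T^{\alpha}$ for some $\alpha>0$ times a polynomial in $R$ and the $\vt$-bounds, plus the term $S_p^0(t)v_0$ which is controlled directly by $\|v_0\|_{W^{2,\infty}}$ and vanishes at $t=0$. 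Choosing $R$ comparable to $\|v_0\|_{W^{2,\infty}}$ (plus a margin) and then $T$ small gives self-mapping; the same estimates applied to differences $\mathcal T[\Gamma_1] - \mathcal T[\Gamma_2]$, using Lipschitz dependence of $v$ on $\Gamma$ and of $\mathcal N$ on its arguments, give the contraction. The Banach fixed point theorem then yields a unique solution on $[0,T]$, and a standard continuation argument produces the maximal existence time $\tau_{\max} \in (0,T_{\max}]$ with the stated blow-up alternative: if $\tau_{\max} < T_{\max}$ and $\|(\gamma(t),\partial_t\gamma(t))\|_{W^{2,\infty}\times L^\infty}$ stayed bounded as $t \uparrow \tau_{\max}$, one could re-run the local argument from a time near $\tau_{\max}$ (the higher norms being recovered by parabolic smoothing of $S_p^0$) and extend, a contradiction.

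The main obstacle I anticipate is the self-referential appearance of $\partial_t\gamma$ inside the nonlinearity $\mathcal N$ (through $\mathcal R$), coupled with the quasilinear structure: the integral equation~\eqref{e:intgamma} does not directly give $\partial_t\gamma$, so one must differentiate it, and on the face of it $\partial_t \int_0^t S_p^0(t-s)\mathcal N(s)\de s$ threatens to produce a boundary term $S_p^0(0)\mathcal N(t) = 0$ (harmless, since $S_p^0(0)=0$) plus $\int_0^t \partial_t S_p^0(t-s)\mathcal N(s)\de s$, which by Proposition~\ref{prop:semdif} behaves like $(1+(t-s))^{-1}$ and is integrable — but near $t=0$ one must be careful that $\partial_t S_p^0$ is bounded (guaranteed by $\chi$ vanishing on $[0,1]$, so $S_p^0(\tau) \equiv 0$ for $\tau \leq 1$ and all derivatives are smooth). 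So the set-up must package $(\gamma,\partial_t\gamma)$ as a single fixed-point variable, or equivalently first solve for $\gamma$ in a space encoding one time derivative, and then check consistency. A secondary technical point is the loss of regularity: $\mathcal N$ contains $\partial_{\xx\xx}\mathcal S$, so controlling $\gamma$ in $C_{\mathrm{ub}}^{m+2j}$ requires $v$ — hence $\vt$ — in $C_{\mathrm{ub}}^{m+2j+2}$, which is exactly why Proposition~\ref{p:local_unmod} was stated with arbitrary extra smoothness $m$ and the weighted-in-time continuity of $t^{m/2}\vt(t)$; one simply feeds in a large enough $m$ in that proposition. Modulo careful bookkeeping of which norms of $\vt$ (and with which $t$-weights) are needed, the argument is routine analytic-semigroup-plus-contraction, and I would not expect any deep difficulty beyond organizing the estimates.
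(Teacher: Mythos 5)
Your proposal follows essentially the same route as the paper's proof: write $v$ via~\eqref{e:defv2} with $\vt$ from Proposition~\ref{p:local_unmod} treated as known, run a contraction mapping argument on~\eqref{e:intgamma} together with its temporal derivatives (so that $\partial_t\gamma$ is part of the fixed-point system), exploit that the propagators $\partial_t^j S_p^0(t)\partial_\xx^i\colon C_{\mathrm{ub}}(\R)\to C_{\mathrm{ub}}^l(\R)$ are $t$-uniformly bounded and strongly continuous (no singularity at $t=0$ since $\chi$ vanishes on $[0,1]$), and finish with a standard continuation argument for the blow-up alternative. One small correction: the extra smoothness $\vt\in C_{\mathrm{ub}}^{m+2j+2}$ you invoke as a fallback is unnecessary (and estimating $\Non$ in high norms would drag in the non-integrable weights $s^{-k/2}$ from Proposition~\ref{p:local_unmod} near $s=0$); all spatial derivatives of $\gamma$ come for free from the smoothing of $S_p^0(t-s)\partial_\xx^i$ acting on $C_{\mathrm{ub}}(\R)$, which is exactly your point (iii) and the mechanism the paper uses.
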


Recalling the definitions of the modulated perturbation $v(t)$ and the residual $z(t)$, their local existence and regularity is an immediate consequence of Propositions~\ref{p:local_unmod} and~\ref{p:gamma}.

\begin{corollary} \label{C:local_v}
Assume~\ref{assH1} and~\ref{assD3}. Let $v_0 \in C_{\mathrm{ub}}(\R)$. For $\vt$ as in Proposition~\ref{p:local_unmod} and $\gamma$ and $\tau_{\max}$ as in Proposition~\ref{p:gamma}, the modulated perturbation $v$, defined by~\eqref{e:defv}, and the residual $z$, defined by~\eqref{e:defz}, satisfy
$$v,z \in C\big([0,\tau_{\max}),C_{\mathrm{ub}}(\R)\big) \cap C\big((0,\tau_{\max}),C_{\mathrm{ub}}^2(\R)\big) \cap C^1\big((0,\tau_{\max}),C_{\mathrm{ub}}(\R)\big).$$
Moreover, their Duhamel formulations~\eqref{e:intv} and~\eqref{e:intz} hold for $t \in [0,\tau_{\max})$.
\end{corollary}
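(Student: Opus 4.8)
The plan is to transfer the regularity of the unmodulated perturbation $\vt$ and of the phase $\gamma$, furnished by Propositions~\ref{p:local_unmod} and~\ref{p:gamma}, to $v$ and $z$ through the explicit substitution formula~\eqref{e:defv2} and the algebraic relation~\eqref{e:defz}, and then to recover the Duhamel formulations~\eqref{e:intv} and~\eqref{e:intz} from the quasilinear equation~\eqref{e:modpertbeq} via a classical-solution/mild-solution argument combined with the semigroup decomposition~\eqref{e:semidecomp}. Choosing the regularity parameter in Proposition~\ref{p:gamma} large enough (namely $(M,J)=(m+1,1)$ in its notation), we may assume $\gamma \in C\big([0,\tau_{\max}),C_{\mathrm{ub}}^{m+3}(\R)\big) \cap C^1\big([0,\tau_{\max}),C_{\mathrm{ub}}^{m+1}(\R)\big)$; in particular $\gamma$ and its spatial derivatives up to order $m+3$ are bounded and uniformly continuous, locally uniformly in $t$, and $\|\gamma_\xx(\cdot,t)\|_\infty < 1$ after possibly shrinking the interval. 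Recall also from Proposition~\ref{p:local_unmod} that $\vt \in C\big([0,T_{\max}),C_{\mathrm{ub}}^2(\R)\big) \cap C\big((0,T_{\max}),C_{\mathrm{ub}}^{2+m}(\R)\big) \cap C^1\big([0,T_{\max}),C_{\mathrm{ub}}(\R)\big)$.

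Writing $v$ in the form~\eqref{e:defv2}, i.e.~$v(\cdot,t) = \big(\vt(\cdot,t) + \phi_0\big)\circ\big(\mathrm{id} - \gamma(\cdot,t)\big) - \phi_0$, the spatial regularity of $v$ would follow from the standard fact that, whenever $h \in C_{\mathrm{ub}}^k(\R)$ with $\|h'\|_\infty < 1$, the substitution operator $g \mapsto g\circ(\mathrm{id} - h)$ maps $C_{\mathrm{ub}}^k(\R)$ into itself and is jointly continuous in $(g,h)$, which is a consequence of Fa\`a di Bruno's formula together with the boundedness and, crucially, the uniform continuity of the top-order derivatives. Combining this with the regularity of $\vt$ and the smoothness and $1$-periodicity of $\phi_0$ yields $v \in C\big([0,\tau_{\max}),C_{\mathrm{ub}}^2(\R)\big) \cap C\big((0,\tau_{\max}),C_{\mathrm{ub}}^{2+m}(\R)\big)$. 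The only delicate point is continuity up to $t = 0$, where $\vt(t)$, and hence $v_0$, are merely of class $C_{\mathrm{ub}}^2$: since $\gamma(0) = 0$ and $\gamma(t) \to 0$ in $C_{\mathrm{ub}}^2(\R)$ as $t \downarrow 0$, uniform continuity of $v_0''$ forces $\partial_\xx^2\big[v_0 \circ (\mathrm{id} - \gamma(\cdot,t))\big] \to v_0''$ uniformly, whence $v(t) \to v_0$ in $C_{\mathrm{ub}}^2(\R)$. This is exactly why the perturbation classes are taken in the uniformly continuous scale $C_{\mathrm{ub}}^k$ rather than in $C_b^k$.

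For the time derivative, differentiating~\eqref{e:defv2} by the chain rule gives $\partial_t v(\cdot,t) = \vt_t(\cdot,t)\circ(\mathrm{id}-\gamma(\cdot,t)) - \big(\big(\vt_\xx(\cdot,t) + \phi_0'\big)\circ(\mathrm{id}-\gamma(\cdot,t))\big)\,\partial_t\gamma(\cdot,t)$, and since $\vt_t = \El_0\vt + \NT(\vt) \in C\big([0,T_{\max}),C_{\mathrm{ub}}(\R)\big)$, each term on the right-hand side lies in $C\big([0,\tau_{\max}),C_{\mathrm{ub}}(\R)\big)$ by the regularity of $\vt$ and $\gamma$ and the joint continuity of the substitution operator; together with a pointwise fundamental-theorem-of-calculus argument this gives $v \in C^1\big([0,\tau_{\max}),C_{\mathrm{ub}}(\R)\big)$. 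Since $\partial_k\phi(\cdot;k_0)$ is smooth and $1$-periodic by Proposition~\ref{prop:family}, multiplication by it is bounded on every $C_{\mathrm{ub}}^k(\R)$ and commutes with $\partial_t$; therefore, by~\eqref{e:defz}, $z = v - k_0\partial_k\phi(\cdot;k_0)\gamma_\xx$ inherits exactly the regularity of $v$ (using that $\gamma_\xx$ inherits that of $\gamma$), which proves the claimed regularity of $v$ and $z$.

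For the Duhamel formulations, set $w := v + \phi_0'\gamma - \gamma_\xx v$, so that $w \in C\big([0,\tau_{\max}),C_{\mathrm{ub}}^2(\R)\big) \cap C^1\big([0,\tau_{\max}),C_{\mathrm{ub}}(\R)\big)$ by the above. The chain-rule computation leading to~\eqref{e:modpertbeq} shows that $w$ is a classical solution of $\partial_t w = \El_0 w + \Non(v,\gamma,\partial_t\gamma)$ with $w(0) = v_0$ (using $\gamma(0) = 0$), where $\Non(v(\cdot),\gamma(\cdot),\partial_t\gamma(\cdot)) \in C\big([0,\tau_{\max}),C_{\mathrm{ub}}(\R)\big)$ by Lemma~\ref{lem:nlboundsmod}. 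As $\El_0$ generates an analytic semigroup on $C_{\mathrm{ub}}(\R)$, this classical solution coincides with the mild one, i.e.~$w(t) = \re^{\El_0 t}v_0 + \int_0^t \re^{\El_0(t-s)}\Non(v(s),\gamma(s),\partial_t\gamma(s))\,\de s$, which is precisely~\eqref{e:intv} after substituting back $w(t) = v(t) + \phi_0'\gamma(t) - \gamma_\xx(t)v(t)$. Finally, rewriting $v(t) + \phi_0'\gamma(t) = z(t) + \big(\phi_0' + k_0\partial_k\phi(\cdot;k_0)\partial_\xx\big)\gamma(t)$, inserting the semigroup decomposition~\eqref{e:semidecomp} into~\eqref{e:intv}, and using the defining integral equation~\eqref{e:intgamma} for $\gamma$ to cancel the common term $\big(\phi_0' + k_0\partial_k\phi(\cdot;k_0)\partial_\xx\big)\gamma(t)$, one arrives at~\eqref{e:intz}. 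I expect the main obstacle to be the first step, namely establishing that the substitution operator preserves the $C_{\mathrm{ub}}^k$-scale and depends continuously on $t$, in particular its right-continuity at $t=0$ where only $C_{\mathrm{ub}}^2$-regularity is available; the remaining steps are routine chain-rule bookkeeping and a standard appeal to analytic semigroup theory.
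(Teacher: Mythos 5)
Your proposal is correct and follows essentially the same route the paper takes (and leaves implicit): the regularity of $v$ and $z$ is read off from~\eqref{e:defv2} and~\eqref{e:defz} using Propositions~\ref{p:local_unmod} and~\ref{p:gamma} with the parameters there chosen large enough, \eqref{e:intv} follows by integrating~\eqref{e:modpertbeq} with $\gamma(0)=0$ via the classical-equals-mild semigroup argument, and~\eqref{e:intz} by inserting~\eqref{e:intgamma} and the decomposition~\eqref{e:semidecomp} into~\eqref{e:intv}. One small correction: drop the caveat ``$\|\gamma_\xx(\cdot,t)\|_\infty<1$ after possibly shrinking the interval,'' since the conclusion is claimed on all of $[0,\tau_{\max})$ and no smallness of $\gamma_\xx$ is actually used in your argument---the substitution $g\mapsto g\circ(\mathrm{id}-\gamma(\cdot,t))$ preserves the $C_{\mathrm{ub}}^k$-scale without any invertibility of $\mathrm{id}-\gamma(\cdot,t)$, and well-definedness of $\Non$ along the solution is already implicit in Proposition~\ref{p:gamma}.
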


\subsection{Derivation of perturbed viscous Hamilton-Jacobi equation} \label{sec:derHamJac}

We derive a perturbed viscous Hamilton-Jacobi equation for the phase modulation $\gamma(t)$. First, we collect all $\gamma_\zeta^2$-contributions in the nonlinearity $\Non(v,\gamma,\gamma_t)$ in~\eqref{e:intgamma}, which are the nonlinear terms from which we expect the slowest decay. Then, we use the decomposition~\eqref{e:prindecomp} of the propagator $S_p^0(t)$ and Proposition~\ref{prop:semiapp} to decompose~\eqref{e:intgamma} in a part of the form
\begin{align} \int_0^t \re^{\left(d\partial_\xx^2 + a\partial_\xx\right) (t-s)} F(z(s),v(s),\gamma(s),\widetilde{\gamma}(s)) \de s, \label{e:form}\end{align}
and a residual $r(t)$ from which we expect higher-order temporal decay. Correcting the phase $\gamma(t)$ with the residual term $r(t)$ and applying the convective heat operator $\partial_t - d\partial_\xx^2 - a\partial_\xx$ to~\eqref{e:intgamma} then yields the desired perturbed viscous Hamilton-Jacobi equation with nonlinearity $F(z,v,\gamma,\widetilde{\gamma})$. 

We start by isolating the $\gamma_\xx^2$-contributions in the nonlinearity in~\eqref{e:intgamma}. Recalling $v(t) = z(t) + k_0 \partial_k \phi(\cdot;k_0) \gamma_\xx(t)$, $\partial_t \gamma(t) = \widetilde{\gamma}(t) + a \gamma_\xx(t)$ and $a = \omega_0 - k_0\omega'(k_0)$, cf.~\eqref{e:defad} and~\eqref{e:defz}, we rewrite the nonlinearity in~\eqref{e:intgamma} as
\begin{align} \label{e:nonldecomp}
\Non(v(s),\gamma(s),\partial_t \gamma(s)) = k_0^2 f_p \gamma_{\xx}(s)^2 + \mathcal N_p(z(s),v(s),\gamma(s),\widetilde{\gamma}(s)),
\end{align}
where the 1-periodic function
\begin{align*}
f_p &= \frac{1}{2} f''(\phi_0)\left(\partial_k \phi(\cdot;k_0),\partial_k \phi(\cdot;k_0)\right) + \omega'(k_0) \partial_{\xx k} \phi(\cdot;k_0) + D\left(\phi_0'' + 2k_0 \partial_{\xx\xx k} \phi(\cdot;k_0)\right),
\end{align*}
captures all $\gamma_\xx^2$-contributions, cf.~\cite[Lemma~5.1]{JONZW}, and the residual is given by
\begin{align*}
\mathcal N_p(z,v,\gamma,\widetilde{\gamma}) &= \mathcal Q_p(z,v,\gamma) + \partial_\xx \mathcal R_p(z,v,\gamma,\widetilde{\gamma}) + \partial_{\xx\xx} \mathcal S_p(z,v,\gamma),
\end{align*}
with
\begin{align*}
\mathcal Q_p(z,v,\gamma) &= \left(f(\phi_0+v) - f(\phi_0) - f'(\phi_0) v\right)\gamma_\xx + f(\phi_0+v) - f(\phi_0) - f'(\phi_0) v - \frac{1}{2} f''(\phi_0)(v,v)\\
&\qquad + \frac{1}{2}f''(\phi_0)(z,z) + k_0 \gamma_{\xx} f''(\phi_0)(z,\partial_k \phi(\cdot;k_0)) + 2k_0^2\omega'(k_0) \gamma_\xx \gamma_{\xx\xx} \partial_k \phi(\cdot;k_0)\\
&\qquad + \, 2k_0^2 D \left(\gamma_\xx \gamma_{\xx\xx} \left(\phi_0' + 4k_0\partial_{\xx k} \phi(\cdot;k_0)\right) + 2k_0\left(\gamma_{\xx\xx}^2 + \gamma_\xx \gamma_{\xx\xx\xx}\right) \partial_k \phi(\cdot;k_0)\right),\\
\mathcal R_p(z,v,\gamma,\widetilde{\gamma}) &= -v \widetilde{\gamma} + k_0\omega_0'(k_0) \gamma_{\xx} z + \frac{k_0^2}{1-\gamma_\xx}D\left(\gamma_\xx^3 \phi_0' - \frac{\gamma_{\xx\xx} v}{1-\gamma_\xx}\right),\\
\mathcal S_p(z,v,\gamma) &= k_0^2D \left(2\gamma_\xx z + \frac{\gamma_\xx^2 v}{1-\gamma_\xx}\right).
\end{align*}
With the aid of Taylor's theorem we readily establish the following nonlinear estimates.

\begin{lemma} \label{lem:nlboundsmod3}
Assume~\ref{assH1} and~\ref{assD3}. Fix a constant $C > 0$. Then, we have
\begin{align*}
\begin{split}
\|\mathcal Q_p(z,v,\gamma)\|_\infty &\lesssim \left(\|v\|_\infty + \|\gamma_\xx\|_\infty\right)\|v\|_\infty^2 + \left(\|z\|_\infty + \|\gamma_\xx\|_\infty\right)\|z\|_\infty + \|\gamma_\xx\|_{W^{1,\infty}}\|\gamma_{\xx\xx}\|_{W^{1,\infty}},\\
\|\mathcal R_p(z,v,\gamma,\widetilde{\gamma})\|_\infty &\lesssim \|v\|_\infty \left(\|\widetilde{\gamma}\|_\infty + \|\gamma_{\xx\xx}\|_\infty\right) + \|z\|_\infty \|\gamma_\xx\|_\infty + \|\gamma_\xx\|^3_\infty,\\
\|\mathcal S_p(z,v,\gamma)\|_\infty &\lesssim \|v\|_\infty\|\gamma_\xx\|^2_\infty + \|z\|_\infty \|\gamma_\xx\|_\infty,
\end{split}
\end{align*}
for $z \in C_{\mathrm{ub}}(\R)$, $v \in C_{\mathrm{ub}}(\R)$ and $(\gamma,\widetilde{\gamma}) \in C_{\mathrm{ub}}^3(\R) \times C_{\mathrm{ub}}(\R)$ satisfying $\|v\|_\infty \leq C$ and $\|\gamma_\xx\|_\infty \leq \frac{1}{2}$.
\end{lemma}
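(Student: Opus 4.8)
The plan is a purely term-by-term estimate: all of the algebraic cancellations have already been spent in passing from $\Non$ to the explicit expressions for $\mathcal Q_p$, $\mathcal R_p$ and $\mathcal S_p$ (in which $z$ and $v$ now figure as independent arguments), so what remains is to bound each summand directly using the smoothness of $f$, the boundedness of the $1$-periodic coefficient functions, and the hypotheses $\|v\|_\infty \le C$ and $\|\gamma_\xx\|_\infty \le \tfrac12$.

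First I would record the elementary ingredients. Since $\phi_0$ is continuous and $1$-periodic, the set $K_C := \phi_0(\R) + \{w \in \R^n : |w| \le C\}$ is compact, so $f$ and its first three derivatives are bounded on $K_C$; hence, for $v \in C_{\mathrm{ub}}(\R)$ with $\|v\|_\infty \le C$, Taylor's theorem with integral remainder yields the pointwise bounds $|f(\phi_0+v) - f(\phi_0) - f'(\phi_0)v| \lesssim |v|^2$ and $|f(\phi_0+v) - f(\phi_0) - f'(\phi_0)v - \tfrac12 f''(\phi_0)(v,v)| \lesssim |v|^3$, with constants depending only on $C$. By Hypothesis~\ref{assH1} and Proposition~\ref{prop:family}, the functions $\phi_0'$, $\phi_0''$, $\partial_k \phi(\cdot;k_0)$, $\partial_{\xx k}\phi(\cdot;k_0)$ and $\partial_{\xx\xx k}\phi(\cdot;k_0)$ are smooth and $1$-periodic, hence bounded on $\R$; and $\|\gamma_\xx\|_\infty \le \tfrac12$ forces $|1-\gamma_\xx|^{-1} \le 2$, so every factor of $(1-\gamma_\xx)^{-1}$ occurring in $\mathcal R_p$ and $\mathcal S_p$ is uniformly bounded.

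With these in hand the bounds on $\mathcal S_p$ and $\mathcal R_p$ are immediate. For $\mathcal S_p(z,v,\gamma) = k_0^2 D\big(2\gamma_\xx z + \gamma_\xx^2 v/(1-\gamma_\xx)\big)$ one estimates the two summands by $\|\gamma_\xx\|_\infty \|z\|_\infty$ and $\|\gamma_\xx\|_\infty^2 \|v\|_\infty$, giving the third inequality. For $\mathcal R_p$ one bounds $-v\widetilde\gamma$ by $\|v\|_\infty \|\widetilde\gamma\|_\infty$, the term $k_0\omega'(k_0)\gamma_\xx z$ by $\|\gamma_\xx\|_\infty\|z\|_\infty$, and the three summands inside $\frac{k_0^2}{1-\gamma_\xx}D(\cdot)$ by $\|\gamma_\xx\|_\infty^3$, $\|\gamma_{\xx\xx}\|_\infty\|v\|_\infty$ and $\|\gamma_\xx\|_\infty^2\|v\|_\infty$ respectively; collecting terms produces the second inequality.

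For $\mathcal Q_p$ the only delicate point is to spend the Taylor expansion at the correct order. The summand $\big(f(\phi_0+v) - f(\phi_0) - f'(\phi_0)v\big)\gamma_\xx$ must be handled with the second-order remainder, giving $\|\gamma_\xx\|_\infty\|v\|_\infty^2$ (using the cruder bound $|f(\phi_0+v)-f(\phi_0)|\lesssim\|v\|_\infty$ here would cost a power of $\|v\|_\infty$ and break the estimate), whereas $f(\phi_0+v) - f(\phi_0) - f'(\phi_0)v - \tfrac12 f''(\phi_0)(v,v)$ is a genuine third-order remainder, giving $\|v\|_\infty^3$. The term $\tfrac12 f''(\phi_0)(z,z)$ is bounded by $\|z\|_\infty^2$ and $k_0\gamma_\xx f''(\phi_0)(z,\partial_k\phi(\cdot;k_0))$ by $\|\gamma_\xx\|_\infty\|z\|_\infty$; each remaining summand is a bounded $1$-periodic coefficient times one of $\gamma_\xx\gamma_{\xx\xx}$, $\gamma_{\xx\xx}^2$ or $\gamma_\xx\gamma_{\xx\xx\xx}$, all of which are $\lesssim \|\gamma_\xx\|_{W^{1,\infty}}\|\gamma_{\xx\xx}\|_{W^{1,\infty}}$ since $\|\gamma_{\xx\xx}\|_\infty \le \min\{\|\gamma_\xx\|_{W^{1,\infty}},\|\gamma_{\xx\xx}\|_{W^{1,\infty}}\}$ and $\|\gamma_{\xx\xx\xx}\|_\infty \le \|\gamma_{\xx\xx}\|_{W^{1,\infty}}$. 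Summing all contributions gives exactly the claimed bound on $\|\mathcal Q_p\|_\infty$. I do not anticipate a real obstacle: the argument is routine, and the only thing to watch is this bookkeeping of Taylor orders, so that no term is estimated by a lower power of $\|v\|_\infty$ than the right-hand side permits.
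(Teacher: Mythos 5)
Your proposal is correct and follows exactly the route the paper intends: the paper omits the proof, stating only that the estimates follow "with the aid of Taylor's theorem," and your term-by-term argument — second- and third-order Taylor remainders for the $f$-terms (valid since $\|v\|_\infty\le C$ keeps $\phi_0+v$ in a compact set), boundedness of the smooth $1$-periodic coefficients, $(1-\gamma_\xx)^{-1}\le 2$ from $\|\gamma_\xx\|_\infty\le\frac12$, and the elementary $W^{1,\infty}$-bookkeeping for $\gamma_\xx\gamma_{\xx\xx}$, $\gamma_{\xx\xx}^2$, $\gamma_\xx\gamma_{\xx\xx\xx}$ — is precisely that intended argument, with the Taylor orders spent correctly so that every summand lands in the stated right-hand sides.
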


Using the commutation relations~\eqref{e:commu}, the decomposition~\eqref{e:prindecomp} of the propagator $S_p^i(t)$, the decomposition of $S_h^0(t-s)(f_p \gamma_\zeta(s)^2)$ established in Proposition~\ref{prop:semiapp} and the decomposition~\eqref{e:nonldecomp} of the nonlinearity, we rewrite~\eqref{e:intgamma} as
\begin{align}
\begin{split}
\gamma(t) &= r(t) + S_h^0(t)v_0 + \int_0^t \re^{\left(d\partial_\xx^2 + a\partial_\xx\right) (t-s)} \left(\nu \gamma_\xx(s)^2 - k_0^2 A_h(f_p) \partial_\xx \left(\gamma_\xx(s)^2\right)\right) \de s\\
&\qquad + \, \int_0^t S_h^0(t-s) \mathcal Q_p(z(s),v(s),\gamma(s))\de s
- \int_0^t S_h^1(t-s) \mathcal R_p(z(s),v(s),\gamma(s),\widetilde{\gamma}(s)) \de s\\
&\qquad + \, \int_0^t S_h^2(t-s) \mathcal S_p(z(s),v(s),\gamma(s)) \de s, 
\end{split}
\label{e:intgamma2}
\end{align}
where by the computations~\cite[Section~4.2]{DSSS} we have
$$\nu = k_0^2\langle \widetilde{\Phi}_0, f_p\rangle_{L^2(0,1)} = -\frac{1}{2}k_0^2 \omega''(k_0),$$
and where $r(t)$ is the residual
\begin{align}
\begin{split}
r(t) &:= \widetilde{S}_r^0(t)v_0 + k_0^2 \int_0^t \widetilde{S}_r^0(t-s)\left(f_p \gamma_\xx(s)^2\right) \de s + k_0^2 \partial_\xx \int_0^t \re^{\left(d\partial_\xx^2 + a\partial_\xx\right) (t-s)}\left(A_h(f_p) \gamma_\xx(s)^2\right)\\
&\quad + \, \partial_\xx \int_0^t S_p^0(t-s) \mathcal R_p(z(s),v(s),\gamma(s),\widetilde{\gamma}(s)) \de s + \partial_\xx^2 \int_0^t S_p^0(t-s) \mathcal S_p(z(s),v(s),\gamma(s)) \de s\\ 
&\quad - \, 2\partial_\xx \int_0^t S_p^1(t-s) \mathcal S_p(z(s),v(s),\gamma(s)) \de s + \int_0^t \widetilde{S}_r^0(t-s) \mathcal Q_p(z(s),v(s),\gamma(s)) \de s\\
&\quad - \, \int_0^t \widetilde{S}_r^1(t-s) \mathcal R_p(z(s),v(s),\gamma(s),\widetilde{\gamma}(s)) \de s
+ \int_0^t \widetilde{S}_r^2(t-s) \mathcal S_p(z(s),v(s),\gamma(s)) \de s,
\end{split}\label{e:intr}
\end{align}
capturing all contributions in~\eqref{e:intgamma}, which we expect to exhibit higher-order decay, cf.~Proposition~\ref{prop:semiprinrec}.

Recalling the definition~\eqref{e:Shexpr} of $S_h^i(t)$, equation~\eqref{e:intgamma2} implies that $\gamma(t) - r(t)$ is indeed of the form~\eqref{e:form}, where we have 
$F(z,v,\gamma,\widetilde{\gamma}) = \nu \gamma_\xx^2 + G(z,v,\gamma,\widetilde{\gamma})$ with
$$G(z,v,\gamma,\widetilde{\gamma}) = - k_0^2 A_h(f_p) \partial_\xx (\gamma_\xx^2) + \widetilde{\Phi}_0^*\mathcal Q_p(z,v,\gamma) - \left(\partial_\xx \widetilde{\Phi}_0^*\right)\mathcal R_p(z,v,\gamma,\widetilde{\gamma}) + \left(\partial_\xx^2 \widetilde{\Phi}_0^*\right)\mathcal S_p(z,v,\gamma).$$
Note that the map $t \mapsto F(z(t),v(t),\gamma(t),\widetilde{\gamma}(t))$ lies in $C\big([0,\tau_{\max}),C_{\mathrm{ub}}(\R)\big) \cap C^1\big((0,\tau_{\max}),C_{\mathrm{ub}}(\R)\big)$ by Proposition~\ref{p:gamma} and Corollary~\ref{C:local_v}. So, standard analytic semigroup theory, cf.~\cite[Theorem~4.3.4]{LUN}, readily yields the regularity of $\gamma(t) - r(t)$ and, thus, of $r(t)$ by Proposition~\ref{p:gamma}.

\begin{corollary} \label{C:local_r}
Assume~\ref{assH1} and~\ref{assD3}. Let $v_0 \in C_{\mathrm{ub}}(\R)$. For $\gamma$ and $\tau_{\max}$ as in Proposition~\ref{p:gamma} and for $v$ and $z$ as in Corollary~\ref{C:local_v}, the residual $r$, defined by~\eqref{e:intr}, satisfies
$$r \in C\big([0,\tau_{\max}),C_{\mathrm{ub}}(\R)\big) \cap C\big((0,\tau_{\max}),C_{\mathrm{ub}}^2(\R)\big) \cap C^1\big((0,\tau_{\max}),C_{\mathrm{ub}}(\R)\big).$$
In addition, the map $[0,\tau_{\max}) \to C_{\mathrm{ub}}(\R), t \mapsto \sqrt{t} \, r_\xx(t)$ is continuous.
\end{corollary}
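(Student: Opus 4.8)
\emph{Proof strategy.} The plan is to extract the regularity of $r$ from the identity
\[
\gamma(t) - r(t) = S_h^0(t)v_0 + \int_0^t \re^{\left(d\partial_\xx^2 + a\partial_\xx\right)(t-s)} F\big(z(s),v(s),\gamma(s),\widetilde{\gamma}(s)\big)\,\de s ,
\]
which is equation~\eqref{e:intgamma2} after transferring $r(t)$ to the left-hand side; here one uses $S_h^0(t)w = \re^{(d\partial_\xx^2+a\partial_\xx)t}\big(\widetilde{\Phi}_0^* w\big)$, cf.~\eqref{e:Shexpr} and~\eqref{e:convheatsem}, the decomposition of $S_h^0(t-s)\big(f_p\gamma_\xx(s)^2\big)$ from Proposition~\ref{prop:semiapp}, and the integration-by-parts identities that rewrite $S_h^1(t-s)\mathcal{R}_p$ and $S_h^2(t-s)\mathcal{S}_p$ as convolutions against $\re^{(d\partial_\xx^2+a\partial_\xx)(t-s)}$ plus $\partial_\xx$-outside contributions, the latter having been collected into $r(t)$ in~\eqref{e:intr}; moreover $F = \nu\gamma_\xx^2 + G$ is the nonlinearity displayed just above the statement. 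Since $d\partial_\xx^2 + a\partial_\xx$ is sectorial on $C_{\mathrm{ub}}(\R)$ with dense domain $C_{\mathrm{ub}}^2(\R)$, cf.~\cite[Corollary~3.1.9]{LUN}, and generates the analytic semigroup~\eqref{e:convheatsem}, the right-hand side above is the mild solution at time $t$ of the Cauchy problem $\partial_t w = (d\partial_\xx^2 + a\partial_\xx)w + F(z,v,\gamma,\widetilde{\gamma})$, $w(0) = \widetilde{\Phi}_0^* v_0$. The idea is to upgrade this mild solution to a classical one via the regularity theory for analytic semigroups, and then to transfer the conclusion to $r$ by subtracting the regularity of $\gamma$ supplied by Proposition~\ref{p:gamma}.

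In more detail, I would first note that by Proposition~\ref{prop:speccons} the adjoint eigenfunction $\widetilde{\Phi}_0$, and hence all of its derivatives, is smooth and $1$-periodic, so that $\widetilde{\Phi}_0^* v_0 \in C_{\mathrm{ub}}^2(\R) = D(d\partial_\xx^2 + a\partial_\xx)$; in particular the homogeneous term $t \mapsto S_h^0(t)v_0$ already lies in $C\big([0,\infty),C_{\mathrm{ub}}^2(\R)\big) \cap C^1\big([0,\infty),C_{\mathrm{ub}}(\R)\big)$ by standard analytic semigroup theory. Second, I would verify that the forcing $t \mapsto F(z(t),v(t),\gamma(t),\widetilde{\gamma}(t))$ lies in $C^1\big([0,\tau_{\max}),C_{\mathrm{ub}}(\R)\big)$: Proposition~\ref{p:gamma} with $j = 2$ and $m$ large enough shows that $\gamma_\xx$, $\gamma_{\xx\xx}$, $\gamma_{\xx\xx\xx}$ and $\widetilde{\gamma} = \partial_t\gamma - a\gamma_\xx$ are continuously differentiable in $t$ with values in $C_{\mathrm{ub}}(\R)$, and Corollary~\ref{C:local_v} gives the same for $z$ and $v$; since $f$ and $\phi_0$ are smooth, the periodic coefficients $f_p$, $A_h(f_p)$ and the derivatives of $\widetilde{\Phi}_0$ are continuous and $1$-periodic, hence bounded, and $\|\gamma_\xx(t)\|_\infty < 1$ on $[0,\tau_{\max})$, the chain rule in $C_{\mathrm{ub}}(\R)$ for smooth superposition operators and for pointwise multiplication (recall $C_{\mathrm{ub}}(\R)$ is a Banach algebra) propagates this $C^1$-in-time regularity through the finitely many products, compositions and quotients defining $\mathcal{Q}_p$, $\mathcal{R}_p$, $\mathcal{S}_p$, and hence $F$. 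With these two inputs, \cite[Theorem~4.3.1]{LUN} applies and shows that $\gamma - r$ is the classical solution of the above Cauchy problem, and therefore belongs to $C\big([0,\tau_{\max}),C_{\mathrm{ub}}^2(\R)\big) \cap C^1\big([0,\tau_{\max}),C_{\mathrm{ub}}(\R)\big)$. Finally, Proposition~\ref{p:gamma} with $j = 1$ and $m = 0$ gives $\gamma \in C\big([0,\tau_{\max}),C_{\mathrm{ub}}^2(\R)\big) \cap C^1\big([0,\tau_{\max}),C_{\mathrm{ub}}(\R)\big)$, so writing $r = \gamma - (\gamma - r)$ completes the argument.

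The only step that is not a direct citation of analytic semigroup theory is the $C^1$-in-time regularity of the forcing $t \mapsto F(z(t),v(t),\gamma(t),\widetilde{\gamma}(t))$, and this is essentially bookkeeping once Propositions~\ref{prop:speccons} and~\ref{p:gamma} and Corollary~\ref{C:local_v} are in place. The point requiring attention is counting derivatives: $\mathcal{Q}_p$ involves $\gamma$ up to its third spatial derivative, and $\widetilde{\gamma}$ must be differentiated once more in time (hence $\gamma$ twice in time), so the parameters $j, m$ in Proposition~\ref{p:gamma} must be chosen large enough to cover all of these demands simultaneously; everything else, including the product and superposition estimates, is routine.
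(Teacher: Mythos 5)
Your proposal is correct and follows essentially the same route as the paper: rewrite $\gamma - r$ via~\eqref{e:intgamma2} as the mild solution of the convective heat equation with forcing $F$, check that $t \mapsto F(z(t),v(t),\gamma(t),\widetilde{\gamma}(t))$ is $C^1$ with values in $C_{\mathrm{ub}}(\R)$ using Proposition~\ref{p:gamma} and Corollary~\ref{C:local_v}, invoke \cite[Theorem~4.3.1]{LUN} to get the regularity of $\gamma - r$, and subtract the regularity of $\gamma$ from Proposition~\ref{p:gamma}. Your additional bookkeeping (smoothness and periodicity of $\widetilde{\Phi}_0$, the domain membership $\widetilde{\Phi}_0^* v_0 \in C_{\mathrm{ub}}^2(\R)$, and the derivative count forcing $j=2$, $m\geq 3$ in Proposition~\ref{p:gamma}) merely spells out what the paper leaves implicit.
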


So, we can apply the convective heat operator to $\partial_t - d\partial_\xx^2 - a\partial_\xx$ to~\eqref{e:intgamma2}, which leads us to the desired perturbed viscous Hamilton-Jacobi equation 
\begin{align} \label{e:hamjac}
\left(\partial_t - d\partial_\xx^2 - a\partial_\xx\right)\left(\gamma - r\right) = \nu \gamma_\xx^2 + G(z,v,\gamma,\widetilde{\gamma}),
\end{align}
Indeed, up to the correction term $r(t)$ and the perturbation 
$G(z,v,\gamma,\widetilde{\gamma})$, which are expected to exhibit higher-order temporal decay, equation~\eqref{e:hamjac} coincides with the Hamilton-Jacobi equation~\eqref{e:HamJac}. 

\subsection{Application of the Cole-Hopf transform}

We apply the Cole-Hopf transform to remove the critical nonlinear term $\nu \gamma_\xx^2$ in~\eqref{e:hamjac}. Thus, we introduce the new variable
\begin{align} \label{e:defy} y(t) = \re^{\frac{\nu}{d} \left(\gamma(t) - r(t)\right)} - 1,\end{align}
which satisfies
\begin{align}\label{e:regy} y \in C\big([0,\tau_{\max}),C_{\mathrm{ub}}(\R)\big) \cap C\big((0,\tau_{\max}),C_{\mathrm{ub}}^2(\R)\big) \cap C^1\big((0,\tau_{\max}),C_{\mathrm{ub}}(\R)\big),\end{align}
by Proposition~\ref{p:gamma} and Corollary~\ref{C:local_r}. We arrive at the convective heat equation
\begin{align} \left(\partial_t - d\partial_\xx^2 - a\partial_\xx\right)y = 2\nu r_\xx y_\xx + \frac{\nu}{d}\left(\nu r_\xx^2 + G(z,v,\gamma,\widetilde{\gamma})\right)\left(y+1\right), \label{e:colehopf} \end{align}
which is linear in $y$.

By Proposition~\ref{p:gamma} the phase modulation $\gamma$ vanishes on $[0,1]$. Hence, in the upcoming nonlinear argument the Cole-Hopf variable $y(t)$ can be controlled by the residual $r(t)$ through
\begin{align} \label{e:ysmall}
y(t) = \re^{-\frac{\nu}{d} r(t)} - 1.
\end{align}
for $t \in [0,\tau_{\max})$ with $t \leq 1$. On the other hand, we will control $y(t)$ through the Duhamel formulation
\begin{align}
\begin{split}
y(t) &= \re^{\left(d\partial_\xx^2 + a\partial_\xx\right) (t-1)} y(1) + \int_1^t \re^{\left(d\partial_\xx^2 + a\partial_\xx\right) (t-s)} \Non_c(r(s),y(s),z(s),v(s),\gamma(s),\widetilde{\gamma}(s)) \de s,
\end{split}
\label{e:inty}
\end{align}
for $t \in [0,\tau_{\max})$ with $t \geq 1$, where the nonlinearity is given by
\begin{align*}
\Non_c(r,y,z,v,\gamma,\widetilde{\gamma}) = 2\nu r_\xx y_\xx + \frac{\nu}{d}\left(\nu r_\xx^2 + G(z,v,\gamma,\widetilde{\gamma})\right)\left(y+1\right).
\end{align*}
The reason to use the formula~\eqref{e:ysmall} for short-time control of $y(t)$ (rather than its Duhamel formulation) is that the nonlinear term $r_\xx(s)^2$ in $\mathcal{N}_c$ obeys a nonintegrable short-time bound, which blows up as $1/s$ as $s \downarrow 0$, cf.~Corollary~\ref{C:local_r}. 

Using Proposition~\ref{prop:semiapp} and Lemma~\ref{lem:nlboundsmod3}, one readily obtains the relevant nonlinear estimate.

\begin{lemma}\label{lem:nlboundsmod4}
Fix a constant $C > 0$. Then, we have
\begin{align*}
\begin{split}
\|\Non_c(r,y,z,v,\gamma,\widetilde{\gamma})\|_\infty &\lesssim \|r_\xx\|_\infty \|y_\xx\|_\infty + \|r_\xx\|_\infty^2 + \left(\|v\|_\infty + \|\gamma_\xx\|_\infty\right)\|v\|_\infty^2 + \left(\|z\|_\infty + \|\gamma_\xx\|_\infty\right)\|z\|_\infty\\ 
&\qquad +\, \|\gamma_\xx\|_{W^{1,\infty}}\|\gamma_{\xx\xx}\|_{W^{1,\infty}} + \|v\|_\infty \left(\|\widetilde{\gamma}\|_\infty + \|\gamma_{\xx\xx}\|_\infty + \|\gamma_{\xx}\|^2_\infty\right) + \|\gamma_\xx\|^3_\infty,
\end{split}
\end{align*}
for $z \in C_{\mathrm{ub}}(\R)$, $v \in C_{\mathrm{ub}}(\R)$, $r,y \in C_{\mathrm{ub}}^1(\R)$ and $(\gamma,\widetilde{\gamma}) \in C_{\mathrm{ub}}^3(\R) \times C_{\mathrm{ub}}(\R)$ satisfying $\|v\|_\infty, \|y\|_\infty \leq C$ and $\|\gamma_\xx\|_\infty \leq \frac{1}{2}$.
\end{lemma}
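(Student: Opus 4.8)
The plan is to expand $\Non_c$ into its three constituent pieces and bound each in $L^\infty$, the only nontrivial input being the nonlinear bounds on $\mathcal Q_p, \mathcal R_p, \mathcal S_p$ from Lemma~\ref{lem:nlboundsmod3}. Since $\|y\|_\infty \leq C$, multiplication by $y+1$ costs only a harmless constant, so it suffices to estimate $\|r_\xx y_\xx\|_\infty$, $\|r_\xx^2\|_\infty$ and $\|G(z,v,\gamma,\widetilde{\gamma})\|_\infty$. The first two are immediate: $\|r_\xx y_\xx\|_\infty \leq \|r_\xx\|_\infty\|y_\xx\|_\infty$ and $\|r_\xx^2\|_\infty = \|r_\xx\|_\infty^2$, which already produce the first two terms on the right-hand side of the asserted estimate.

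It then remains to control $\|G\|_\infty$, and here I would use that all coefficient functions appearing in $G$ are bounded: $A_h(f_p)$ is a $1$-periodic continuous function by Proposition~\ref{prop:semiapp} (its defining Fourier series converges absolutely), hence $\|A_h(f_p)\|_\infty$ is a finite constant, while $\widetilde{\Phi}_0 \in H^m_{\mathrm{per}}(0,1)$ for every $m$ so that $\widetilde{\Phi}_0$, $\partial_\xx\widetilde{\Phi}_0$ and $\partial_\xx^2\widetilde{\Phi}_0$ all lie in $L^\infty(\R)$ with constant bounds. Consequently
$$\|G\|_\infty \lesssim \|\partial_\xx(\gamma_\xx^2)\|_\infty + \|\mathcal Q_p(z,v,\gamma)\|_\infty + \|\mathcal R_p(z,v,\gamma,\widetilde{\gamma})\|_\infty + \|\mathcal S_p(z,v,\gamma)\|_\infty,$$
where $\|\partial_\xx(\gamma_\xx^2)\|_\infty = 2\|\gamma_\xx\gamma_{\xx\xx}\|_\infty \leq 2\|\gamma_\xx\|_{W^{1,\infty}}\|\gamma_{\xx\xx}\|_{W^{1,\infty}}$ and the three remaining norms are bounded via Lemma~\ref{lem:nlboundsmod3}. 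Adding these contributions and absorbing the subdominant monomials --- $\|z\|_\infty\|\gamma_\xx\|_\infty$ from $\mathcal R_p$ and $\mathcal S_p$ into $(\|z\|_\infty+\|\gamma_\xx\|_\infty)\|z\|_\infty$, and $\|v\|_\infty\|\gamma_\xx\|_\infty^2$ from $\mathcal S_p$ into $\|v\|_\infty(\|\widetilde{\gamma}\|_\infty+\|\gamma_{\xx\xx}\|_\infty+\|\gamma_\xx\|_\infty^2)$ --- reproduces precisely the remaining terms in the statement. Combining with the bounds on $\|r_\xx y_\xx\|_\infty$ and $\|r_\xx^2\|_\infty$ then finishes the proof, noting that the $1$-periodicity of $A_h(f_p)$ and of the derivatives of $\widetilde{\Phi}_0$ lets one pass from $\|G\|_\infty$ to the $L^\infty$-norms of the $\mathcal Q_p$, $\mathcal R_p$, $\mathcal S_p$ pieces without loss.

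The argument is entirely routine and no delicate estimate is involved; the main thing to be careful about is the bookkeeping, namely verifying that under the standing smallness assumptions $\|v\|_\infty, \|y\|_\infty \leq C$ and $\|\gamma_\xx\|_\infty \leq \frac{1}{2}$ every monomial produced by Lemma~\ref{lem:nlboundsmod3} is indeed dominated by one of the terms displayed in the statement, and that $G$ carries no genuine loss of regularity relative to the $\mathcal Q_p$, $\mathcal R_p$, $\mathcal S_p$ nonlinearities.
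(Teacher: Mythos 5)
Your proposal is correct and follows exactly the route the paper intends: the paper's one-line proof cites Proposition~\ref{prop:semiapp} (boundedness and periodicity of $A_h(f_p)$) and Lemma~\ref{lem:nlboundsmod3} (bounds on $\mathcal Q_p$, $\mathcal R_p$, $\mathcal S_p$), which, together with the boundedness of $\widetilde{\Phi}_0$ and its first two derivatives and the trivial estimates on $r_\xx y_\xx$, $r_\xx^2$, $\partial_\xx(\gamma_\xx^2)$ and the factor $y+1$, is precisely your argument. The bookkeeping you perform (absorbing $\|z\|_\infty\|\gamma_\xx\|_\infty$ and $\|v\|_\infty\|\gamma_\xx\|_\infty^2$ into the displayed terms) matches the statement, so there is nothing to add.
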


\section{Nonlinear stability analysis} \label{sec:nonlinearstab}

We prove our main result, Theorem~\ref{t:mainresult}, by applying the linear estimates, established in Propositions~\ref{prop:full},~\ref{prop:semexp},~\ref{prop:semdif} and~\ref{prop:semiprinrec} and Corollary~\ref{cor:der}, and the nonlinear estimates, established in Lemmas~\ref{lem:nlboundsunmod},~\ref{lem:nlboundsmod},~\ref{lem:nlboundsmod3} and~\ref{lem:nlboundsmod4}, to the nonlinear iteration scheme, which consists of the integral equations~\eqref{e:intvt},~\eqref{e:intgamma},~\eqref{e:intz},~\eqref{e:intr} and~\eqref{e:inty}.

\begin{proof}[Proof of Theorem~\ref{t:mainresult}] We close a nonlinear iteration scheme, controlling the unmodulated perturbation $\vt \in C\big([0,T_{\max}),C_{\mathrm{ub}}(\R)\big) \cap C\big((0,T_{\max}),C_{\mathrm{ub}}^2(\R)\big)$, the residuals $r,z \in C\big([0,\tau_{\max}),C_{\mathrm{ub}}(\R)\big) \cap C\big((0,\tau_{\max}),C_{\mathrm{ub}}^2(\R)\big)$, the phase modulation $\gamma \in C\big([0,\tau_{\max}),C_{\mathrm{ub}}^4(\R)\big) \cap C^1\big([0,\tau_{\max}),C_{\mathrm{ub}}^2(\R)\big)$ and the Cole-Hopf variable $y \in C\big([0,\tau_{\max}),C_{\mathrm{ub}}(\R)\big) \cap C\big((0,\tau_{\max}),C_{\mathrm{ub}}^2(\R)\big)$. 
\bigskip\\
\noindent\textbf{Short-time argument.} By Proposition~\ref{p:local_unmod} there exist constants $K_1, r_1 > 0$ and a time $t_1 \in (0,\tfrac{1}{4}\min\{1,T_{\max}\})$, which are independent of $v_0$, such that, if $E_0 < r_1$, then it holds $\|\vt(t)\|_\infty + \sqrt{t}\|\vt(t)\|_{W^{1,\infty}} \leq K_1 E_0$ for all $t \in [0,4t_1]$. Next, we iteratively apply Proposition~\ref{p:local_unmod} with initial condition $\vt(t_i) \in C_{\mathrm{ub}}^i(\R)$ for $i = 1,2,3,$ to yield constants $K_{i+1},r_{i+1}>0$ and a time $t_{i+1} \in (t_i,2t_1)$, which are independent of $\vt(t_i)$, such that we have $\vt \in C\big((t_i,T_{\max}),C_{\mathrm{ub}}^{i+1}(\R)\big)$ and, if $\|\vt(t_i)\|_{W^{i,\infty}} < r_{i+1}$, then it holds
\begin{align*}{\|\vt(t)\|_{W^{i,\infty}} + \sqrt{t}\,\|\vt(t)\|_{W^{i+1,\infty}} \leq K_{i+1} \|\vt(t_i)\|_{W^{i,\infty}}\leq K_1 \ldots K_{i+1} E_0/\sqrt{t_1 \ldots t_i}},\end{align*} 
for $t \in [t_i,t_{i+1}]$. All in all, we infer that there exist constants $K_*,r_* > 0$ and a time $t_* \in (0,\tfrac12 \min\{1,T_{\max}\})$, which are independent of $v_0$, such that, if $E_0 \leq r_*$, then we have
\begin{align} \label{e:shorttime}
\vt \in C\big([t_*,T_{\max}),C_{\mathrm{ub}}^4(\R)\big), \qquad \|\vt(t_*)\|_{W^{4,\infty}} \leq K_* E_0, \qquad \|\vt(t)\|_\infty + \sqrt{t}\,\|\vt(t)\|_{W^{1,\infty}} \leq K_* E_0,
\end{align}
for $t \in [0,2t_*]$.
\bigskip\\
\noindent\textbf{Template function.} Let $\varrho \colon [0,\infty) \to [0,1]$ be a smooth cutoff function which vanishes on $[0,t_*]$ and satisfies $\varrho(t) = 1$ for all $t \in [2t_*,\infty) \supset [1,\infty)$. By Propositions~\ref{p:local_unmod} and~\ref{p:gamma}, Corollaries~\ref{C:local_v} and~\ref{C:local_r} and identities~\eqref{e:regy} and~\eqref{e:shorttime}, the template function $\eta \colon [0,\tau_{\max}) \to \R$ given by\footnote{For the motivation of the choice of temporal weights in the template function $\eta(t)$, we refer to Remark~\ref{rem:choice_eta} below.}
\begin{align*}
\eta(t) &= \sup_{0\leq s\leq t} \left[\varrho(s)\left(\sum_{i = 0}^2 \frac{\|\partial_\xx^{2+i} \vt(s)\|_\infty}{(1+s)^{\frac{i}{2}} \log(2+s)^{1-\frac{i}{2}}} + \frac{\|z_{\xx\xx}(s)\|_\infty + \sqrt{1+s} \|z_{\xx}(s)\|_\infty}{\log(2+s)}\right) + \|\gamma(s)\|_\infty \right.\\
&\left.\phantom{\sum_{i = 0}^2} \qquad \ \ + \|y(s)\|_\infty + \sqrt{s}\, \|y_\xx(s)\|_\infty +  \sqrt{1+s}\left(\|\gamma_\xx(s)\|_\infty  + \frac{\|r(s)\|_\infty + \sqrt{s}\,\|r_{\xx}(s)\|_\infty}{\log(2+s)}\right) \right. \\
&\left.\phantom{\sum_{i = 0}^2} \qquad\ \ + \|\vt(s)\|_\infty + \frac{\sqrt{s}\,\|\vt_\xx(s)\|_\infty}{\sqrt{1+s}} + \frac{1+s}{\log(2+s)}\left(\|z(s)\|_\infty + \left\|(\gamma_{\xx\xx}(s),\widetilde{\gamma}(s))\right\|_{W^{2,\infty} \times W^{2,\infty}}\right)\right],
\end{align*}
is well-defined and continuous, where we recall $\widetilde{\gamma}(t) = \partial_t \gamma(t) - a\gamma_\xx(t)$. Moreover, if $\tau_{\max} < \infty$, then it holds
\begin{align}
 \lim_{t \uparrow \tau_{\max}} \eta(t) \geq \frac12. \label{e:blowupeta}
\end{align}
\smallskip\\
\noindent\textbf{Approach.} Our aim is to prove that there exists a constant $C > 1$ such that for all $t \in [0,\tau_{\max})$ with $\eta(t) \leq \frac{1}{2}$ we have the key inequality
\begin{align}
\eta(t) \leq C\left(E_0 + \eta(t)^2\right). \label{e:etaest}
\end{align}
Then, taking
$$\epsilon = \min\left\{\frac{1}{4C^2},r_*\right\} \qquad M_0 = 2C,$$
it follows by the continuity, monotonicity and non-negativity of $\eta$ that, provided $E_0 \in (0,\epsilon)$, we have $\eta(t) \leq M_0E_0 = 2CE_0 < \frac{1}{2}$ for all $t \in [0,\tau_{\max})$. Indeed, given $t \in [0,\tau_{\max})$ with $\eta(s) \leq 2CE_0$ for each $s \in [0,t]$, we arrive at
$$\eta(t) \leq C\left(E_0 + 4C^2E_0^2\right) < 2CE_0,$$
by estimate~\eqref{e:etaest}. Thus, if the key inequality~\eqref{e:etaest} is satisfied, then we have $\eta(t) \leq 2CE_0 < \frac12$, for all $t \in [0,\tau_{\max})$, which implies by~\eqref{e:blowupeta} that it must hold $\tau_{\max} = \infty$. Consequently, $\eta(t) \leq M_0E_0$ is satisfied for all $t \geq 0$, which yields~\eqref{e:mtest10} and~\eqref{e:mtest2} for any $M \geq M_0$. In addition, recalling the definitions~\eqref{e:defv} and~\eqref{e:defz} of $v(t)$ and $z(t)$, respectively, we obtain
\begin{align*} 
\left\|u\left(\cdot-\gamma(\cdot,t),t\right)-\phi_0\right\|_\infty &\leq \|z(t)\|_\infty + \left\|k_0 \partial_k \phi(\cdot;k_0)\right\|_\infty \|\gamma_\zeta(t)\|_\infty \leq \frac{ME_0}{\sqrt{1+t}},
\end{align*}
for $t \geq 0$ and $M \geq M_0(1+\|k_0\partial_k \phi(\cdot;k_0)\|_\infty)$, which establishes~\eqref{e:mtest11}. Similarly, using Taylor's theorem, we estimate
\begin{align*}
\left\|u\left(\cdot-\gamma(\cdot,t),t\right)-\phi\left(\cdot;k_0(1+\gamma_\xx(\cdot,t)\right)\right\|_\infty 
&\leq \|z(t)\|_\infty + \frac{k_0^2}{2}\sup_{k \in U} \left\|\partial_{kk} \phi(\cdot;k)\right\|_\infty \|\gamma_\xx(t)\|^2_\infty\\ 
&\leq \frac{ME_0 \log(2+t)}{1+t}
\end{align*}
for $t \geq 0$ and $M \geq M_0(1 + k_0^2\sup_{k \in U} \|\partial_{kk} \phi(\cdot;k)\|_\infty)$, which yields~\eqref{e:mtest12}. 

All that remains is to prove the key inequality~\eqref{e:etaest} and then verify estimate~\eqref{e:mtest3}. In the following we bound the terms arising in the template function $\eta(t)$ one by one.
\bigskip\\
\noindent\textbf{Bounds on $z(t)$, $\gamma_{\xx\xx}(t)$ and $\widetilde{\gamma}(t)$.} 
Let $t \in [0,\tau_{\max})$ with $\eta(t) \leq \frac{1}{2}$. First, we observe that $v(s) = z(s) + k_0 \partial_k \phi(\cdot;k_0) \gamma_\xx(s)$ and $\partial_t \gamma(s) = \widetilde{\gamma}(s) + a\gamma_\xx(s)$ can be bounded as
\begin{align}
\begin{split}
\|\partial_\xx^m v(s)\|_\infty &\lesssim \|\partial_\xx^m z(s)\|_\infty + \|\gamma_\xx(s)\|_{W^{2,\infty}}, \qquad \|\partial_t \gamma(s)\|_\infty \lesssim \|\widetilde{\gamma}(s)\|_\infty + \|\gamma_\xx(s)\|_\infty.
\end{split}
\label{e:vbound} \end{align}
for $m = 0,1,2$ and $s \in (0,t]$. So, employing Lemma~\ref{lem:nlboundsmod} we obtain
\begin{align} \label{e:nlest100}
\|\mathcal Q(v(s),\gamma(s))\|_\infty, \|\mathcal R(v(s),\gamma(s),\partial_t \gamma(s))\|_\infty, \|\mathcal S(v(s),\gamma(s))\|_\infty &\lesssim (1+s)^{-1}\eta(s)^2,
\end{align}
and
\begin{align*}
\|\mathcal S(v(s),\gamma(s))\|_{W^{1,\infty}} &\lesssim \frac{\log(2+s)}{1+s}\eta(s)^2,
\end{align*}
for $s \in [0,t]$, where we use $\eta(t) \leq \frac{1}{2}$ and the fact that, if $s \leq 2t_* \leq 1$, then we have $\gamma(s) \equiv 0$ by Proposition~\ref{p:gamma}. Hence, applying Proposition~\ref{prop:semexp} we arrive at
\begin{align} \label{e:nlest1}
\begin{split}
&\left\|\int_0^t S_e(t-s) \mathcal{N}\left(v(s),\gamma(s),\partial_t \gamma(s)\right) \de s\right\|_{\infty} \lesssim \int_0^t \left\|S_e(t-s)\mathcal Q(v(s),\gamma(s))\right\|_\infty \de s \\ &\qquad\qquad\qquad +\, \int_0^t \left\|S_e(t-s) \partial_\xx \left(\mathcal{R}\left(v(s),\gamma(s),\partial_t \gamma(s)\right) + \partial_\xx \mathcal S(v(s),\gamma(s))\right) \right\|_{\infty} \de s\\
&\qquad \lesssim \int_0^t \frac{\eta(s)^2}{(1+t-s)^{\frac{11}{10}} (1+s)} \de s + \int_0^t \frac{\log(2+s)\eta(s)^2}{(1+t-s)^{\frac{11}{10}} (1+s)}\left(1+\frac{1}{\sqrt{t-s}}\right) \de s\\ 
&\qquad \lesssim \frac{\eta(t)^2\log(2+t)}{1+t}.
\end{split}
\end{align}
for all $t \in [0,\tau_{\max})$ with $\eta(t) \leq \frac{1}{2}$. On the other hand, Proposition~\ref{prop:semdif} and estimate~\eqref{e:nlest100} yield
\begin{align}
\left\|\int_0^t S_r(t-s) \mathcal{N}\left(v(s),\gamma(s),\partial_t \gamma(s)\right) \de s\right\|_\infty & \lesssim \int_0^t \frac{\eta(s)^2}{(1+t-s)(1+s)} \de s \lesssim \frac{\eta(t)^2\log(2+t)}{1+t},\label{e:nlest2}
\end{align}
and 
\begin{align}
\begin{split}
\left\|(\partial_t - a\partial_\xx)^j \partial_\xx^l \int_0^t S_p^0(t-s) \mathcal{N}\left(v(s),\gamma(s),\partial_t \gamma(s)\right) \de s\right\|_\infty & \lesssim \int_0^t \frac{\eta(s)^2}{(1+t-s)(1+s)} \de s\\
&\lesssim \frac{\eta(t)^2\log(2+t)}{1+t},\end{split} \label{e:nlest22}
\end{align}
for all $t \in [0,\tau_{\max})$ with $\eta(t) \leq \frac{1}{2}$ and $j,l \in \mathbb{N}_0$ with $2 \leq l + 2j \leq 4$, where we used $S_p^0(0) = 0$ when taking the temporal derivative. Thus, using Propositions~\ref{prop:semexp} and~\ref{prop:semdif}, the decomposition~\eqref{e:semdecomp} of $\smash{\widetilde{S}(t)}$ and estimates~\eqref{e:vbound},~\eqref{e:nlest1},~\eqref{e:nlest22} and~\eqref{e:nlest2}, we bound the right-hand sides of~\eqref{e:intgamma} and~\eqref{e:intz} and obtain
\begin{align} \label{e:nlest3}
\begin{split}
\|(\gamma_{\xx\xx}(t),\widetilde{\gamma}(t))\|_{W^{2,\infty} \times W^{2,\infty}}, \|z(t)\|_\infty \lesssim \left(E_0 + \eta(t)^2\right)\frac{\log(2+t)}{1+t},
\end{split}
\end{align}
for all $t \in [0,\tau_{\max})$ with $\eta(t) \leq \frac{1}{2}$.
\bigskip\\
\noindent\textbf{Bounds on $r(t)$ and $r_\xx(t)$.} Let $t \in [0,\tau_{\max})$ with $\eta(t) \leq \frac{1}{2}$. We employ Lemma~\ref{lem:nlboundsmod3} and estimate~\eqref{e:vbound} to bound
\begin{align}
\label{e:nlest71}
\begin{split}
&\|\mathcal Q_p(z(s),v(s),\gamma(s))\|_\infty, \|\mathcal R_p(z(s),v(s),\gamma(s),\partial_t \gamma(s))\|_\infty, \|\mathcal S_p(z(s),v(s),\gamma(s))\|_\infty\\ 
&\qquad \lesssim \frac{\log(2+s)}{(1+s)^{\frac{3}{2}}}\eta(s)^2,
\end{split}
\end{align}
for $s \in [0,t]$. Hence, applying Proposition~\ref{prop:semdif} we bound
\begin{align} \label{e:nlest7}
\begin{split}
\left\|\partial_\xx^{m+1} \int_0^t S_p^0(t-s) \mathcal R_p(z(s),v(s),\gamma(s),\widetilde{\gamma}(s))\de s \right\|_\infty &\lesssim \int_0^t \frac{\eta(s)^2\log(2+s)}{(1+t-s)^{\frac{1+m}{2}} (1+s)^{\frac{3}{2}}} \de s\\ &\lesssim \frac{\eta(t)^2}{(1+t)^{\frac{1+m}{2}}},
\end{split}
\end{align}
and, analogously,
\begin{align} \label{e:nlest78}
\begin{split}
\left\|\partial_\xx^{m+2-i} \int_0^t S_p^i(t-s) \mathcal S_p(z(s),v(s),\gamma(s))\de s \right\|_\infty &\lesssim \frac{\eta(t)^2}{(1+t)^{\frac{1+m}{2}}},
\end{split}
\end{align}
for all $t \in [0,\tau_{\max})$ with $\eta(t) \leq \frac{1}{2}$ and $i,m \in \{0,1\}$. In addition, with the aid of Proposition~\ref{prop:semiprinrec} and estimate~\eqref{e:nlest71} we establish
\begin{align} \label{e:nlest77}
\begin{split}
\left\|\partial_\xx^m \int_0^t \widetilde{S}_r^0(t-s) \mathcal Q_p(z(s),v(s),\gamma(s)) \de s\right\|_\infty &\lesssim \int_0^t \frac{\eta(s)^2 \log(2+s) }{(t-s)^{\frac{m}{2}}\sqrt{1+t-s}(1+s)^{\frac{3}{2}}} \de s\\ 
&\lesssim \frac{\eta(t)^2}{(1+t)^{\frac{1+m}{2}}},
\end{split}
\end{align}
and, analogously,
\begin{align} \label{e:nlest79}
\begin{split}
&\left\|\partial_\xx^m \int_0^t \widetilde{S}_r^1(t-s) \mathcal R_p(z(s),v(s),\gamma(s),\widetilde{\gamma}(s)) \de s\right\|_\infty, \left\|\partial_\xx^m \int_0^t \widetilde{S}_r^2(t-s) \mathcal S_p(z(s),v(s),\gamma(s)) \de s\right\|_\infty\\ &\qquad \lesssim \frac{\eta(t)^2}{(1+t)^{\frac{1+m}{2}}},
\end{split}
\end{align}
for $m = 0,1$ and all $t \in [0,\tau_{\max})$ with $\eta(t) \leq \frac{1}{2}$. Moreover, Proposition~\ref{prop:semiprinrec} yields
\begin{align} \label{e:nlest4}
\begin{split}
\left\|\partial_\xx^m \int_0^t \widetilde{S}_r^0(t-s)\left(f_p \gamma_\xx(s)^2\right) \de s\right\|_\infty &\lesssim \int_0^t \frac{\eta(s)^2}{\sqrt{1+t-s} (t-s)^{\frac{m}{2}} (1+s)} \de s \lesssim \frac{\eta(t)^2 \log(2+t)}{(1+t)^{\frac{1+m}{2}}},
\end{split}
\end{align}
for $m = 0,1$ and all $t \in [0,\tau_{\max})$. Next, we apply Propositions~\ref{prop:semiprinrec} and~\ref{prop:semiapp} to establish
\begin{align} \label{e:nlest5}
\begin{split}
\left\|\partial_\xx \int_0^t \re^{\left(d\partial_\xx^2 + a\partial_\xx\right) (t-s)}\left(A_h(f_p) \gamma_\xx(s)^2\right) \de s\right\|_\infty &\lesssim \int_0^t \frac{\eta(s)^2}{\sqrt{t-s} (1+s)} \de s \lesssim \frac{\eta(t)^2\log(2+t)}{\sqrt{1+t}}.
\end{split}
\end{align}
for all $t \in [0,\tau_{\max})$. Similarly, using that $\partial_\xx$ commutes with $\smash{\re^{\left(d\partial_\xx^2 + a\partial_\xx\right) (t-s)}}$, we obtain
\begin{align} \label{e:nlest6}
\begin{split}
&\left\|\partial_\xx^2 \int_0^t \re^{\left(d\partial_\xx^2 + a\partial_\xx\right) (t-s)}\left(A_h(f_p) \gamma_\xx(s)^2\right) \de s\right\|_\infty\\
&\qquad \lesssim \int_{0}^{\max\{0,t-1\}} \frac{\eta(s)^2}{(t-s)(1+s)} \de s + \int_{\max\{0,t-1\}}^t \frac{\eta(s)^2}{\sqrt{t-s}(1+s)} \de s \lesssim \frac{\eta(t)^2\log(2+t)}{1+t},
\end{split}
\end{align}
for all $t \in [0,\tau_{\max})$. Thus, using the estimates~\eqref{e:nlest7},~\eqref{e:nlest78},~\eqref{e:nlest77},~\eqref{e:nlest79},~\eqref{e:nlest4},~\eqref{e:nlest5} and~\eqref{e:nlest6} and Proposition~\ref{prop:semiprinrec}, we bound the right-hand side of~\eqref{e:intr} by
\begin{align} \label{e:nlest8}
\begin{split}
t^{\frac{m}{2}} \|\partial_\xx^m r(t)\|_\infty &\lesssim \left(E_0 + \eta(t)^2\right)\frac{\log(2+t)}{\sqrt{1+t}},
\end{split}
\end{align}
for $m = 0,1$ and all $t \in [0,\tau_{\max})$ with $\eta(t) \leq \frac{1}{2}$.
\bigskip\\
\noindent\textbf{Bounds on $y(t)$ and $y_\xx(t)$.} We use the identity~\eqref{e:ysmall}, the estimate~\eqref{e:nlest8} and $\eta(t) \leq \frac12$ to establish the short-time bound
\begin{align} \label{e:yshort}
t^{\frac{m}{2}}\|\partial_\xx^m y(t)\|_\infty \lesssim t^{\frac{m}{2}}\|\partial_\xx^m r(t)\|_\infty \lesssim E_0 + \eta(t)^2,
\end{align}
for $m = 0,1$ and all $t \in [0,\tau_{\max})$ with $t \leq 1$ and $\eta(t) \leq \frac{1}{2}$. 

Now let $t \in [0,\tau_{\max})$ with $t \geq 1$ and $\eta(t) \leq \frac{1}{2}$. We first apply Lemma~\ref{lem:nlboundsmod4} and estimate~\eqref{e:vbound} to establish
\begin{align*}
\| \Non_c(r(s),y(s),z(s),v(s),\gamma(s),\widetilde{\gamma}(s))\|_\infty \lesssim \frac{\eta(s)^2 \log(2+s)}{\left(1+s\right)^{\frac{3}{2}}},
\end{align*}
for $s \in [1,t]$, where we use $\eta(t) \leq \frac{1}{2}$. So, using Proposition~\ref{prop:semiprinrec} and~\eqref{e:yshort}, we bound
\begin{align} \label{e:nlest91} 
\begin{split}
\left\|\partial_\xx^m \re^{\left(d\partial_\xx^2 + a\partial_\xx\right) (t-1)} y(1)\right\|_\infty \lesssim \frac{\|y(1)\|_{W^{m,\infty}}}{(1+t)^{\frac{m}{2}}} \lesssim \frac{E_0 + \eta(t)^2}{(1+t)^{\frac{m}{2}}},
\end{split}
\end{align}
and
\begin{align} \label{e:nlest92}
\begin{split}
\left\|\partial_\xx^m \int_1^t \re^{\left(d\partial_\xx^2 + a\partial_\xx\right) (t-s)} \Non_c(r(s),y(s),z(s),v(s),\gamma(s),\widetilde{\gamma}(s)) \de s\right\|_\infty &\lesssim \int_1^t \frac{\eta(s)^2 \log(2+s)}{(t-s)^{\frac{m}{2}} (1+s)^{\frac{3}{2}}} \de s\\ &\lesssim \frac{\eta(t)^2}{(1+t)^{\frac{m}{2}}}.
\end{split}
\end{align}
for $m = 0,1$ and all $t \in [0,\tau_{\max})$ with $t \geq 1$ and $\eta(t) \leq \frac{1}{2}$. Combining~\eqref{e:nlest91} and~\eqref{e:nlest92} with the short-time bound~\eqref{e:yshort} yields
\begin{align} \label{e:nlest9}
\begin{split}
t^{\frac{m}{2}}\|\partial_\xx^m y(t)\|_\infty \lesssim E_0 + \eta(t)^2,
\end{split}
\end{align}
for $m = 0,1$ and all $t \in [0,\tau_{\max})$ with $\eta(t) \leq \frac{1}{2}$.
\bigskip\\
\noindent\textbf{Bounds on $\gamma(t)$ and $\gamma_\xx(t)$.} First, we consider the case $\nu \neq 0$. Taking the spatial derivative of
\begin{align*} \gamma(t) = r(t) + \frac{d}{\nu} \log(y(t) + 1),\end{align*}
yields
\begin{align*} \gamma_\xx(t) = r_\xx(t) + \frac{d y_\xx(t)}{\nu (1+y(t))},\end{align*}
for $t \in (0,\tau_{\max})$. We note that, since we have $\|y(t)\|_\infty \leq \eta(t) \leq \frac{1}{2}$ and $\nu \neq 0$, the above expressions are well-defined and it holds $\|\partial_\xx^m \gamma(t)\|_\infty \lesssim \|\partial_\xx^m r(t)\|_\infty + \|\partial_\xx^m y(t)\|_\infty$ for $t \in (0,\tau_{\max})$. Hence, using the estimates~\eqref{e:nlest8} and~\eqref{e:nlest9} and the fact that $\gamma(s) \equiv 0$ for $s \in [0,\tau_{\max})$ with $s \leq 1$ by Proposition~\ref{p:gamma}, we obtain
\begin{align} \label{e:nlest11} \|\partial_\xx^m \gamma(t)\|_\infty \lesssim \frac{E_0 + \eta(t)^2}{(1+t)^{\frac{m}{2}}},\end{align}
for $m = 0,1$ and $t \in [0,\tau_{\max})$ with $\eta(t) \leq \frac12$. 

Next, we consider the case $\nu = 0$. With the aid of Proposition~\ref{prop:semiprinrec} and estimate~\eqref{e:nlest71} we establish
\begin{align} \label{e:nlest70}
\begin{split}
\left\|\partial_\xx^m S_h^0(t)v_0\right\|_\infty &\lesssim \frac{E_0}{(1+t)^{\frac{m}{2}}},\\
\left\|\partial_\xx^m \int_0^t S_h^0(t-s) \mathcal Q_p(z(s),v(s),\gamma(s))\de s\right\|_\infty & \lesssim \int_0^t \frac{\eta(s)^2\log(2+s)}{(t-s)^{\frac{m}{2}} (1+s)^{\frac{3}{2}}} \de s \lesssim \frac{\eta(t)^2}{(1+t)^{\frac{m}{2}}},
\end{split}
\end{align}
and, analogously,
\begin{align} \label{e:nlest72}
\begin{split}
&\left\|\partial_\xx^m\! \int_0^t S_h^1(t-s) \mathcal R_p(z(s),v(s),\gamma(s),\widetilde{\gamma}(s))\de s\right\|_\infty, 
\left\|\partial_\xx^m\! \int_0^t S_h^2(t-s) \mathcal S_p(z(s),v(s),\gamma(s))\de s\right\|_\infty, \\
&\left\|\partial_\xx^m \int_0^t \re^{\left(d\partial_\xx^2 + a\partial_\xx\right) (t-s)} \left(A_h(f_p) \partial_\xx \left(\gamma_\xx(s)^2\right)\right) \de s\right\|_\infty \lesssim \frac{\eta(t)^2}{(1+t)^{\frac{m}{2}}},
\end{split}
\end{align}
for $m = 0,1$ and $t \in [0,\tau_{\max})$ with $t \geq 1$ and $\eta(t) \leq \frac12$. Thus, recalling that $\gamma(t)$ vanishes on $[0,1]$ by Proposition~\ref{p:gamma} and using estimates~\eqref{e:nlest8},~\eqref{e:nlest70} and~\eqref{e:nlest72}, we bound the right-hand side of~\eqref{e:intgamma2} and arrive at~\eqref{e:nlest11}.
\bigskip\\
\noindent\textbf{Bound on $z_{\xx\xx}(t)$.} Due to the fact that the cutoff function $\varrho$ vanishes on $[0,t_*]$, it suffices to focus on the case $t \geq t_*$. Thus, let $t \in [0,\tau_{\max})$ with $t \geq t_*$ and $\eta(t) \leq \frac12$. Thanks to~\eqref{e:shorttime} we find that~\eqref{e:defv2} is three times differentiable. Thus, we compute
\begin{align*}
v_{\xx\xx\xx}(\xx,s) &= \left(\vt_{\xx\xx\xx}(\xx-\gamma(\xx,s),s) + \phi_0'''(\xx-\gamma(\xx,s))\right)\left(1 - \gamma_\xx(\xx,s)\right)^3\\ 
&\qquad -\, 3\left(\vt_{\xx\xx}(\xx-\gamma(\xx,s),s)+\phi_0''(\xx-\gamma(\xx,s))\right)\gamma_{\xx\xx}(\xx,s)(1-\gamma_\xx(\xx,s))\\
&\qquad -\,\left(\vt_\xx(\xx-\gamma(\xx,s),s)+ \phi_0'(\xx-\gamma(\xx,s))\right)\gamma_{\xx\xx\xx}(\xx,s) - \phi_0'''(\xx),
\end{align*}
for $s \in [t_*,t]$ and $\xx \in \R$. With the aid of the mean value theorem we bound the latter as
\begin{align*}
\|v_{\xx\xx\xx}(s)\|_\infty \lesssim \|\vt_{\xx}(s)\|_{W^{2,\infty}} + \|\gamma(s)\|_{W^{3,\infty}} \lesssim  \sqrt{(1+s)\log(2+s)}\eta(s),
\end{align*}
for $s \in [2t_*,t]$, where we use $\eta(t) \leq \frac{1}{2}$. So, employing Lemma~\ref{lem:nlboundsmod} and estimate~\eqref{e:vbound}, we establish
\begin{align*}
\left\|\mathcal{Q}(v(s),\gamma(s))\right\|_{W^{1,\infty}} & \lesssim \frac{\eta(s)^2}{\sqrt{s}\sqrt{1+s}},\\
\left\| \mathcal{R}(v(s),\gamma(s),\partial_s\gamma(s))\right\|_{W^{2,\infty}} & \lesssim \frac{\log(2+s) \eta(s)^2}{\sqrt{1+s}},\\
\left\|\mathcal{S}(v(s),\gamma(s))\right\|_{W^{3,\infty}} &\lesssim \sqrt{\log(2+s)} \eta(s)^2,
\end{align*}
for $s \in (0,t]$, where we use $\eta(t) \leq \frac{1}{2}$ and the fact that, if $s \leq 2t_* \leq 1$, then we have $\gamma(s) \equiv 0$ by Proposition~\ref{p:gamma}. Hence, using~\eqref{e:nlest100} and applying the second estimate in Corollary~\ref{cor:der}, we find
\begin{align} \label{e:nlesta}
\begin{split}
&\left\|\int_0^t \partial_\xx^2 \re^{\El_0(t-s)} \mathcal{N}\left(v(s),\gamma(s),\partial_t \gamma(s)\right) \de s\right\|_{\infty}\\ 
&\qquad \lesssim \int_0^t \left(\left\|\mathcal{Q}(v(s),\gamma(s))\right\|_{\infty} + \left\|\mathcal{R}(v(s),\gamma(s),\partial_t \gamma(s))\right\|_{\infty} + \left\|\mathcal{S}(v(s),\gamma(s))\right\|_{\infty} \phantom{\left(\frac{1}{\sqrt{t}}\right)} \right.\\
&\qquad \qquad \left. + \,
\frac{\left\|\mathcal{Q}(v(s),\gamma(s))\right\|_{W^{1,\infty}} + \left\|\mathcal{R}(v(s),\gamma(s),\partial_t \gamma(s))\right\|_{W^{2,\infty}} + \left\|\mathcal{S}(v(s),\gamma(s))\right\|_{W^{3,\infty}}}{\left(1+t-s\right)^{\frac{11}{10}}}\right.\\
&\qquad \qquad \left. \cdot \, \left(1+\frac{1}{\sqrt{t-s}}\right)\right) \de s\\
&\qquad \lesssim \int_0^t \eta(s)^2\left(\frac{1}{1+s} + \frac{\sqrt{\log(2+s)}}{\left(1+t-s\right)^{\frac{11}{10}}}\left(1+\frac{1}{\sqrt{t-s}}\right)\left(1+\frac{1}{\sqrt{s}}\right)\right) \de s \lesssim \log(2+t)\eta(t)^2,
\end{split}
\end{align}
for $t \in [0,\tau_{\max})$ with $t \geq t_*$ and $\eta(t) \leq \frac12$. Moreover, the first estimate in Corollary~\ref{cor:der} yields
\begin{align} \label{e:nlestaa}
\left\|\partial_\xx^2 \re^{\El_0 t} v_0\right\|_\infty \lesssim E_0, 
\end{align}
for $t \in [0,\tau_{\max})$ with $t \geq t_*$. On the other hand, with the aid of Proposition~\ref{prop:semdif} and estimate~\eqref{e:nlest100} we bound
\begin{align}
\begin{split}
\left\|\partial_\xx^2 \left(\left(\phi_0' + k_0 \partial_k \phi(\cdot;k_0)\right) S_p^0(t) v_0\right)\right\|_\infty &\lesssim E_0,\\
\left\|\partial_\xx^2 \int_0^t S_p^0(t-s) \mathcal{N}\left(v(s),\gamma(s),\partial_t \gamma(s)\right) \de s\right\|_\infty &\lesssim  \int_0^t \frac{\eta(s)^2}{1+s} \de s \lesssim \log(2+t)\eta(t)^2,
\end{split} \label{e:nlestb}
\end{align}
for $t \in [0,\tau_{\max})$ with $t \geq t_*$ and $\eta(t) \leq \frac12$. Thus, recalling the semigroup decomposition~\eqref{e:semidecomp} and combining estimates~\eqref{e:vbound},~\eqref{e:nlesta},~\eqref{e:nlestaa} and~\eqref{e:nlestb}, we bound the right-hand side of~\eqref{e:intz} and obtain
\begin{align} \label{e:nlestc}
\begin{split}
\varrho(t)\|z_{\xx\xx}(t)\|_\infty &\lesssim \log(2+t)\left(E_0 + \eta(t)^2\right).
\end{split}
\end{align}
for $t \in [0,\tau_{\max})$ with $\eta(t) \leq \frac12$.
\bigskip\\
\noindent\textbf{Bound on $z_{\xx}(t)$.} Interpolating between estimates~\eqref{e:nlest3} and~\eqref{e:nlestc} yields
\begin{align} \varrho(t)\|z_\xx(t)\|_\infty &\lesssim \frac{\log(2+t)}{\sqrt{1+t}}\left(E_0 + \eta(t)^2\right). \label{e:nlestd}\end{align}
for $t \in [0,\tau_{\max})$ with $\eta(t) \leq \frac12$.
\bigskip\\
\noindent\textbf{Bounds on $\vt(t)$, $\vt_\xx(t)$ and $\vt_{\xx\xx}(t)$.} Since the cutoff function $\varrho$ vanishes on $[0,t_*]$ and we have established the short-time bound~\eqref{e:shorttime}, it suffices to focus on the case $t \geq t_*$. Thus, let $t \in [0,\tau_{\max})$ with $t \geq t_*$ and $\eta(t) \leq \frac12$. As in the proof of Corollary~\ref{cor:psit} we note that $\|\gamma_\xx(t)\|_\infty \leq \eta(t) \leq \frac{1}{2}$ implies that the map $\psi_t \colon \R \to \R$ given by $\psi_t(\xx) = \xx - \gamma(\xx,t)$ is invertible. We rewrite $\psi_t(\psi_t^{-1}(\xx)) = \xx$ as $\psi_t^{-1}(\xx) - \xx = \gamma(\psi_t^{-1}(\xx),t)$ and estimate
\begin{align} 
\label{e:psiest}
\sup_{\xx \in \R} \left|\psi_t^{-1}(\xx) - \xx\right| &\leq \|\gamma(t)\|_\infty.\end{align}
Substituting $\xx$ by $\psi_t^{-1}(\xx)$ in~\eqref{e:defv2} and its spatial derivatives
\begin{align*} 
v_{\xx}(\xx,t) &= \left(\vt_{\xx}(\xx-\gamma(\xx,t),t) + \phi_0'(\xx-\gamma(\xx,t))\right)\left(1 - \gamma_\xx(\xx,t)\right) - \phi_0'(x),\\
v_{\xx\xx}(\xx,t) &= \left(\vt_{\xx\xx}(\xx-\gamma(\xx,t),t) + \phi_0''(\xx-\gamma(\xx,t))\right)\left(1 - \gamma_\xx(\xx,t)\right)^2 \\ 
&\qquad - \, \left(\vt_{\xx}(\xx-\gamma(\xx,t),t)+\phi_0'(\xx - \gamma(\xx,t))\right)\gamma_{\xx\xx}(\xx,t) - \phi_0''(x),\end{align*}
yields, after rearranging terms,
\begin{align*} 
\vt(\xx,t) &= v\left(\psi_t^{-1}(\xx),t\right) + \phi_0\left(\psi_t^{-1}(\xx)\right) - \phi_0(\xx),\\
\vt_{\xx}(\xx,t) &= \frac{v_\xx\left(\psi_t^{-1}(\xx),t\right) + \phi_0'\left(\psi_t^{-1}(\xx)\right)}{1-\gamma_\xx\left(\psi_t^{-1}(\xx),t\right)} - \phi_0'(\xx),\\
\vt_{\xx\xx}(\xx,t) &= \frac{v_{\xx\xx}\left(\psi_t^{-1}(\xx),t\right) + \phi_0''\left(\psi_t^{-1}(\xx)\right) + \left(\vt_\xx(\xx,t) + \phi_0'(\xx)\right)\gamma_{\xx\xx}\left(\psi_t^{-1}(\xx),t\right)}{\left(1-\gamma_\xx\left(\psi_t^{-1}(\xx),t\right)\right)^2} - \phi_0''(\xx).\end{align*}
We apply the mean value theorem and estimates~\eqref{e:vbound},~\eqref{e:nlest3},~\eqref{e:nlest11},~\eqref{e:nlestc},~\eqref{e:nlestd} and~\eqref{e:psiest} to the latter and arrive at
\begin{align*} 
\begin{split}
\rho(t)\|\partial_\xx^j \vt(t)\|_\infty &\lesssim \rho(t)\|v(t)\|_{W^{j,\infty}} + \|\gamma(t)\|_{W^{j,\infty}} \lesssim E_0 + \eta(t)^2,\\
\rho(t)\left\|\vt_{\xx\xx}(t)\right\|_\infty &\lesssim \rho(t)\|v(t)\|_{W^{2,\infty}} + \|\gamma(t)\|_{W^{2,\infty}} \lesssim \log(2+t)\left(E_0 + \eta(t)^2\right),
\end{split}
\end{align*}
for $j = 0,1$ and $t \in [0,\tau_{\max})$ with $\eta(t) \leq \frac12$. Combining the latter with the short-time bound~\eqref{e:shorttime} we obtain
\begin{align} 
\begin{split}
\left(\frac{t}{1+t}\right)^{\frac{j}{2}}\|\partial_\xx^j \vt(t)\|_\infty \lesssim E_0 + \eta(t)^2, \qquad \varrho(t)\|\vt_{\xx\xx}(t)\|_\infty \lesssim \log(2+t)\left(E_0 + \eta(t)^2\right),
\end{split}
\label{e:nlest12} \end{align}
for $j = 0,1$ and $t \in [0,\tau_{\max})$ with $\eta(t) \leq \frac12$.
\bigskip\\
\noindent\textbf{Bounds on $\partial_\xx^4 \vt(t)$.} Since the cutoff function $\varrho$ vanishes on $[0,t_*]$, it suffices to focus on the case $t \geq t_*$. Thus, let $t \in [0,\tau_{\max})$ with $t \geq t_*$ and $\eta(t) \leq \frac12$. We use Lemma~\ref{lem:nlboundsunmod} to bound
\begin{align}\label{e:nlest13}
\begin{split}
\left\|\widetilde{\mathcal N}(\vt(s))\right\|_{\infty} &\lesssim \eta(s)^2, \qquad \left\|\widetilde{\mathcal N}(\vt(s))\right\|_{W^{3,\infty}} \lesssim \sqrt{(1+s)\log(2+s)} \eta(s)^2,
\end{split}
\end{align}
for $s \in [t_*,t]$. First, using the last estimate in Corollary~\ref{cor:der} and the short-time bound~\eqref{e:shorttime} we estimate the linear term on the right-hand side of the Duhamel formula
\begin{align}
\vt(t) = \re^{\El_0 (t-t_*)} \vt(t_*) + \int_{t_*}^t \re^{\El_0(t-s)}\widetilde{\mathcal{N}}(\vt(s))\de s, \label{e:intvt2}
\end{align}
as
\begin{align} \label{e:nlest16}
\left\|\partial_\xx^4 \re^{\El_0(t-t_*)} \vt(t_*)\right\|_\infty \lesssim \|\vt(t_*)\|_{W^{4,\infty}} \lesssim E_0,
\end{align}
for $t \in [0,\tau_{\max})$ with $t \geq t_*$ and $\eta(t) \leq \frac12$. Second, the nonlinear bounds~\eqref{e:nlest13} and the second estimate in Corollary~\ref{cor:der} yield
\begin{align} \label{e:nlest15}
\begin{split}
\left\|\partial_\xx^4\! \int_{t_*}^t \re^{\El_0 (t-s)} \widetilde{\mathcal{N}}(\vt(s))\de s\right\|_\infty &\lesssim \int_{t_*}^t \eta(s)^2\left(1 + \frac{\sqrt{(1+s)\log(2+s)}}{(1+t-s)^{\frac{11}{10}}} \left(1+\frac{1}{\sqrt{t-s}}\right) \right) \de s\\ 
&\lesssim \eta(t)^2 (1+t),
\end{split}
\end{align}
for $t \in [0,\tau_{\max})$ with $t \geq t_*$ and $\eta(t) \leq \frac12$. Thus, using estimates~\eqref{e:nlest16} and~\eqref{e:nlest15}, we bound the right-hand side of~\eqref{e:intvt2} and obtain
\begin{align} \label{e:nlest17}
\begin{split}
\varrho(t)\left\|\partial_\xx^4 \vt(t)\right\|_\infty &\lesssim (1+t)\left(E_0 + \eta(t)^2\right),
\end{split}
\end{align}
for $t \in [0,\tau_{\max})$ with $\eta(t) \leq \frac12$.
\bigskip\\
\noindent\textbf{Bound on $\partial_\xx^3 \vt(t)$.} Interpolating between~\eqref{e:nlest12} and~\eqref{e:nlest17} we readily arrive at
\begin{align} \label{e:nlest18}
\begin{split}
\varrho(t)\left\|\partial_\xx^3 \vt(t)\right\|_\infty \lesssim \sqrt{(1+t)\log(2+t)} \left(E_0 + \eta(t)^2\right) .
\end{split}
\end{align}
for $t \in [0,\tau_{\max})$ with $t \geq t_*$ and $\eta(t) \leq \frac12$.
\bigskip\\
\noindent\textbf{Proof of key inequality.} By the estimates~~\eqref{e:nlest3},~\eqref{e:nlest8},~\eqref{e:nlest9},~\eqref{e:nlest11},~\eqref{e:nlestc},~\eqref{e:nlestd},~\eqref{e:nlest12},~\eqref{e:nlest17} and~\eqref{e:nlest18} it follows that there exists a constant $C \geq 1$, which is independent of $E_0$ and $t$, such that the key inequality~\eqref{e:etaest} is satisfied.
\bigskip\\
\noindent\textbf{Approximation by the viscous Hamilton-Jacobi equation.} We distinguish between the cases $\nu = 0$ and $\nu \neq 0$. In case $\nu = 0$, we define $\breve \gamma(t) = S_h^0(t) v_0 = \smash{\re^{(d\partial_\xx^2 + a\partial_\xx)t}} \widetilde{\Phi}_0^* v_0$. Clearly, $\breve \gamma \in C\big([0,\infty),C_{\mathrm{ub}}(\R)\big) \cap C\big((0,\infty),C_{\mathrm{ub}}^2(\R)\big) \cap C^1\big((0,\infty),C_{\mathrm{ub}}(\R)\big)$ is a classical solution to~\eqref{e:HamJac} having initial condition $\breve \gamma(0) = \smash{\widetilde{\Phi}_0^* v_0} \in C_{\mathrm{ub}}(\R)$. Noting that $\gamma(t) \equiv 0$ for $t \in [0,1]$ by Proposition~\ref{p:gamma}, we obtain by Proposition~\ref{prop:semiprinrec} a constant $C_1 \geq 1$ such that
\begin{align} \label{e:mtest200}
\begin{split}
t^{\frac{m}{2}}\left\|\partial_\xx^m \left(\gamma(t) - \breve{\gamma}(t)\right)\right\|_\infty = t^{\frac{m}{2}}\left\|\partial_\xx^m \breve{\gamma}(t)\right\|_\infty \leq \frac{C_1E_0}{\sqrt{1+t}}, 
\end{split}
\end{align}
for $t \in [0,1]$ and $m = 0,1$. On the other hand, recalling $\eta(t) \leq M_0E_0$ and using identity~\eqref{e:intgamma2} and estimates~\eqref{e:nlest70} and~\eqref{e:nlest72} we establish constants $C_2,C_3 \geq 1$ such that
\begin{align} \label{e:mtest21}
\begin{split}
\left\|\partial_\xx^m\!\left(\gamma(t) - \breve{\gamma}(t)\right)\right\|_\infty \leq C_2\left(\frac{E_0^2 + \eta(t)^2}{(1+t)^{\frac{m}{2}}} +  \left\|\partial_\xx^m r(t)\right\|_\infty\!\right) \leq \frac{C_3 E_0}{(1+t)^{\frac{m}{2}}} \left(E_0 + \frac{\log(2+t)}{\sqrt{1+t}}\right), 
\end{split}
\end{align}
for $t \geq 1$ and $m = 0,1$. Thus, combining~\eqref{e:mtest200} and~\eqref{e:mtest21} yields~\eqref{e:mtest3} upon taking $M \geq \max\{C_1,C_3\}$.

Next, we consider the case $\nu \neq 0$. Taking $E_0 = \|v_0\|_{\infty}$ sufficiently small, we observe that the functions $\breve{\gamma}, \breve{y} \in C\big([0,\infty),C_{\mathrm{ub}}(\R)\big) \cap C\big((0,\infty),C_{\mathrm{ub}}^2(\R)\big) \cap C^1\big((0,\infty),C_{\mathrm{ub}}(\R)\big)$ given by
\begin{align*} \breve \gamma(t) = \frac{d}{\nu}\log\left(1 + \breve{y}(t)\right) \qquad \breve{y}(t) = \re^{(d\partial_\xx^2 + a\partial_\xx)t}\left(\re^{\frac{\nu}{d}\widetilde{\Phi}_0^* v_0} - 1\right),\end{align*}
are well-defined and it holds $\|\breve{y}(t)\|_\infty \leq \frac12$ for $t \geq 0$, where we use the standard bounds on the analytic semigroup $\smash{\re^{(d\partial_\xx^2 + a\partial_\xx)t}}$ established in~Propositions~\ref{prop:semiprinrec}. One readily verifies that $\breve{\gamma} \in C\big([0,\infty),C_{\mathrm{ub}}(\R)\big) \cap C\big((0,\infty),C_{\mathrm{ub}}^2(\R)\big) \cap C^1\big((0,\infty),C_{\mathrm{ub}}(\R)\big)$ is a classical solution to the viscous Hamilton-Jacobi equation~\eqref{e:HamJac} with initial condition $\breve \gamma(0) = \smash{\widetilde{\Phi}_0^* v_0} \in C_{\mathrm{ub}}(\R)$. Recalling that $\gamma$ vanishes on $[0,1]$, we obtain by Proposition~\ref{prop:semiprinrec} a constant $C_1 \geq 1$ such that~\eqref{e:mtest200} holds for $t \in [0,1]$. Next, Taylor's theorem, the fact that $0 = S_p^0(1) = S_h^0(1) + \widetilde{S}_r^0(1)$, identities~\eqref{e:intr} and~\eqref{e:ysmall} and estimates~\eqref{e:nlest7},~\eqref{e:nlest78},~\eqref{e:nlest77},~\eqref{e:nlest79},~\eqref{e:nlest4} and~\eqref{e:nlest5} yield a constant $C_3 > 0$ such that
\begin{align} \label{e:mtest23}
\begin{split}
\|y(1) - \breve{y}(1)\|_\infty &\leq \|y(1) + \tfrac{\nu}{d} r(1)\|_\infty + \|\breve{y}(1) - \tfrac{\nu}{d} S_h^0(1) v_0\|_\infty + \tfrac{|\nu|}{d} \|r(1) - \widetilde{S}_r^0(1)v_0\|_\infty\\
&\leq C_3E_0^2,
\end{split}
\end{align}
where we use that $\eta(t) \leq M_0 E_0$ for all $t \geq 0$. Finally, we use $\breve{y}(t) = \smash{\re^{(d \partial_\xx^2 + a \partial_\xx) (t-1)} \breve{y}(1)}$, recall the definition~\eqref{e:defy} of $y(t)$, and apply the mean value theorem, Proposition~\ref{prop:semiprinrec}, identity~\eqref{e:inty} and estimates~\eqref{e:nlest92} and~\eqref{e:mtest23} and $\eta(t) \leq M_0E_0$ to bound
\begin{align*} 
\begin{split}
\left\|\gamma(t) - \breve{\gamma}(t)\right\|_\infty &\lesssim \|r(t)\|_\infty + \left\|y(t) - \breve{y}(t)\right\|_\infty \lesssim \left\|r(t)\right\|_\infty + E_0^2 + \eta(t)^2,\\
\left\|\gamma_\xx(t) - \breve{\gamma}_\xx(t)\right\|_\infty &\lesssim \|r_\xx(t)\|_\infty + \left\|y_\xx(t) - \breve{y}_\xx(t)\right\|_\infty + \left\|y(t) - \breve{y}(t)\right\|_\infty\left\|y_\xx(t)\right\|_\infty
\\ &\lesssim \left\|r_\xx(t)\right\|_\infty + \frac{E_0^2 + \eta(t)^2}{\sqrt{1+t}}.
\end{split}
\end{align*}
for $t \geq 1$. Thus, noting that $\eta(t) \leq M_0E_0$ yields constants $C_2,C_3 \geq 1$ such that~\eqref{e:mtest21} is satisfied for all $t \geq 1$. Combining the latter with~\eqref{e:mtest200} proves~\eqref{e:mtest3} upon taking $M \geq \max\{C_1,C_3\}$.
\end{proof}

\begin{remark}\label{rem:choice_eta}
\upshape
We motivate the choice of temporal weights in the template function $\eta(t)$ in the proof of Theorem~\ref{t:mainresult}. First, the weights applied to $\smash{\|\vt(t)\|_{W^{1,\infty}}}$, $\|\gamma(t)\|_\infty$, $\|y(t)\|_\infty$, $\|\gamma_\zeta(t)\|_\infty$ and $\|y_\xx(t)\|_\infty$ reflect the decay of the linear terms in the respective Duhamel formulations~\eqref{e:intvt},~\eqref{e:intgamma} and~\eqref{e:inty}, cf.~Propositions~\ref{prop:full},~\ref{prop:semexp},~\ref{prop:semdif} and~\ref{prop:semiprinrec}. The same holds for the weights applied to $\|r(t)\|_\infty$, $\|z(t)\|_\infty$, $\|r_\xx(t)\|_\infty$ and $\|(\gamma_{\xx\xx}(t),\widetilde{\gamma}(t))\|_{W^{2,\infty} \times W^{2,\infty}}$ up to a logarithmic correction to accommodate integration of a factor $(1+s)^{-1}$ or a factor $(1+t-s)^{-1}$, when estimating certain nonlinear terms in their Duhamel formulations~\eqref{e:intgamma},~\eqref{e:intz} and~\eqref{e:intr}, cf.~estimates~\eqref{e:nlest22},~\eqref{e:nlest2},~\eqref{e:nlest4},~\eqref{e:nlest5} and~\eqref{e:nlest6}.   

In contrast, the weights applied to $\smash{\|\partial_\xx^4 \vt(t)\|_\infty}$ and $\|z_{\xx\xx}(t)\|_\infty$ do not reflect the decay of the linear terms, but are motivated by the bounds on certain \emph{nonlinear} terms in the Duhamel formulations~\eqref{e:intvt} and~\eqref{e:intz}. Indeed, as $\|\vt(t)\|_\infty$ is only bounded in $t$, the nonlinearity $\smash{\widetilde{\Non}(\vt(t))}$ does not decay and the same holds for the semigroup $\re^{\El_0 t}$ by Proposition~\ref{prop:full} and Corollary~\ref{cor:der}. Thus, integrating $\smash{\re^{\El(t-s)} \widetilde{\Non}(\vt(s))}$ in~\eqref{e:intvt} yields a bound of order $\mathcal{O}(t)$ on $\smash{\|\partial_\xx^4 \vt(t)\|_\infty}$, cf.~\eqref{e:nlest15}. Similarly, since $\|v(t)\|_\infty$ decays at rate $\smash{t^{-\frac{1}{2}}}$, the quadratic nonlinearity $\mathcal Q(v(s),\gamma(s))$ decays at rate $s^{-1}$ yielding a bound of order $\mathcal{O}(\log(t))$ on $\|z_{\xx\xx}(t)\|_\infty$ upon integrating $\re^{\El_0(t-s)}\mathcal Q(v(s),\gamma(s))$ in~\eqref{e:nlesta}.

Finally, the weights applied to $\|\partial_\xx^3 \vt(t)\|_\infty$ and $\|z_\xx(t)\|_\infty$ arise by interpolation, whereas the weight applied to $\|\vt_{\xx\xx}(t)\|_{L^2}$ is directly linked to that applied to $\|z(t)\|_{W^{2,\infty}}$ through the mean value theorem, see estimate~\eqref{e:nlest12}.
\end{remark}

\section{Discussion and open problems} \label{sec:discussion}

We compare our main result, Theorem~\ref{t:mainresult}, to previous results on the nonlinear stability of wave trains in reaction-diffusion systems and formulate open questions.

\subsection{Optimality of decay rates} \label{sec:optimal}

The lack of localization leads to a loss of a factor $\smash{t^{-\frac{1}{2}}}$ in the decay rates exhibited in Theorem~\ref{t:mainresult} in comparison with earlier results~\cite{JONZ,JUN,SAN3} considering localized perturbations. Nevertheless, we argue that the temporal decay rates in~Theorem~\ref{t:mainresult} are sharp (up to possibly a logarithm). Indeed, estimate~\eqref{e:mtest3} shows that the viscous Hamilton-Jacobi equation~\eqref{e:HamJac} captures the leading-order behavior of the perturbed wave train, cf.~Remark~\ref{rem:fronts}. Moreover,~\eqref{e:HamJac} reduces to the linear convective heat equation 
\begin{align} \partial_t \breve y = d\breve{y}_{\xx\xx} + a \breve{y}_{\xx}, \label{e:conv} \end{align}
upon applying the Cole-Hopf transform. It is well-known that bounded solutions to~\eqref{e:conv} stay bounded and any spatial or temporal derivative contributes a decay factor $\smash{t^{-\frac{1}{2}}}$. This diffusive behavior is reflected by the bounds in Theorem~\ref{t:mainresult}, showing that they are sharp (up to possibly a logarithm).

As far as the author is aware, the only other work concerning the nonlinear stability of wave trains against $C_{\mathrm{ub}}$-perturbations is~\cite{HDRS22}. In the special case of the real Ginzburg-Landau equation, considered in~\cite{HDRS22}, one find that the coefficient $\nu$ in front of the nonlinearity in the viscous Hamilton-Jacobi equation~\eqref{e:HamJac} vanishes due to reflection symmetry, cf.~\cite[Section~3.1]{DSSS}. The bounds obtained in~\cite{HDRS22} are also shown to be optimal and coincide with the ones in Theorem~\ref{t:mainresult}. Here, we note that the nonlinearity in~\eqref{e:HamJac} influences the profile of bounded solutions, but does not alter the rates at which the solutions and their derivatives decay, see Remark~\ref{rem:fronts} directly below.

\begin{remark}\label{rem:fronts}
{\upshape
For any $\breve \gamma_\pm \in \R$ the viscous Hamilton-Jacobi equation~\eqref{e:HamJac} admits monotone front solutions of the form 
\begin{align*} \breve \gamma_{\mathrm f}(\xx,t) = \breve \gamma_- + \left(\breve \gamma_+ - \breve \gamma_-\right)\mathrm{erf}\left(\frac{\xx + a(1+t)}{\sqrt{d(1+t)}}\right), \qquad \mathrm{erf}(x) := \frac{1}{\sqrt{4\pi}} \int_{-\infty}^x \re^{-\frac{y^2}{4}} \de y,
\end{align*}
for $\nu = 0$ and 
\begin{align*} \breve \gamma_{\mathrm f}(\xx,t) = \breve \gamma_- + \frac{d}{\nu} \log\left(1 + \beta\,\mathrm{erf}\left(\frac{\xx + a(1+t)}{\sqrt{d(1+t)}}\right)\right),
\end{align*}
for $\nu \neq 0$, where $\beta$ is defined through $d\log(1+\beta) = \nu\left(\breve \gamma_+ - \breve \gamma_-\right)$. For each $j,l \in \mathbb N_0$, there exists a constant $C \geq 1$, which is independent of $\breve \gamma_\pm$, such that we have
\begin{align}
\frac{\max\left\{|\breve \gamma_-|,|\breve \gamma_+|\right\}}{C(1+t)^{\frac{j}{2}+l}} \leq \left\|\partial_\xx^j \left(\partial_t - a\partial_\xx\right)^l \breve \gamma_{\mathrm{f}}(t)\right\|_\infty \leq \frac{C\max\left\{|\breve \gamma_-|,|\breve \gamma_+|\right\}}{(1+t)^{\frac{j}{2}+l}}, \label{e:mtest5}
\end{align}
for $t \geq 0$. Hence, if $v_0 \in C_{\mathrm{ub}}(\R)$ in Theorem~\ref{t:mainresult} is chosen such that $\smash{\widetilde{\Phi}}_0^* v_0 = \breve\gamma_{\mathrm{f}}(0)$,  then~\eqref{e:mtest3} and~\eqref{e:mtest5} imply that $\breve \gamma = \breve \gamma_{\mathrm f}$ is indeed a leading-order approximation of the phase $\gamma$.
}\end{remark}

\subsection{Modulation of the phase} \label{sec:modnonlocal}

Various works~\cite{IYSA,JONZNL,JONZW,JUNNL,SAN3} study the nonlinear stability and asymptotic behavior of \emph{modulated} wave trains. Here, one considers solutions to~\eqref{RD} with initial conditions of the form
\begin{align} \label{e:modulated} u(x,0) = \phi_0(k_0x + \gamma_0(x)) + v_0(x),\end{align}
where the initial perturbation $v_0$ is bounded and localized, and the initial phase off-set $\gamma_0(x)$ converges to asymptotic limits $\gamma_\pm$ as $x \to \pm \infty$. As these analyses crucially rely on localization-induced decay, $\gamma_0'$ is required to be sufficiently localized (as is $v_0$). We note that, so long as $\gamma_0$ is small in $C_{\mathrm{ub}}(\R)$, initial conditions of the form~\eqref{e:modulated} can be handled by Theorem~\ref{t:mainresult}. One finds that the estimates~\eqref{e:mtest11},~\eqref{e:mtest12} and~\eqref{e:mtest2} in Theorem~\ref{t:mainresult} are the same (or even sharper) than the ones obtained in~\cite{IYSA,JONZNL,JONZW,JUNNL,SAN3}.

It is an open question to describe the asymptotic dynamics of initial data of the form~\eqref{e:modulated}, where $\gamma_0$ is large, but bounded, as in~\cite{IYSA}, and only require $\gamma_0'$ and $v_0$ to be small in $C_{\mathrm{ub}}$-norm. This is currently under investigation by the author. One of the challenges is that the current analysis relies on optimal diffusive smoothing, i.e., it requires the optimal decay on $\|\partial_x \smash{\re^{\partial_{x}^2 t}} \gamma_0\|_\infty$, which is obtained by bounding it as $t^{-\frac{1}{2}} \|\gamma_0\|_\infty$, thus exploiting that $\|\gamma_0\|_\infty$ is small. 

\subsection{Modulation of the wavenumber}

Since a phase modulation is directly coupled to a modulation of the wavenumber, cf.~\cite{DSSS}, another natural class of initial conditions for~\eqref{RD} is
\begin{align}
u(x,0) = \phi\left(k_0x + \gamma_0(x);k_0 + \gamma_0'(x)\right) + v_0(x), \label{e:initcond2}
\end{align} 
where we recall that wave trains $u_k(x,t) = \phi(kx - \omega(k) t;k)$ exist in~\eqref{RD} for an open range of wavenumbers $k$ around $k_0$, see Proposition~\ref{prop:family}. 

The treatment of nonlocalized modulations of the wavenumber is a widely open problem. Although our main result permits initial data of the form~\eqref{e:initcond2} as long as $\|\gamma_0\|_\infty$ and $\|\gamma_0'\|_\infty$ are sufficiently small, it does not apply to the case where the wavenumber off-set $\gamma_0'(x)$ converges to different values $k_\pm$ as $x \to \pm \infty$. Indeed, in such a case $\gamma_0$ cannot be bounded. For $\gamma_0'(x) \to k_\pm$ as $x \to \pm \infty$, it was shown in~\cite{DSSS} that the solution to~\eqref{RD2} with initial condition~\eqref{e:initcond2} can, in the comoving frame $\zeta = k_0x-\omega_0t$, be written as
\begin{align*}  u(\zeta,t) = \phi\big(\zeta + \breve\gamma(\zeta,t);k_0 + \breve k(\zeta,t)\big) + v(\zeta,t), \end{align*}
where the phase variable $\breve\gamma(\zeta,t)$ satisfies the viscous Hamilton-Jacobi equation~\eqref{e:HamJac} with initial condition $\gamma_0$, the wavenumber modulation $\breve k(\zeta,t) = \partial_\zeta \breve\gamma(\zeta,t)$ satisfies the associated viscous Burgers' equation~\eqref{e:Burgers}, and the residual $v(\zeta,t)$ stays small on large, but bounded, time intervals. It is an open question whether this holds for \emph{all} times $t \geq 0$. So far, this has only been resolved for the special case of the real Ginzburg-Landau equation in~\cite{BK92,GALMI} by crucially exploiting its gauge and reflection symmetries. The analysis in this paper might provide a useful new opening to this problem. Indeed, since our $L^\infty$-framework does not rely on localization properties, it might be helpful to accommodate the nonlocalized wavenumber modulation.

\subsection{Approximation of phase and wavenumber} \label{sec:approx}

Estimate~\eqref{e:mtest3} in Theorem~\ref{t:mainresult} provides a global approximation of the phase by a solution to the viscous Hamilton-Jacobi equation~\eqref{e:HamJac}. In addition, the local wavenumber, represented by the derivative of the phase, is approximated by a solution to the viscous Burgers' equation~\eqref{e:Burgers}. Thereby, previous approximation results for modulated wave trains in~\cite{DSSS}, allowing for nonlocalized initial phase and wavenumber off-sets, are extended from local to global ones. Yet, the most general approximation result in~\cite{DSSS} only requires that the solution to the Burgers' equation has initial data in some uniformly local Sobolev space. It is still open whether such general initial data can be handled globally.

We note that global approximation results have previously been obtained in~\cite{JONZW,SAN3} for the case where the local wavenumber $\gamma_\xx(t)$ and the perturbation $v(t)$ are localized. This leads to slightly sharper bounds, where the term $E_0^2$ on the right-hand side of~\eqref{e:mtest3} does not appear. It is an open question whether the bound on the right-hand side of~\eqref{e:mtest3} can be sharpened for $C_{\mathrm{ub}}$-perturbations.

\begin{remark}{\upshape
We note that under the identification $1+ \breve \gamma_\xx = \Upsilon_\xx$ and $1+\breve k = \kappa$, the viscous Hamilton-Jacobi equation~\eqref{e:HamJac} and Burgers' equation~\eqref{e:Burgers} can be regarded as quadratic approximants of the Hamilton-Jacobi equation
\begin{align} \partial_t \Upsilon = \mathsf{d}\!\left(k_0\Upsilon_\xx\right)\Upsilon_{\xx\xx} + \omega(k_0)\Upsilon_\xx - \omega\left(k_0\Upsilon_\xx\right), \label{e:HamJac2}
\end{align}
and the associated Whitham equation
\begin{align}
\partial_t \kappa = \left(\mathsf{d}\!\left(k_0\kappa\right)\kappa_\xx\right)_{\xx} + \left(\omega(k_0)\kappa - \omega\left(k_0\kappa\right)\right)_\xx, \label{e:Whitham}
\end{align}
respectively, where we denote $\mathsf{d}(k) = \smash{k^2\big\langle \widetilde{\Phi}_0,D \phi_0' + 2kD \partial_{\zeta k} \phi(\cdot;k)\big\rangle_{L^2(0,1)}}$ so that $\mathsf{d}(k_0) = d$, cf.~\eqref{e:defad}. In the case where the local wavenumber $\gamma_\xx(t)$ and the perturbation $v(t)$ are localized, the equations~\eqref{e:HamJac} and~\eqref{e:HamJac2}, as well as~\eqref{e:Burgers} and~\eqref{e:Whitham}, are asymptotically equivalent, cf.~\cite[Appendix~A]{JONZW}. We expect that, by following the approach in~\cite[Appendix~A]{JONZW} and using the Cole-Hopf transform to eliminate quadratic nonlinearities, this asymptotic equivalence can also be established in our case of $C_{\mathrm{ub}}$-initial data. Thus, the approximating solution $\breve\gamma(t)$ in Theorem~\ref{t:mainresult} could be replaced by corresponding solutions to the Hamilton-Jacobi and Whitham equations~\eqref{e:HamJac2} and~\eqref{e:Whitham}.
}\end{remark}

\subsection{Higher spatial dimensions} 

We conjecture that the analysis in this paper generalizes effortlessly to wave trains $u_0(x,t) = \phi_0(k_0x_1-\omega_0 t)$ in reaction-diffusion systems 
\begin{align} \label{e:multiD} \partial_t u = D\Delta u + f(u), \qquad x \in \R^d, \,t \geq 0,\,u(x,t) \in \R^n,\end{align}
in higher spatial dimensions $d \geq 2$, where $\Delta$ denotes the $d$-dimensional Laplacian. In contrast to previous results in the literature considering the nonlinear stability of wave trains against localized perturbations in higher spatial dimensions, cf.~\cite{RS21,JONZVD12,OHZVD3,UECR}, we do not expect that the decay rates in Theorem~\ref{t:mainresult} improve upon increasing the spatial dimension. Indeed, our analysis fully relies on diffusive smoothing, while these earlier results exploit localization-induced decay, which is stronger in higher spatial dimensions, i.e., the heat semigroup $\smash{\re^{\Delta t}}$ decays at rate $\smash{t^{-\frac{d}{2}}}$ as an operator from $L^1(\R^d)$ into $L^\infty(\R^d)$, whereas $\partial_{x_1} \re^{\Delta t}$ exhibits decay at rate $\smash{t^{-\frac{1}{2}}}$ as an operator on $L^\infty(\R^d)$. In summary, we conjecture that, if the wave-train solution $u_0$ of~\eqref{e:multiD} is diffusively spectrally stable, then it is nonlinearly stable against perturbations in $\smash{C_{\mathrm{ub}}(\R^d)}$ and the perturbed solution decays, in the co-moving frame $\zeta_1 = k_0 x_1 - \omega_0 t$, at rate $\smash{t^{-\frac12}}$ towards the modulated wave train $u(\zeta_1 + \gamma(\zeta_1,x_2,\ldots,x_d,t),t)$ as in Corollary~\ref{cor:psit}. We expect that the phase modulation $\gamma(t)$ is approximated by a solution to (a higher-dimensional version of) the scalar Hamilton-Jacobi equation.

In fact, there is a trade-off between decay and localization in higher spatial dimensions, in the sense that partly localized perturbations, which are for instance only bounded in $d_1$ spatial dimensions and $L^1$-localized in the remaining $d_2$ transverse spatial dimensions, decay pointwise at rate $\smash{t^{-\frac{d_2}{2}}}$. This principle was used in~\cite{RS21}, where nonlinear stability of wave trains in planar reaction-diffusion systems was established against perturbations, which are only bounded along a line in $\R^2$, but decay exponentially in distance from that line. 

\subsection{Regularity control and extension of the class of reaction-diffusion systems}

A standard issue in the nonlinear stability analysis of wave trains is that the equation~\eqref{e:modpertbeq} for the modulated perturbation $v(t)$ is quasilinear. Consequently, an apparent loss of derivatives has to be addressed in order to close the nonlinear argument. In this work we rely on a method developed in~\cite{RS21} to control regularity. The idea is to establish a tame estimate on a sufficiently high derivative $\partial_\zeta^m \widetilde{v}(t)$ of the unmodulated perturbation $\widetilde{v}(t)$, which satisfies the semilinear equation~\eqref{e:umodpert} in which no derivatives are lost. Upon taking the integer $m$ large enough, sufficiently strong bounds on lower derivatives of $\widetilde{v}(t)$ can be obtained by interpolation. Using the mean value theorem, these bounds can then be transferred into bounds on derivatives of $v(t)$. In this work we need $m = 4$, but, due to parabolic smoothing, it suffices to work with bounded and uniformly continuous initial data, see also Remarks~\ref{rem:linfty} and~\ref{rem:choice_eta}. Nevertheless, the method of~\cite{RS21} itself does not rely on parabolic smoothing and we expect it to apply to general semilinear problems as long as initial perturbations are regular enough (so that one can accomodate sufficiently large $m$). For instance, in~\cite{HJPR21} the method of~\cite{RS21} has been employed in the nonlinear stability analysis of wave trains against localized perturbations in the Lugiato-Lefever equation, which is nonparabolic. We also point out that in certain cases it can be advantageous to instead consider the so-called \emph{forward-modulated} perturbation
\begin{align*}
u(\zeta,t) - \phi_0(\zeta + \gamma(\zeta,t)),
\end{align*}
to control regularity in the nonlinear argument. We refer to the recent survey~\cite{ZUM23} for more details.

The above observations suggest that parabolic smoothing of the underlying equation is not essential for the nonlinear stability analysis of periodic wave trains against $C_{\mathrm{ub}}$-perturbations. Thus, we expect that our approach could in principle be extended to general dissipative semilinear problems. Naturally, this extension hinges on the assumption that the linearization about the wave train generates a $C_0$-semigroup on $C_{\mathrm{ub}}(\R)$, which obeys a spectral mapping property.

Another natural question is how far we can broaden the class of equations \emph{within} the parabolic framework. Taking the previous considerations into account, we are confident that the current analysis can be extended without much effort to the semilinear reaction-diffusion system
\begin{align*}
\partial_t u = Du_{xx} + f(u,u_x).
\end{align*}
Yet, an extension to parabolic quasilinear equations such as the reaction-diffusion system
\begin{align} \label{e:quasi}
\partial_t u = (D(u) u_x)_x + f(u,u_x),
\end{align}
where $D \colon \R^n \to \R^{n \times n}$ is smooth and $D(u)$ is strongly elliptic for each $u \in \R^n$, requires an alternative approach to control regularity in the nonlinear stability argument, because the quasilinear nature of~\eqref{e:quasi} is naturally inherited by the equations governing the (unmodulated) perturbation. Such an approach has been developed in the context of traveling shocks in viscous conservation laws by Howard and Zumbrun in~\cite[Section~11.3]{ZUH}, see also~\cite{HOW15}. Their approach, which relies on the classical parametrix method of Friedman, Ladyzhenskaya and Levi, recasts the quasilinear equation for the perturbation as a linear parabolic problem with space- and time-dependent coefficient functions. Given sufficient control on the perturbation, one can bound the coefficient functions, which yields pointwise short-time estimates on the Green's function associated with the linear problem. Spatial or temporal derivatives of the perturbation can then be bounded in $L^\infty(\R)$ by the perturbation itself through its Green's function representation as a solution to the linear problem by placing derivatives on the Green's function. We refer to~\cite{HOW15,ZUH} for further details. It is an interesting question for future research whether the results in this paper can be extended to quasilinear problems such as~\eqref{e:quasi}. 

\appendix

\section{A pointwise estimate} \label{app:aux}

In order to establish $L^\infty$-bounds on the diffusive part of the semigroup $\re^{\El_0 t}$, we use the following slight adaptation of~\cite[Lemma~A.1]{HDRS22}.

\begin{lemma} \label{lem:semigroupEstimate1}
Let $n \in \mathbb N$, $m \in \mathbb N_0$ and $\xi_0 > 0$. Let $\lambda \in C^4\big((-\xi_0,\xi_0),\C\big)$ and $\chi \in C^4\big(\R^3,\C^{n \times n}\big)$. Suppose there exists constants $C, \mu > 0$ such that it holds
\begin{itemize}
\item[i)] $\lambda'(0) \in \ri \R$;
\item[ii)] $\Re \, \lambda(\xi) \leq -\mu \xi^2$ for all $\xi \in (-\xi_0,\xi_0)$;
\item[iii)] $\overline{\mathrm{supp}(\chi(\cdot,\xx,\xt))} \subset (-\xi_0,\xi_0)$ for all $\xx,\xt \in \R$;
\item[iv)] $\displaystyle\sup_{(\xx,\xt) \in \R^2} \left\|\chi(\cdot,\xx,\xt)\right\|_{W^{4,\infty}} \leq C$.
\end{itemize}
Let $a \in \R$ be such that $\lambda'(0) = a\ri$. Then, the pointwise estimate
\begin{align*}
\left|\int_\R \re^{t \lambda(\xi)} \xi^m \chi(\xi,\xx,\xt) \re^{\ri \xi(\xx-\xt)} \de \xi \right| &\lesssim t^{-\frac{m+1}{2}}\left(1 + \frac{(\xx-\xt + a t)^4}{t^2}\right)^{-1},
\end{align*}
is satisfied for each $t \geq 1$ and $\xx,\xt \in \R$. 
\end{lemma}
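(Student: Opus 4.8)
The statement is an oscillatory-integral estimate of a very classical flavour, and the plan is to reduce it to a stationary-phase/Gaussian decay argument by repeated integration by parts. First I would write the phase as $t\lambda(\xi) + \ri\xi(\xx-\xt)$ and introduce the shifted variable $\sigma = \xx - \xt + at$, so that the linear-in-$\xi$ part of $t\lambda(\xi)$ (namely $\ri a\xi t$, using hypothesis (i)) combines with $\ri\xi(\xx-\xt)$ to give $\ri\xi\sigma$. Writing $\lambda(\xi) = \ri a\xi + \mu_2(\xi)$ with $\Re\,\mu_2(\xi) \le -\mu\xi^2$ by (ii), the integral becomes $\int_\R \re^{\ri\xi\sigma}\re^{t\mu_2(\xi)}\xi^m\chi(\xi,\xx,\xt)\,\de\xi$. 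The substitution $\xi = \eta/\sqrt{t}$ immediately extracts the prefactor: the measure contributes $t^{-1/2}$, the factor $\xi^m$ contributes $t^{-m/2}$, and the Gaussian-type weight becomes $\re^{t\mu_2(\eta/\sqrt t)}$, which is bounded by $\re^{-\mu\eta^2}$ for $\eta$ in the rescaled support (valid since $t\ge 1$ keeps $|\eta/\sqrt t| < \xi_0$ exactly on $\mathrm{supp}\,\chi$ by (iii)). This already yields the bound $\lesssim t^{-(m+1)/2}$, so the only remaining point is the decay in $\sigma$.

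To get the factor $(1 + \sigma^4/t^2)^{-1}$, the standard trick is integration by parts in $\xi$: each application of $\partial_\xi$ to $\re^{\ri\xi\sigma}$ produces a factor $\ri\sigma$, and moving the derivative onto the rest of the integrand costs at most a controlled amount. Concretely, I would use the identity $\re^{\ri\xi\sigma} = (\ri\sigma)^{-1}\partial_\xi \re^{\ri\xi\sigma}$ and integrate by parts up to four times (the number $4$ being exactly why hypothesis (iv) demands $W^{4,\infty}$-control on $\chi$ and why $\lambda$ is assumed $C^4$). Since $\chi$ is compactly supported in $\xi$ by (iii), no boundary terms appear. Each integration by parts either hits $\chi$ (bounded derivatives by (iv)), hits $\xi^m$ (lowering the power, harmless), or hits $\re^{t\mu_2(\xi)}$, producing a factor $t\mu_2'(\xi)$ or $t\mu_2''(\xi)$ etc.; after the rescaling $\xi = \eta/\sqrt t$ one checks that each such factor, combined with the Gaussian, contributes a bounded quantity — e.g. $t\mu_2'(\eta/\sqrt t) \sim \sqrt t\,\eta$ times a bounded function, and $\sqrt t\,\eta\,\re^{-\mu\eta^2}$ is integrable uniformly in $t\ge 1$. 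The net effect of four integrations by parts is a gain of $|\sigma|^{-4}$ at the price of a fixed power of $t$, namely $t^2$ (from the four factors that can each be as large as $\sqrt t\,\eta$, or more carefully, balancing the worst case). Combining the trivial bound with the $|\sigma|^{-4}$-bound via $\min\{1, t^2/\sigma^4\} \lesssim (1+\sigma^4/t^2)^{-1}$ gives the claimed estimate.

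The cleanest way to organize the bookkeeping is probably to cite \cite[Lemma~A.1]{HDRS22} and explain that the present statement is obtained by allowing the amplitude $\chi$ to depend on the two extra parameters $\xx,\xt$ in addition to $\xi$ — which changes nothing in the proof, since all the estimates on $\chi$ are taken uniformly over $(\xx,\xt)\in\R^2$ by hypotheses (iii)–(iv), and $\xx,\xt$ enter the integrand only through $\chi$ and through the harmless modulation $\re^{\ri\xi(\xx-\xt)}$. I would then either reproduce the short integration-by-parts computation or simply note that it is verbatim that of \cite[Lemma~A.1]{HDRS22} with $\chi(\xi)$ replaced by $\chi(\xi,\xx,\xt)$ throughout.

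The main obstacle is not conceptual but combinatorial: after four integrations by parts one generates many terms (by the general Leibniz rule applied to the product $\xi^m\chi(\xi,\xx,\xt)\re^{t\mu_2(\xi)}$), and one must verify that \emph{every} such term, after the rescaling $\xi=\eta/\sqrt t$, is bounded by $C t^{-(m+1)/2} t^{-2}|\sigma|^{-4}$ uniformly in $t\ge 1$ and in $(\xx,\xt)$. The delicate terms are those where all four derivatives fall on $\re^{t\mu_2(\xi)}$, giving a factor like $(t\mu_2'(\xi))^4 + \dots$; in the rescaled variable this is $O(t^2\eta^4)$ against a Gaussian, hence bounded, and it is precisely this worst case that fixes the power $t^2$ appearing in the final bound. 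Keeping track of which combination produces the critical power, while all other combinations are better, is the only real work, and it is entirely routine once the rescaling is in place.
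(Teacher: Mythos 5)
Your proposal is correct and follows essentially the same route as the paper: a direct Gaussian bound giving $t^{-(m+1)/2}$, combined with a fourfold integration by parts against $\re^{\ri\xi(\xx-\xt+at)}$ after subtracting the linear part $\lambda'(0)\xi$ of the phase (so that derivatives of the exponential scale like $t|\xi|$), the two bounds being merged via $\min\{1,t^2/\sigma^4\}\lesssim(1+\sigma^4/t^2)^{-1}$, which is just a repackaging of the paper's multiply-and-divide identity. Your observation that the statement is the $(\xx,\xt)$-uniform version of \cite[Lemma~A.1]{HDRS22} also matches how the paper itself frames the lemma.
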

\begin{proof} 
We rewrite
\begin{align}
\begin{split}
\int_\R \re^{t \lambda(\xi)} \xi^m \chi(\xi,\xx,\xt) \re^{\ri \xi (\xx - \xt)} \de \xi
&= \left(1+\frac{(\xx-\xt+ a t)^4}{t^2}\right)^{-1} \left(\int_\R \re^{t \lambda(\xi)} \xi^m \chi(\xi,\xx,\xt) \re^{\ri \xi(\xx - \xt)} \de \xi\right.\\
&\quad \left. + \, \int_\R \re^{t \left(\lambda(\xi) - \lambda'(0) \xi\right)} \xi^m \chi(\xi,\xx,\xt) \frac{(\xx-\xt+a t)^4}{t^2} \re^{\ri \xi(\xx - \xt + a t)} \de \xi\right).
\end{split} \label{eq:split3}
\end{align}
for $t \geq 1$ and $\xx,\xt \in \R$. To bound the first integral in~\eqref{eq:split3} we use ii), iii) and iv) and obtain
\begin{align*}
\left|\int_\R \re^{t \lambda(\xi)} \xi^m \chi(\xi,\xx,\xt) \re^{\ri \xi(\xx - \xt)} \de \xi\right| \lesssim \int_{-\xi_0}^{\xi_0} \left|\xi^m \re^{t \lambda(\xi)}\right| \de \xi \lesssim \int_\R |\xi|^m \re^{-\mu \xi^2 t} \de \xi \lesssim t^{-\frac{m+1}{2}},
\end{align*}
for $t \geq 1$ and $\xx,\xt \in \R$. To bound the second integral in~\eqref{eq:split3}, we exploit $(\xx-\xt + at)^4 \re^{\ri \xi(\xx-\xt+at)} = \partial_\xi^4 \re^{\ri \xi(\xx-\xt + at)}$. Thus, using integration by parts we arrive at
\begin{align*}
\begin{split}
&\int_\R \re^{t \left(\lambda(\xi)-\lambda'(0)\xi\right)} \xi^m \chi(\xi,\xx,\xt) (\xx-\xt+at)^4 \re^{\ri \xi(\xx - \xt+at)} \de \xi\\ 
&\qquad = \int_\R \partial_\xi^4 \left(\re^{t \left(\lambda(\xi)-\lambda'(0)\xi\right)} \xi^m \chi(\xi,\xx,\xt)\right) \re^{\ri \xi(\xx - \xt+at)} \de \xi, 
\end{split}
\end{align*}
for $t \geq 1$ and $\xx,\xt \in \R$. We observe that by hypotheses i), ii) and iv) and the fact that $\lambda \in C^4\big((-\xi_0,\xi_0),\C\big)$ it holds 
\begin{align*}
\begin{split}
\left|\partial_\xi \left(\re^{t (\lambda(\xi)-\lambda'(0)\xi)} \chi(\xi,\xx,\xt)\right)\right| &\lesssim \left(1+t|\xi|\right)\re^{-\mu \xi^2 t},\\
\left|\partial_\xi^2 \left(\re^{t (\lambda(\xi)-\lambda'(0)\xi)} \chi(\xi,\xx,\xt)\right)\right| &\lesssim \left(t + \left(1 + t|\xi|\right)^2\right)\re^{-\mu \xi^2 t},\\
\left|\partial_\xi^3 \left(\re^{t (\lambda(\xi)-\lambda'(0)\xi)} \chi(\xi,\xx,\xt)\right)\right| &\lesssim \left(t + t^2|\xi| + \left(1 + t|\xi|\right)^3\right)\re^{-\mu \xi^2 t}\\
\left|\partial_\xi^4 \left(\re^{t (\lambda(\xi)-\lambda'(0)\xi)} \chi(\xi,\xx,\xt)\right)\right| &\lesssim \left(t + t^2 + t^2|\xi| + t^3|\xi|^2 +  \left(1 + t|\xi|\right)^4\right)\re^{-\mu \xi^2 t}
\end{split}
\end{align*}
for $\xi \in (-\xi_0,\xi_0)$, $t \geq 1$ and $\xx,\xt \in \R$. Therefore, using hypotheses ii), iii) and iv), we arrive at
\begin{align*}
\frac{1}{t^2} &\left|\int_\R \partial_\xi^4 \left(\re^{t \left(\lambda(\xi)-\lambda'(0)\xi\right)} \xi^m \chi(\xi,\xx,\xt)\right) \re^{\ri \xi(\xx - \xt+at)} \de \xi\right| \\
&\lesssim \int_\R \re^{-\mu \xi^2 t} \left(|\xi|^m \left(t^{-1} + 1 + |\xi| + t |\xi|^2 + \left(t^{-\frac{1}{2}} + t^{\frac{1}{2}}|\xi|\right)^4\right) \right.\\ 
&\left. \phantom{\left(t^{\frac{1}{2}}\right)^4} + m|\xi|^{m-1} \left(t^{-1} + |\xi| + t^{-\frac{1}{2}}\left(t^{-\frac{1}{2}} + t^{\frac{1}{2}}|\xi|\right)^3\right) + m(m-1)|\xi|^{m-2} \left(t^{-1} + \left(t^{-1} + |\xi|\right)^2\right)\right.\\
&\left. \phantom{\left(t^{\frac{1}{2}}\right)^4} + m(m-1)(m-2) |\xi|^{m-3}\left(t^{-2}+t^{-1}|\xi|\right) + m(m-1)(m-2)(m-3)|\xi|^{m-4}t^{-2}\right) \de \xi\\ 
&\lesssim t^{-\frac{m+1}{2}},
\end{align*}
for $t \geq 1$ and $\xx,\xt \in \R$, which completes the proof of the result.
\end{proof} 

\section{Local existence of the phase modulation} \label{app:B}

We prove the existence of a maximal solution to the integral equation~\eqref{e:intgamma} using a standard contraction mapping argument, cf.~\cite[Section~6]{Pazy} and~\cite[Section~4.3]{CA98}. 

\begin{proof}[Proof of Proposition~\ref{p:gamma}]
First, we note that $\gamma \colon [0,t_0] \to C_{\mathrm{ub}}(\R)$ given by $\gamma(t) = 0$ is a solution of~\eqref{e:intgamma} for each $t_0 \in (0,1]$ with $t_0 < T_{\max}$, because the propagator $S_p^0(t)$ vanishes on $[0,1]$. Next, take $t_0, \delta, \eta > 0$ such that $t_0 + \delta < T_{\max}$ and $\eta < \frac12$. Let $\check{\gamma} \in C\big([0,t_0],C_{\mathrm{ub}}^2(\R)\big) \cap C^1\big([0,t_0],C_{\mathrm{ub}}(\R)\big)$ be a solution to~\eqref{e:intgamma} with $\|(\check\gamma(t), \partial_t \check\gamma(t))\|_{W^{2,\infty} \times L^\infty} \leq \frac12 - \eta$ for all $t \in [0,t_0]$. We show that $\check{\gamma}$ can be extended to a solution $\gamma_{\mathrm{ext}} \in C\big([0,t_0+\delta],C_{\mathrm{ub}}^2(\R)\big) \cap C^1\big([0,t_0+\delta],C_{\mathrm{ub}}(\R)\big)$ to~\eqref{e:intgamma}, provided $\delta > 0$ is sufficiently small.

Since $S_p^0(t)$ vanishes on $[0,1]$, it follows from~\eqref{e:intgamma} that $\check{\gamma}(t) = 0$ for all $t \in [0,t_0]$ with $t \leq 1$. Let $B$ be the ball centered at the origin of radius $\frac12$ in $C_{\mathrm{ub}}^2(\R) \times C_{\mathrm{ub}}(\R)$. By Proposition~\ref{p:local_unmod} and the mean-value theorem the map $V \colon C_{\mathrm{ub}}(\R) \times [t_0,t_0+\delta] \to C_{\mathrm{ub}}(\R)$ given by
\begin{align*}V(\gamma,t)[\xx] &= \vt(\xx-\gamma(\xx),t) + \phi_0(\xx-\gamma(\xx)) - \phi_0(\xx),\end{align*}
is continuous in $t$ and satisfies the Lipschitz estimate
\begin{align*}
\left\|V(\gamma,t) - V(\tilde{\gamma},t)\right\|_\infty \leq \left(\|\vt_\xx(t)\|_\infty + \|\phi_0'\|_\infty\right) \|\gamma - \tilde{\gamma}\|_\infty
\end{align*}
for $\gamma,\tilde\gamma \in C_{\mathrm{ub}}(\R)$ and $t \in [t_0,t_0+\delta]$ with $t \geq 1$. Hence, the nonlinear maps $B \times [t_0,t_0+\delta] \to C_{\mathrm{ub}}(\R), \, (\gamma,\gamma_t,t) \mapsto \mathcal{Q}(V(\gamma,t),\gamma), \mathcal{R}(V(\gamma,t),\gamma,\gamma_t), \mathcal{S}(V(\gamma,t),\gamma)$ are bounded, continuous in $t$ and obey\begin{align*}
\left\|\mathcal{Q}(V(\gamma,t),\gamma) - \mathcal{Q}(V(\tilde\gamma,t),\tilde\gamma)\right\|_\infty &\lesssim \left\|\gamma - \tilde{\gamma}\right\|_{W^{1,\infty}},\\
\left\|\mathcal{R}(V(\gamma,t),\gamma,\gamma_t) - \mathcal{R}(V(\tilde\gamma,t),\tilde\gamma,\tilde\gamma_t)\right\|_\infty &\lesssim \left\|\gamma - \tilde{\gamma}\right\|_{W^{2,\infty}} + \left\|\gamma_t - \tilde{\gamma}_t\right\|_{\infty},\\
\left\|\mathcal{S}(V(\gamma,t),\gamma) - \mathcal{S}(V(\tilde\gamma,t),\tilde\gamma)\right\|_\infty &\lesssim \left\|\gamma - \tilde{\gamma}\right\|_{W^{1,\infty}},
\end{align*}
for $(\gamma,\gamma_t), (\tilde\gamma,\tilde\gamma_t) \in B$ and $t \in [t_0,t_0 + \delta]$ with $t \geq 1$. On the other hand, Proposition~\ref{prop:semdif} yields that the propagators $\smash{\partial_t^\ell S_p^0(t) \partial_\xx^i} \colon C_{\mathrm{ub}}(\R) \to C_{\mathrm{ub}}^l(\R)$ are $t$-uniformly bounded, strongly continuous on $[0,\infty)$ and vanish identically on $[0,1]$ for any $i,l,\ell \in \mathbb N_0$. A standard contraction mapping argument, cf.~\cite[Theorem~6.1.2]{Pazy}, on a closed ball $\mathcal{B}$ of radius $\tfrac12(1 - \eta)$ centered at the origin in the Banach space $\{(\gamma,\gamma_t) \in C\big([t_0,t_0 + \delta],C_{\mathrm{ub}}^2(\R) \times C_{\mathrm{ub}}(\R)\big) : (\gamma,\gamma_t)(s) = 0 \text{ for all } s \in [0,1]\}$ now yields a solution $(\gamma,\gamma_t) \in \mathcal{B}$ to the integral system
\begin{align*}
\gamma(t) &= S_p^0(t)v_0 + \int_0^{t_0} S_p^0(t-s) \mathcal{N}\big(V(\check\gamma(s),s),\check\gamma(s),\partial_s \check\gamma(s)\big) \de s\\ 
&\qquad + \, \int_{t_0}^t S_p^0(t-s) \mathcal{N}\big(V(\gamma(s),s),\gamma(s),\gamma_t(s)\big) \de s,\\
\gamma_t(t) &= \partial_t S_p^0(t)v_0 + \int_0^{t_0} \partial_t S_p^0(t-s) \mathcal{N}\big(V(\check\gamma(s),s),\check\gamma(s),\partial_s \check\gamma(s)\big) \de s\\
&\qquad + \, \int_{t_0}^t \partial_t S_p^0(t-s) \mathcal{N}\big(V(\gamma(s),s),\gamma(s),\gamma_t(s)\big) \de s,
\end{align*}
provided $\delta > 0$ is sufficiently small. By construction it holds $\gamma \in C^1\big([t_0,t_0+\delta],C_{\mathrm{ub}}(\R)\big)$ with $\partial_t \gamma(t) = \gamma_t(t)$ for all $t \in [t_0,t_0+\delta]$. It follows that
\begin{align*}
\gamma_{\mathrm{ext}}(t) = \begin{cases} \check{\gamma}(t), & t \in [0,t_0), \\ \gamma(t), & t \in [t_0,t_0+\delta],\end{cases}
\end{align*}
is a solution to~\eqref{e:intgamma} lying in $C\big([0,t_0+\delta],C_{\mathrm{ub}}^2(\R)\big) \cap C^1\big([0,t_0+\delta],C_{\mathrm{ub}}(\R)\big)$, which extends $\check{\gamma}$. Moreover, we have the estimate $\|(\gamma_{\mathrm{ext}}(t), \partial_t \gamma_{\mathrm{ext}}(t))\|_{W^{2,\infty} \times L^\infty} \leq \frac12(1 - \eta)$ for all $t \in [0,t_0+\delta]$. 

As argued in~\cite[Theorem~4.3.4]{CA98} and~\cite[Theorem~6.1.4]{Pazy}, this extension procedure yields the existence of the desired maximal solution, whose regularity properties follow by the fact that the propagators $\smash{\partial_t^\ell S_p^0(t) \partial_\xx^i} \colon C_{\mathrm{ub}}(\R) \to C_{\mathrm{ub}}^l(\R)$ are $t$-uniformly bounded for any $i,l,\ell \in \mathbb N_0$.
\end{proof}

\bibliographystyle{abbrv}
\bibliography{mybib}

\end{document}